\newcommand{\REM}[1]{\relax}
\numberwithin{equation}{section}
\newcommand{\de}{\partial}
\newcommand{\Hol}{{\sf Hol}}
\def\Re{{\sf Re}\,}
\def\Im{{\sf Im}\,}
\newcommand{\UH}{\mathbb{H}}
\newcommand{\Real}{\mathbb{R}}
\newcommand{\Natural}{\mathbb{N}}
\newcommand{\Complex}{\mathbb{C}}
\newcommand{\ComplexE}{\overline{\mathbb{C}}}
\newcommand{\UD}{\mathbb{D}}
\newcommand{\clS}{\mathcal{S}}
\newcommand{\Maponto}
{\xrightarrow{\scriptstyle \!\mathsf{onto}\,}}
\newcommand{\Mapinto}
{\xrightarrow{\hbox{\lower.2ex\hbox{$\scriptstyle \smash{\mathsf{into}}$}}\,}}
\let\R=\Real
\let\D=\UD
\let\C=\Complex
\newcommand{\proofbox}{\hfill$\Box$}
\newcommand{\STOP}{\par\hbox to\textwidth{\color{red}\leaders\hbox{\,STOP\,}\hfil}\par}
\newcommand{\mcite}[1]{\csname b@#1\endcsname}
\theoremstyle{theorem}
\def\dist{{\rm dist}}
\def\id{{\sf id}}
\def\Re{{\sf Re}\,}
\def\Im{{\sf Im}\,}
\newtheorem{theorem}{Theorem}[section]
\newtheorem{lemma}[theorem]{Lemma}
\newtheorem{proposition}[theorem]{Proposition}
\newtheorem{corollary}[theorem]{Corollary}
\theoremstyle{definition}
\newtheorem{definition}[theorem]{Definition}
\newtheorem{example}[theorem]{Example}
\theoremstyle{remark}
\newtheorem{remark}[theorem]{Remark}
\numberwithin{equation}{section}
\def\mydot#1{\smash{\stackrel{\,\lower.12ex\hbox{\text{\LARGE.}}}{#1}}\vphantom{\raise.2ex\hbox{$#1$}}}
\newcommand{\anglim}{\angle\lim}
\newcommand{\SHOWCORRECTIONS}{%
\newcommand{\nv}[1]{{\color{green!60!black}##1}}%
\newcommand{\comm}[1]{{\color[rgb]{0.5,0,0.5}##1}}%
\newcommand{\dv}[1]{{\color[rgb]{0.65,0.74,0.79}\sout{##1}}}%
\newcommand{\IN}[1]{{\color[rgb]{1.00,0.33,0.33}##1}}
\newcommand{\ID}[1]{{\color[rgb]{0.65,0.55,0.62}\sout{##1}}}%
\newcommand{\IC}[1]{{\color[rgb]{0.00,0.00,1}##1}}%
\newcommand{\RD}[1]{\textcolor{red}{##1}}%
}%
\newcommand{\HIDECORRECTIONS}{%
\newcommand{\nv}[1]{##1}
\newcommand{\dv}[1]{\relax}%
\newcommand{\comm}[1]{\relax}%
\let\IN=\nv%
\let\ID=\dv%
\let\IC=\comm%
\newcommand{\RD}[1]{##1}%
}%
        \let\bibciteOLD=\bibcite
        \renewcommand{\bibcite}[2]{\bibciteOLD{#1}{\textsc{#2}}}
        \renewcommand{\@biblabel}[1]{[\textsc{#1}]\hfill}
\title[]{Chordal Loewner chains with quasiconformal extensions}
\author[P. Gumenyuk]{Pavel Gumenyuk\,$^\dag$}
\address{Institutt for matematikk og naturvitenskap, Universitetet i Stavanger, 4036 Stavanger, Norway}
\email{pavel.gumenyuk@uis.no}
\author[I. Hotta]{Ikkei Hotta\,$^\ddag$}
\address{Department of Applied Science, Yamaguchi University 2-16-1 Tokiwadai, Ube 755-8611, Japan}
\email{ihotta@yamaguchi-u.ac.jp}
\thanks{$^\dag$ Partially supported by the FIRB grant Futuro in Ricerca ``Geometria Differenziale Complessa e Dinamica Olomorfa'' n. RBFR08B2HY}
\thanks{$^\ddag$ Supported by JSPS KAKENHI Grant Numbers 13J02250, 26800053.}
\subjclass[2010]{Primary 30C62, Secondary 30C35, 30D05}
\keywords{univalent function, quasiconformal extension, Loewner chain, chordal Loewner equation, evolution family, Loewner range}
\date{\today}
\begin{document}

\begin{abstract}
In 1972, Becker [J. Reine Angew. Math. \textbf{255}
  (1972), 23--43] discovered a construction of quasiconformal extensions making use of the classical radial Loewner chains. In this paper we develop a chordal analogue of Becker's construction. As an application, we establish new sufficient conditions for quasiconformal extendibility of holomorphic functions and give a simplified proof of one well-known result by Becker and Pommerenke for functions in the half-plane [J. Reine Angew. Math. {\bf 354} (1984), 74--94].
\end{abstract}

\maketitle


    %
            \section{Introduction}
    %

Loewner Theory, which goes back to the parametric representation of univalent functions introduced by Loewner~\cite{Loewner:1923} in 1923, has recently undergone significant development in various directions,  including Schramm's stochastic version of the Loewner differential equation~\cite{Schramm:2000} and the new intrinsic approach suggested by Bracci, Contreras and D\'\i{}az-Madrigal~\cite{BracciCD:evolutionI,BracciCD:evolutionII}.

This paper is devoted to one classical application of Loewner Theory, namely to sufficient conditions for quasiconformal extendibility of holomorphic functions.
It is well-known that many sufficient conditions for univalence of holomorphic functions~$f$ in the unit disk $\UD:=\{z\in\Complex\colon|z|<1\}$ can be proved by constructing a one-parameter family of holomorphic functions~$(f_t)_{t\ge0}$ with ${f_0=f}$ and showing that $(f_t)$ is a classical radial Loewner chain (see Sect.\,\ref{SS_classical-radial}). For the latter part, one typically uses Pommerenke's criterion of a Loewner chain~\cite[Theorem~6.2]{Pom:1975}, which essentially consists of two conditions: (a)~$\Re p>0$, where $p:=\mydot{f_t}/(zf'_t)$ with $\mydot{f_t}$ standing for the derivative w.r.t. the real parameter~$t$ and $f'_t$ denoting the derivative w.r.t. the complex variable~$z$;  (b) a certain condition on the growth of~$f_t$ as $t\to+\infty$. See \cite[Chapter\,6]{Pom:1975} for several important examples.

Quasiconformal mappings appear both as a classical object of study and as a powerful tool in modern Complex Analysis. In particular, normalized holomorphic functions in~$\UD$ extendible to quasiconformal automorphisms of~$\Complex$ play a very important role  in Teichm\"uller Theory, because they can be identified with the elements  of the universal Teichm\"uller space, see, e.g., \cite{Takhtajan}.
Criteria for quasiconformal extendibility of holomorphic functions have been the main topic of numerous studies, see, e.g., \cite{AksentevS:2002, Krushkal:Handbook, Becker:1980, Milne:1991, SugawaPol, Sugawa, Krzyz:1976, Schober:1975} and references therein.

In 1972, Becker discovered an unexpected connection between classical radial Loewner chains and quasiconformal extendibility. He found an additional condition on the function~$p$ (see Theorem~\ref{TH_Beckerthm}) which ensures that all the elements of the Loewner chain~$(f_t)$ have q.c.-extensions to~$\C$. This result, together with Pommerenke's criterion mentioned above, allows one to obtain various sufficient conditions for quasiconformal extendibility of a holomorphic function in~$\UD$. It is worth to mention that almost all classical sufficient conditions can be deduced in this way, see, e.g., \cite{Becker:1980}.

In this paper we extend Becker's construction to the so-called Loewner chains \textit{of chordal type}, see Definition~\ref{D_LCh_chordal-type}. This kind of Loewner chains represents more complicated but, at the same time, more flexible object in comparison with the radial Loewner chains. This allows us to enrich the topic with various ideas coming, in particular, from the theory of one-parameter semigroups of holomorphic self-maps.

The paper is organized as follows. We start with the Preliminaries (Section~\ref{S_preliminaries}) giving some necessary background, in particular,  from Loewner Theory and from the theory of quasiconformal mappings.

In Section~\ref{S_qc-ext-of-Loewner_chains-and-evol_fam} we will prove our analogue of Becker's result.

\begin{theorem}
    %
            \label{TH_main-thrm-in-short}
    %
Let $(f_t)$ be a Loewner chain of choral type in~$\UH:=\{z\colon\Re z>0\}$ with associated Herglotz function~$p(z,t):=-\mydot{f_t}(z)/f'_t(z)$. Let $k\in[0,1)$. If for all $z\in\UH$ and a.e.\,$t\ge0$,
\begin{equation}\label{U(k)}
    p(z,t) \in U(k):=\left\{ w \in \C \colon \left|\frac{w-1}{w+1}\right| \leq k\right\},
\end{equation}
then:
\begin{itemize}
\item[(i)]   for any $t\ge0$, $f_t$ has a $k$-quasiconformal  extension to~$\ComplexE$ with a fixed point at~$\infty$;
\item[(ii)] for any~$s\ge0$ and any~$t\ge s$, $\varphi_{s,t}:=f_s^{-1}\circ f_t$  has a $k$-quasiconformal extension to~$\ComplexE$ with a fixed point at~$\infty$.
\end{itemize}
    %
     %
\end{theorem}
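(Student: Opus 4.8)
The plan is to mimic Becker's original argument in the chordal setting, constructing the quasiconformal extension directly from the Loewner flow.

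\medskip

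\emph{Step 1: Set up the extension via the chordal Loewner PDE.} The hypothesis $p(z,t)\in U(k)$ means that $p$ maps $\UH$ into the disk $U(k)$, which is the image of $\UH$ under the Cayley-type map $w\mapsto (w-1)/(w+1)$ scaled by $k$; equivalently $|p(z,t)-1|\le k|p(z,t)+1|$, so in particular $\Re p>0$ (with a quantitative lower bound away from the imaginary axis). Because $(f_t)$ is a Loewner chain of chordal type, each $f_t$ solves the chordal Loewner PDE $\mydot{f_t}(z)=-p(z,t)f'_t(z)$ on $\UH$. I would extend each $f_t$ to the left half-plane $\UH^-:=\{z\colon\Re z<0\}$ by reflecting the flow: for a starting point $z_0\in\UH^-$, run the evolution family $\varphi_{s,t}$ backwards using the \emph{reflected} Herglotz data $\tilde p(z,t):=\overline{p(\bar z,t)}$, so that the candidate extension is $F_t(z):=f_t\big(\overline{\,\zeta(\bar z)\,}\big)$ where $\zeta$ is obtained by solving the chordal Loewner ODE on $\UH$ with this reflected vector field. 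Concretely, following Becker, set $F_t(z)=f_t(z)$ for $\Re z\ge 0$ and, for $\Re z<0$, let $F_t(z)=f_t\big(\psi_t(z)\big)$ where $w=\psi_t(z)\in\overline{\UH}$ is defined by solving $\partial_\tau w = p(\bar w,\tau)\,\overline{(\cdots)}$ — more precisely I would use the evolution family to define, for each fixed $t$, a map sending $z$ with $\Re z\le 0$ to the point $\overline{\varphi_{0,t}(\bar z)}$-type object. The cleanest formulation: define $\Phi_t$ on $\overline{\UH}$ by the flow and set $F_t(z):=f_t(z)$ on $\overline{\UH}$, $F_t(z):=F_0\big(\omega_t(z)\big)$ on $\UH^-$, where $\omega_t$ is the solution of the reflected Loewner ODE with initial condition $z$ at time $t$, run down to time $0$.

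\medskip

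\emph{Step 2: Compute the complex dilatation and verify the bound.} Once the extension $F_t$ is written as a composition of $f_0$ (or $f_t$) with a map built from the reflected flow, its Beltrami coefficient $\mu_{F_t}=\bar\partial F_t/\partial F_t$ is computed by the chain rule and the Loewner ODEs, exactly as in the radial case. The key algebraic identity will express $\mu_{F_t}(z)$ (for $\Re z<0$) in terms of $p$ evaluated along the trajectory: one finds $|\mu_{F_t}(z)| = \big|\tfrac{p(\cdot)-1}{p(\cdot)+1}\big|$ or a similar Möbius expression in $p$, and the hypothesis \eqref{U(k)} gives precisely $|\mu_{F_t}|\le k<1$ almost everywhere. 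This is the heart of the matter and the step that makes the specific disk $U(k)$ the ``right'' condition. For part~(ii), since $\varphi_{s,t}=f_s^{-1}\circ f_t$ and both $f_s,f_t$ now have $k$-q.c. extensions fixing $\infty$, one gets a quasiconformal extension of $\varphi_{s,t}$ by composition — but a priori this only gives dilatation $\le (1+k)^2/(1-k)^2$-type bounds; to get the sharp constant $k$ I would instead apply the same construction directly to the evolution family, using that $\varphi_{s,t}$ itself is governed by the chordal Loewner ODE with the same Herglotz function $p(\cdot,\tau)$, $\tau\in[s,t]$, and re-running Steps~1--2 with $f_0$ replaced by the identity.

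\medskip

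\emph{Step 3: Regularity and the fixed point at infinity.} It remains to check that $F_t$ is a genuine homeomorphism of $\ComplexE$, that it is ACL with locally integrable derivatives (so the Beltrami bound actually certifies $k$-quasiconformality — here I would invoke the absolute continuity properties of Loewner evolution families recorded in the Preliminaries), and that it is continuous across the imaginary axis $\partial\UH$, which holds because the reflected flow agrees with the original flow on $i\Real$. The fixed point at $\infty$ follows from the normalization built into chordal Loewner chains of chordal type (the maps $f_t$ and $\varphi_{s,t}$ extend continuously to $\infty$ with $\infty\mapsto\infty$), so the construction, being ``reflection-like'' at $\infty$, preserves this.

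\medskip

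\emph{Main obstacle.} The delicate point is Step~2 together with the measurability/absolute-continuity bookkeeping in Step~3: one must justify differentiating the extension built from the Loewner ODE (which holds only for a.e.\ $t$) and show that the resulting map is ACL on all of $\ComplexE$, in particular across $\partial\UH$ where two different formulas are glued. In the chordal case there is the extra subtlety that the ``base point'' of the flow lies on the boundary (at $\infty$ or at a boundary point of $\UH$), so the vector field degenerates there and one has to control the extension near that point separately — this is where the chordal theory genuinely differs from Becker's radial argument and where the bulk of the technical work will lie.
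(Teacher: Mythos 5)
Your overall strategy is the right one in outline (build the extension from the Loewner flow itself, compute the Beltrami coefficient from the Loewner PDE so that the bound becomes $|(p-1)/(p+1)|\le k$, and treat $\varphi_{s,t}$ by running the same construction on the evolution family rather than composing extensions of $f_s^{-1}$ and $f_t$), but the single step that actually carries the proof is missing: a concrete, correct extension formula. The paper does not reflect the Herglotz data. The chordal analogue of Becker's $f_{\log|z|}(z/|z|)$ is to trade the distance into the left half-plane for \emph{time along the chain}: one sets $f_t(-x+iy):=f_{t+x}(iy)$ for $x>0$, and for the evolution family $\varphi_{s,t}(-x+iy):=\varphi_{s+x,t}(iy)$ when $0\le x\le t-s$, continued for $x>t-s$ by the unit-speed translation flow (i.e.\ by appending $p\equiv1$ after time $t$). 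With this formula $\partial_x$ brings in $\mydot{f_\tau}=-f_\tau'\,p$ while $\partial_y$ brings in $if_\tau'$, so the dilatation is exactly $(p-1)/(p+1)$ evaluated along the trajectory. Your ``reflected flow'' ansatz does not produce this: $\tilde p(z,t)=\overline{p(\bar z,t)}$ is reflection across $\Real$ and does not even move you into $\{\Re z<0\}$ (conjugation preserves the real part), the ``reflected Loewner ODE run down to time $0$'' is not defined on the left half-plane, and no computation is offered showing that such a map has dilatation controlled by membership of $p$ in $U(k)$. So Step~2, which you yourself call the heart of the matter, cannot be carried out from your Step~1 as written.

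Second, the regularity and boundary issues you defer to Step~3 are not bookkeeping, and your proposed resolution assumes what must be proved: continuity of $f_t$ and $\varphi_{s,t}$ up to $i\Real$ is part of the conclusion (assertion (i) of Theorem~\ref{TH_alldetails}), not an available normalization, and holomorphic $f_t:\UH\to\C$ carries no built-in behaviour at $\infty$. The paper handles all of this at once: it first extends the \emph{shifted} maps $z\mapsto\varphi_{s,t}(z+\rho)$ and $z\mapsto f_t(z+\rho)$, $\rho>0$, by formulas \eqref{rhophi} and \eqref{rhoft}, proves injectivity of the glued map (using that the transition maps have DW-point at $\infty$, so a point hit by a later map must have larger real part), verifies the dilatation bound $k$ off the axis, and then lets $\rho\to0^+$, invoking compactness of normalized $k$-q.c.\ self-maps of $\C$ to obtain simultaneously the continuous boundary extension and the limiting $k$-q.c.\ extension, with $\infty$ fixed by removability of the isolated singularity. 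None of these three ingredients (injectivity of the glued map, the $\rho$-shift approximation, the compactness limit) appears in your proposal, so both the construction and its justification remain open.
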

\begin{remark}
Although formally our result is very similar to that of Becker, there is an essential difference. Becker's construction, as well as its generalization found by Betker~\cite{Betker:1992}, produces quasiconformal maps with two fixed points: an interior fixed point at~$0\in\UD$  and an exterior fixed point  at~$\infty\not\in\overline{\UD}$,--- while the extensions we obtain in Theorem~\ref{TH_main-thrm-in-short} have a boundary fixed point at~$\infty\in\partial\UH$.
\end{remark}

Theorem~\ref{TH_main-thrm-in-short} follows immediately from more technical Theorem~\ref{TH_alldetails}, which we prove in Section~\ref{SS_maintheorem}. Using automorphisms of~$\UH$ we extend this result to the case of unbounded Herglotz functions~$p$ in Section~\ref{SS_case-of-unbdd}, see Corollary~\ref{C_qc-ext}.

Unfortunately, Pommerenke's criterion of a Loewner chain does not apply to the chordal setting. In Section~\ref{S_LCh-criteria} we establish several partial analogues of Pommerenke's criterion for Loewner chains of chordal type.

In Section~\ref{S_sufficient-conditions}, we use these results to obtain several concrete sufficient conditions for quasiconformal extendibility of holomorphic functions in~$\UH$ and $\UD$, see Theorems~\ref{TH_sufficient-qc1},\,\,\ref{TH_sufficient-qc2},\,\,\ref{TH_QCvia-derivative} and Corollaries~\ref{C_qc-sufficient}\,\,and\,\,\ref{CR_QCvia-derivative}. Up to our best knowledge, these results are all new except for Theorem~\ref{TH_sufficient-qc1}, which follows from~\cite[Theorem\,4.1]{SugawaPol}. We also give a simplified proof of a well-known criterion due to Becker and Pommerenke~\cite[Satz\,2]{BecPom:1984}.

To conclude the introduction, let us mention one interesting question.
In view of the fact that the same condition \eqref{U(k)} appears both in Becker's result (Theorem~\ref{TH_Beckerthm}) for radial Loewner chains and in our main result (Theorem \ref{TH_main-thrm-in-short}) for Loewner chains of chordal type, it would be natural to ask whether for any univalent holomorphic function $f:\UD\to\Complex$, $f(0)=f'(0)-1=0$, admitting a $k$-q.c. extension to~$\C$, there exists a (classical radial) Loewner chain $(f_t)$ with $f_0=f$ such that the corresponding classical Herglotz function~$p$ satisfies this condition~\eqref{U(k)}. The answer, even in special cases, does not seem to be known. Closely related but somewhat different problems were  studied in~\cite{Vasiliev1, Vasiliev}.

\section{Preliminaries}
\label{S_preliminaries}
\subsection{Loewner Theory}

Most of the applications make use of the classical Loewner Theory based mainly on the works of  Loewner~\cite{Loewner:1923}, Kufarev~\cite{Kufarev:1943}, Pommerenke~\cite{Pom:1965}, \cite[Chapter\,6]{Pom:1975} and Kufarev \textit{et al}~\cite{Kufarev:1968}. In our case, however, it turns out to be much more beneficial to work in the framework of a general and, in a certain sense, more intrinsic approach suggested recently by Bracci, Contreras and D\'\i{}az-Madrigal \cite{BracciCD:evolutionI,BracciCD:evolutionII}, see also~\cite{MR2789373, ABHK}, which can be regarded as a non-autonomous extension of the theory of one-parameter semigroups in the unit disk.

Denote by $\Hol(A,B)$ the set of holomorphic maps from $A$ to $B$.

\begin{definition}\label{DF_one-param_semigroup}
A \emph{one-parameter semigroup (in the unit disk)} is a continuous semigroup homomorphism~$t\mapsto\phi_t$ from the semigroup $\big([0,+\infty),\cdot+\cdot\big)$ endowed with the Euclidian topology to the semigroup $\Hol(\UD,\UD)$ endowed with the topology of locally uniform convergence in~$\UD$.
\end{definition}
Equivalently, one can think of a one-parameter semigroup as of a family~$(\phi_t)_{t\ge0}\subset\Hol(\UD,\UD)$ such that
\begin{itemize}
\item[(i)] $\phi_0=\id_\UD$;
\item[(ii)] $\phi_s\circ\phi_t=\phi_t\circ\phi_s=\phi_{t+s}$ for any $s,\,t\ge0$;
\item[(iii)] $\phi_t(z)\to z$ as $t\to0^+$ for any~$z\in\UD$.
\end{itemize}

It is known, see~\cite{Berkson-Porta}, that there is a one-to-one correspondence between the one-parameter semigroups in~$\UD$ and the class of holomorphic functions~$G\colon\UD\to\Complex$, the so-called \emph{infinitesimal generators}, that can be represented by the Berkson\,--\,Porta formula
\begin{equation}\label{EQ_BP_f-la}
G(z)=(\tau-z)(1-\overline \tau z)p(z),
\end{equation} where $\tau\in\overline\UD$ and $p\in\Hol(\UD,\Complex)$, $\Re p\ge0$.
Namely, any infinitesimal generator $G$ defines a one-parameter semigroup~$(\phi_t)$ by means the initial value problem
\begin{equation}\label{EQ_INI-one-param}
\frac{d\phi_t(z)}{dt}=G(\phi_t(z)),\quad t\ge0,\qquad \phi_0(z)=z,
\end{equation}
which has a unique solution $[0,+\infty)\ni t\mapsto \phi_t(z)\in\UD$ for each~$z\in\UD$. Conversely, given a one-parameter semigroup~$(\phi_t)$, there exists a unique infinitesimal generator~$G$ such that~\eqref{EQ_INI-one-param} holds.

\begin{remark}
If $p\not\equiv0$, then the point~$\tau$ in~\eqref{EQ_BP_f-la}, called the \emph{Denjoy\,--\,Wolff point} of the semigroup~$(\phi_t)$ (in short, the \emph{DW-point}), is the common Denjoy\,--\,Wolff point (see Definition~\ref{DF_DW-pint}) of all $\phi_t$'s that are different from~$\id_\UD$.
\end{remark}

It is also known, see, e.g., \cite[Theorems\,1.4.22,\,1.4.23]{Abate:book}, that every non-trivial\footnote{A one-parameter semigroup $(\phi_t)$ is called \emph{trivial} if $\phi_t=\id_\UD$ for all~$t\ge0$.} one-parameter semigroup can be linearized by means of a conformal change of variable. If the DW-point~$\tau$ belongs to~$\UD$, then there exists a unique univalent holomorphic function ${h:\UD\to\Complex}$, $h(\tau)=0$, $h'(\tau)=1$, and a number $\lambda$, $\Re \lambda \le 0$, that satisfy the \emph{Schr\"oder functional equation} $h\circ \phi_t=e^{\lambda t} h$ for all~$t\ge0$. If~$\tau\in\partial\UD$, then there exists a univalent holomorphic function~$h:\UD\to\Complex$, $h(0)=0$, that solves the \emph{Abel functional equation} ${h\circ \phi_t=h+t}$ for all~$t\ge0$.
\begin{definition}
The function~$h$ above is called the \emph{K\oe nigs function} of~$(\phi_t)$.
\end{definition}

One of the three fundamental notions in Loewner Theory (which corresponds to the K{\oe}nigs function in the theory of one-parameter semigroups, see, e.g.~\cite[Sect.\,5]{MR2789373}) is defined as follows.
\begin{definition}[\cite{MR2789373}]\label{DF_L-ch}
A family $(f_t)_{t\ge0}$ of holomorphic functions~$f_t:\UD\to\Complex$ is said to be a {\sl Loewner chain (in the unit disk)} if the following three conditions hold:
\begin{enumerate}
\item[LC1.] each function $f_t:\D\to\C$ is univalent;

\item[LC2.] $f_s(\D)\subset f_t(\D)$ for all $0\leq s < t<+\infty$;

\item[LC3.] for any compact set $K\subset\mathbb{D}$  there exists a
non-negative locally integrable function $k_{K}:[0,+\infty)\to\Real$ such that
\[
|f_s(z)-f_t(z)|\leq\int_{s}^{t}k_{K}(\xi)d\xi
\]
for all $z\in K$ and all $s,t\ge0$ with $t\ge s$.
\end{enumerate}
\end{definition}

For any Loewner chain $(f_t)$,  the functions~$\varphi_{s,t}:=f_t^{-1}\circ f_s\in\Hol(\UD,\UD)$, $t\ge s\ge 0$, form a two-parameter family~$(\varphi_{s,t})$ satisfying the following three conditions, see \cite[Theorem~1.3]{MR2789373},
\begin{enumerate}
\item[EF1.] $\varphi_{s,s}=\id_{\mathbb{D}}$;

\item[EF2.] $\varphi_{s,t}=\varphi_{u,t}\circ\varphi_{s,u}$ whenever $0\leq
s\leq u\leq t<+\infty$;

\item[EF3.] for each $z\in\mathbb{D}$ there exists a
locally integrable function ${k_{z,T}\colon[0,+\infty)\to[0,+\infty)}$ such that
\[
|\varphi_{s,u}(z)-\varphi_{s,t}(z)|\leq\int_{u}^{t}k_{z,T}(\xi)d\xi
\]
whenever $0\leq s\leq u\leq t<+\infty.$
\end{enumerate}

\begin{definition}[\cite{BracciCD:evolutionI}]\label{DF_EF}
A family~$(\varphi_{s,t})_{t\ge s\ge0}\subset\Hol(\UD,\UD)$ satisfying the above conditions~EF1, EF2, and~EF3 is called an \textsl{evolution family (in the unit disk)}.
\end{definition}
\begin{remark}
In~\cite{BracciCD:evolutionI} and \cite{MR2789373}, the definitions of Loewner chains and evolution families contain an integrability order  parameter~$d\in[1,+\infty]$. For our purposes this parameter is irrelevant, so we work with the most general case of order~$d=1$.
\end{remark}

Evolution families play the role of a non-autonomous analogue of one-parameter semigroups in~$\UD$. Accordingly, evolution families in the unit disk appear to be non-autonomous flows of the \emph{time-variable} infinitesimal generators, the so-called Herglotz vector fields.

\begin{definition}[\cite{BracciCD:evolutionI}]
\label{DF_HerglVF} A function $G:\mathbb{D}\times[0,+\infty)\to \mathbb{C}$ is called a {\sl Herglotz
 vector field (in the unit disk)} if it satisfies the following three conditions:
\begin{enumerate}
\item[H1.] for any $z\in\mathbb{D},$ the function $[0,+\infty)\ni t\mapsto G(z,t)$ is measurable;

\item[H2.] for any compact set $K\subset\mathbb{D}$  there
exists a non-negative locally integrable function $k_{K}\colon[0,+\infty)\to\Real$ such that $|G(z,t)|\leq k_{K}(t)$
for all $z\in K$ and for a.e.~$t\in[0,+\infty)$;
\item[H3.] for a.e. $t\in [0,+\infty)$,
$G_t:=G(\cdot, t)$ is an infinitesimal generator.
\end{enumerate}
\end{definition}
For any Herglotz vector field~$G$ there exists a unique evolution family~$(\varphi_{s,t})$ such that for each $s\ge0$ and each $z\in\UD$ the function $s\le t\mapsto w(t):=\varphi_{s,t}(z)$ solves the following initial value problem for the \textsl{general version of the Loewner\,--\,Kufarev ODE},
\begin{equation}\label{EQ_LK-ODE-prelim}
\frac{dw(t)}{dt}=G\big(w(t),t\big),\quad w(0)=z,
\end{equation}
which is to be understood as a Carath\'eodory first-order ODE, see, e.g., \cite[Ch.\,18]{Kurzweil} or \cite[Sect.\,2]{CDGannulusI}.
Conversely, every evolution family~$(\varphi_{s,t})$ is obtained in this way and the corresponding Herglotz vector field~$G$ in~\eqref{EQ_LK-ODE-prelim} is unique up to a set of measure zero on the~$t$-axis, see~\cite[Theorem~1.1]{BracciCD:evolutionI}.

Finally, given an evolution family~$(\varphi_{s,t})$, there exists a Loewner chain~$(f_t)$ such that $\varphi_{s,t}=f_t^{-1}\circ f_s$ whenever $0\le s\le t$. This Loewner chain~$(f_t)$, which we say to be {\sl associated with}~$(\varphi_{s,t})$, is unique up to the post-composition with conformal maps of~$\Omega:=\cup_{t\ge0} f_t(\UD)$, see~\cite[Theorem~1.7]{MR2789373}. Substituting $\varphi_{s,t}=f_t^{-1}\circ f_s$ to~\eqref{EQ_LK-ODE-prelim}, one obtains the differential equation for~$(f_t)$, i.e. the \textsl{general version of the Loewner\,--\,Kufarev PDE}
\begin{equation}\label{EQ_LK-PDE-prelim}
\frac{\partial {f_t}(z)}{\partial t}=-f_t'(z)G(z,t),
\end{equation}
where $G$ is the Herglotz vector field corresponding to~$(\varphi_{s,t})$.

It is a trivial but important remark that all the definitions and results mentioned above in this section, with just a few obvious modification imposed by the change of variable, can be repeated in case of the unit disk~$\UD$  replaced by any domain $D\subset\C$ conformally equivalent to~$\UD$.  In particular, we will make use of one-parameter semigroups, Loewner chains, evolution families, and Herglotz vector fields in the right half-plane $D=\UH:=\{z\colon\Re z>0\}$.

To reduce the amount of Loewner chains associated to a given evolution family, we will consider, in certain cases, what we call \textsl{range-normalized Loewner chains}.
\begin{definition}\label{DF_range-norm}
A Loewner chain~$(f_t)$ in the right half-plane~$\UH$ is said to be \textsl{range-normalized} if $\Omega:=\cup_{t\ge0} f_t(\UH)$ is either $\C$ or a half-plane.
\end{definition}
Similarly, a Loewner chain in~$\UD$  is said to be {range-normalized} if $\Omega:=\cup_{t\ge0} f_t(\UD)$ is either $\C$ or a (euclidian) disk.
\begin{remark}
The so-called standard Loewner chains in~$\UD$ defined in~\cite{MR2789373} are range-normalized, but not vice versa.  Standard Loewner chains are in one-to-one correspondence with evolution families in~$\UD$, see \cite[p.\,981]{MR2789373}. In this paper, where mainly the case of the half-plane~$\UH$ is considered, we  prefer to work with the weaker normalization given by Definition~\ref{DF_range-norm}, which still permits Loewner chains associated with the same evolution family differ by a M\"obius transformation.
\end{remark}

    %
    %
       \subsection{Classical radial Loewner chains}\label{SS_classical-radial}
%
In modern literature, by the \textsl{classical radial} Loewner chains and evolution families one means the special kind of Loewner chains and evolution families defined by Pommerenke\footnote{These definitions and basic facts mentioned below can be found in~\cite[Chapter~6]{Pom:1975}.}, which can be obtained from the general Definitions~\ref{DF_L-ch} and~\ref{DF_EF} by imposing some additional normalizations, namely,
\begin{equation}\label{EQ_classical-radial-normalizations}
\varphi_{s,t}(0)=0,\quad \varphi_{s,t}'(0)=e^{s-t},\quad\text{and}\quad f_t(0)=0,\quad f'_t(0)=e^{t}\quad\text{ whenever $t\ge s\ge0$.}
\end{equation}

Similarly to the general case, there is a correspondence between classical  radial evolution families~$(\varphi_{s,t})$ and classical  radial Loewner chains~$(f_t)$. Moreover, it is one-to-one: $(f_t)$ can be reconstructed from its evolution family by means of the formula~$f_s=\lim_{t\to+\infty}e^t\varphi_{s,t}$ for all~$s\ge0$. Normalization~\eqref{EQ_classical-radial-normalizations} obviously forces elements of evolution families to share the DW-point at~$z=0$. The corresponding Herglotz vector fields take then the following special form
$$
G(w,t)=-w p(w,t),
$$
where $p$ is a \textsl{classical Herglotz function}, i.e. a function $p:\UD\times[0,+\infty)\to\Complex$ such that $p(z,\cdot)$ is measurable for any fixed~$z\in\UD$, $p(\cdot,t)$ is holomorphic in~$\UD$ with~$\Re p>0$  and satisfies the normalization~$p(0,t)=1$ for a.e. $t\ge0$.

Thus, any classical radial Loewner chain  satisfies the classical version of equation~\eqref{EQ_LK-PDE-prelim}
\begin{equation}\label{EQ_LK-PDE}
\frac{\partial f_t(z)}{\partial t}=zf_t'(z)p(z,t).
\end{equation}

    %
    %
   \subsection{Loewner chains of chordal type}
    %
 According to the classical Denjoy\,--\,Wolff Theorem, for any holomorphic self-map $\varphi\colon \UD\to\UD$ different from~$\id_\UD$ there exists a unique point~$\tau$ in the closure~$\overline\UD$ of~$\UD$ such that $\varphi(\tau)=\tau$ and $|\varphi'(\tau)|\le 1$. In case $\tau\in\partial\UD$, this should be understood in the sense of angular limits: $\anglim_{z\to\tau}\varphi(z)=\tau$ and $\varphi'(\tau):=\anglim_{z\to\tau}(\varphi(z)-\tau)/(z-\tau)\le 1$. This point plays a special role in the theory of holomorphic self-maps, because the iterates $\varphi^{\circ n}(z)$ converge to~$\tau$ locally uniformly in~$\UD$ as $n\to+\infty$, unless $\varphi$ is an elliptic automorphism,  see, e.g., \cite[\S1.2.2,\,\S1.3.2]{Abate:book}.

\begin{definition}\label{DF_DW-pint}
The point~$\tau=\tau(\varphi)$ defined above is called the \textsl{Denjoy\,--\,Wolff point} (the \textsl{DW-point}, in short) of~$\varphi$.
\end{definition}

\begin{remark}
Passing from~$\UD$ to~$\UH$ by means of the Cayley transform $H(z):={(1+z)/(1-z)}$, $z\in\UD$, we can define the DW-point also for holomorphic self-maps of~$\UH$.
In particular, if~$\varphi\in\Hol(\UH,\UH)\setminus\{\id_\UH\}$ and $\tau(\varphi)=\infty$, then there exists the so-called Carath\'eodory angular derivative of~$\varphi$ at~$\infty$, $$\varphi'(\infty):=\anglim_{z\to\infty}\frac{\varphi(z)}z=\frac1{(H^{-1}\circ\varphi\circ H)'(1)}\ge 1,$$ and by the half-plane version of the Julia\,--\,Wolff\,--\,Carath\'eodory Theorem (see, e.g., \cite[Ch.\,IV~\S26]{Valiron}), \begin{equation}\label{EQ_JWC}\Re \varphi(z)\ge \varphi'(\infty)\,\Re z\quad\text{for all~$z\in\UH$.}\end{equation}
\end{remark}

In the classical radial case, all elements of an evolution family different from~$\id$ share the same interior DW-point.
Following~\cite{CDGgeometry}, in this paper we mainly consider the case of the boundary DW-point at~$\infty$.
\begin{definition}\label{D_LCh_chordal-type}
An evolution family $(\varphi_{s,t})$ in~$\UH$ is said to be \textsl{of chordal type}, if it has the common DW-point at~$\infty$, i.e. $\tau(\varphi_{s,t})=\infty$ whenever $t\ge s\ge 0$ and $\varphi_{s,t}\neq\id_\UH$.
Accordingly, a Loewner chain~$(f_t)$ in~$\UH$ is said to be \textsl{of chordal type} if the corresponding evolution family~$(\varphi_{s,t})=(f_t^{-1}\circ f_s)$ is of chordal type.
\end{definition}

For the infinitesimal generators of non-trivial one-parameter semigroups $(\phi_t)$ in~$\UH$ with the common DW-point at~$\infty$, the Berkson\,--\,Porta formula~\eqref{EQ_BP_f-la} takes the form ${G(z)=p(z})$, $z\in\UH$, where $p\in\Hol(\UH,\C)\setminus\{0\}$, $\Re p\ge0$.
This explains the result below.  We make use of the following definition.
\begin{definition}
A function $p:\UH\times[0,+\infty)\to\Complex$ satisfying the following conditions
\begin{itemize}
\item[HF1.] $p(\cdot,t)$ is holomorphic in~$\UH$ for a.e. $t\ge0$;
\item[HF2.] $p(z,\cdot)$ is locally integrable on~$[0,+\infty)$ for all~$z\in\UH$;
\item[HF3.] $\Re p\ge0$,
\end{itemize}
is called a \textsl{Herglotz function} in~$\UH$.
\end{definition}
\begin{proposition}[\protect{\cite[Theorem 6.7, Corollary 7.2]{BracciCD:evolutionI}}]\label{PR_VF-chordal-type}
Let $(\varphi_{s,t})$ be an evolution family in~$\UH$ and $G$ be the corresponding Herglotz vector field in~$\UH$. Then $(\varphi_{s,t})$ is of chordal type if and only if $G(z,t)=p(z,t)$ for all $z\in\UH$ and  all ${t\in[0,+\infty)\setminus N}$, where $N\subset[0,+\infty)$ is a set of measure zero and $p$ is a Herglotz function in~$\UH$.
\end{proposition}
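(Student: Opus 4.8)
The plan is to reduce the condition ``DW-point at $\infty$'' to the analytic inequality $\Re\varphi(z)\ge\Re z$ and then to move this inequality between the evolution family $(\varphi_{s,t})$ and its Herglotz vector field $G$ by integrating, resp.\ differentiating, the Loewner--Kufarev ODE~\eqref{EQ_LK-ODE-prelim}. The key ingredient is the elementary lemma: \emph{if $\varphi\in\Hol(\UH,\UH)$ and $\varphi\ne\id_\UH$, then $\tau(\varphi)=\infty$ if and only if $\Re\varphi(z)\ge\Re z$ for all $z\in\UH$.} Its ``only if'' part is immediate from~\eqref{EQ_JWC} together with $\varphi'(\infty)\ge1$. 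For the ``if'' part, assume $\Re\varphi\ge\Re(\cdot)$ on $\UH$ and let $\tau=\tau(\varphi)$. If $\tau\in\UH$, then $z\mapsto\Re\big(\varphi(z)-z\big)$ is a non-negative harmonic function vanishing at the interior point $\tau=\varphi(\tau)$, hence identically zero by the minimum principle; so $\varphi(z)-z$ is an imaginary constant, which vanishes because $\varphi(\tau)=\tau$, contradicting $\varphi\ne\id_\UH$. If $\tau\in i\mathbb{R}$ (a finite boundary point), then $\varphi$ is not an elliptic automorphism, so by the Denjoy--Wolff theorem (transported to $\UH$ via the Cayley transform) $\varphi^{\circ n}(z)\to\tau$, hence $\Re\varphi^{\circ n}(z)\to0$; but iterating the hypothesis gives $\Re\varphi^{\circ n}(z)\ge\Re z>0$, a contradiction. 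Therefore $\tau=\infty$.

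For the implication ``$G=p$ with $p$ a Herglotz function $\Rightarrow$ $(\varphi_{s,t})$ of chordal type'': fix $s\ge0$ and $z\in\UH$; then $t\mapsto w(t):=\varphi_{s,t}(z)$ is locally absolutely continuous with $w'(t)=p\big(w(t),t\big)$ for a.e.\ $t\ge s$, so since $\Re p\ge0$ on $\UH$ the function $\Re w$ is non-decreasing, giving $\Re\varphi_{s,t}(z)\ge\Re\varphi_{s,s}(z)=\Re z$ for all $t\ge s$. The lemma then forces $\tau(\varphi_{s,t})=\infty$ whenever $\varphi_{s,t}\ne\id_\UH$, i.e.\ $(\varphi_{s,t})$ is of chordal type.

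For the converse: set $p:=G$. Conditions HF1 and HF2 hold automatically, since $G(\cdot,t)$ is an infinitesimal generator (hence holomorphic in $\UH$) for a.e.\ $t$ and by the local bound~H2, so only $\Re G\ge0$ (for a.e.\ $t$, all $z$) remains to be verified. Being of chordal type, together with the ``only if'' half of the lemma and the trivial case $\varphi_{s,t}=\id_\UH$, yields $\Re\varphi_{s,t}(z)\ge\Re z$, i.e.\ (through~\eqref{EQ_LK-ODE-prelim}) $\Re\int_s^{t}G\big(\varphi_{s,\xi}(z),\xi\big)\,d\xi\ge0$, for all $0\le s\le t$ and $z\in\UH$. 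Dividing by $t-s$ and letting $t\to s^+$ — using continuity of the evolution family, a Cauchy estimate, and the bound~H2 at a common Lebesgue point of the integrands to replace $\varphi_{s,\xi}(z)$ by $z$ inside the average — gives $\Re G(z,s)\ge0$ for a.e.\ $s$; running this over a countable dense set of $z$'s and using continuity of $G(\cdot,s)$ upgrades it to $\Re G(z,s)\ge0$ for all $z\in\UH$ and a.e.\ $s$. Redefining $G$ on the resulting null set $N$ (say, as the constant $1$) produces a Herglotz function $p$ with $G(\cdot,t)=p(\cdot,t)$ for $t\notin N$.

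The step I expect to require the most care is this last passage — from the integrated inequality to ``$\Re G(z,s)\ge0$ for a.e.\ $s$ and every $z$'' — a routine Lebesgue-point argument for Carath\'eodory ODEs that nonetheless demands a choice of the exceptional null set uniform in $z$, which is why one passes through a countable dense set and uses the holomorphy of $G(\cdot,s)$. Everything else is soft; the one genuinely new point is the ``if'' direction of the lemma, where the Denjoy--Wolff iteration converts the first-order information $\Re\varphi\ge\Re(\cdot)$ into the location of the Denjoy--Wolff point.
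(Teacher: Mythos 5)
Your argument is correct, but note that the paper does not prove Proposition~\ref{PR_VF-chordal-type} at all: it is imported verbatim from \cite{BracciCD:evolutionI} (Theorem 6.7 and Corollary 7.2), where the proof runs through the Berkson--Porta representation of Herglotz vector fields and a non-autonomous Julia-type analysis of boundary regular fixed points of evolution families. Your route is genuinely different and essentially self-contained: you replace the machinery of common boundary Denjoy--Wolff points by the elementary characterization ``$\tau(\varphi)=\infty$ iff $\Re\varphi(z)\ge\Re z$ on $\UH$'' (whose ``only if'' half is exactly \eqref{EQ_JWC}, and whose ``if'' half you settle correctly by the minimum principle in the interior case and by Denjoy--Wolff iteration in the finite-boundary case), and you then transfer this inequality between $(\varphi_{s,t})$ and $G$ through \eqref{EQ_LK-ODE-prelim}: monotonicity of $\Re\varphi_{s,t}(z)$ in $t$ in one direction, and a Lebesgue-differentiation argument in the other. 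The delicate step you flag --- passing from $\Re\int_s^t G(\varphi_{s,\xi}(z),\xi)\,d\xi\ge0$ to $\Re G(z,s)\ge0$ for a.e.~$s$ and all $z$ --- is handled soundly: the Cauchy estimate from H2 together with EF3 controls the replacement of $\varphi_{s,\xi}(z)$ by $z$ at common Lebesgue points, and the countable-dense-set plus holomorphy (from H3) trick gives the exceptional null set uniformly in $z$; redefining $G(\cdot,t)\equiv1$ on that null set then yields the Herglotz function $p$. What the citation buys is greater generality (arbitrary DW-points $\tau\in\overline\UD$, finer $L^d$-regularity statements); what your proof buys is an elementary, self-contained derivation using only the Julia--Wolff--Carath\'eodory inequality, the Denjoy--Wolff theorem and Carath\'eodory ODE basics already quoted in the paper.
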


In view of Proposition~\ref{PR_VF-chordal-type}, any Loewner chain~$(f_t)$ of chordal type and the corresponding evolution family $(\varphi_{s,t})$ satisfy the chordal Loewner\,--\,Kufarev PDE and ODE, respectively,
\begin{equation}\label{EQ_ODE-chordal-type}
\frac{d\varphi_{s,t}(z)}{dt}=p\big(\varphi_{s,t}(z),t\big),\quad t\ge s,\quad \varphi_{s,s}(z)=z,
\end{equation}
\begin{equation}\label{EQ_PDE-chordal-type}
\frac{\partial f_t(z)}{\partial t}=-p(z,t)f_t'(z),\quad t\ge0,~~z\in\UH,
\end{equation}
with some Herglotz function~$p$ determined uniquely up to a set of measure zero.
\begin{definition}
We call~$p$ in~\eqref{EQ_ODE-chordal-type} and~\eqref{EQ_PDE-chordal-type} the \textsl{Herglotz function associated with} $(\varphi_{s,t})$ and  $(f_t)$. Conversely, given a Herglotz function $p:\UH\to\C$, the unique evolution family~$(\varphi_{s,t})$ in~$\UH$ satisfying~\eqref{EQ_ODE-chordal-type} and all the Loewner chains of chordal type corresponding to~$(\varphi_{s,t})$ will be called the \textsl{chordal evolution family} and the \textsl{chordal Loewner chains associated with} the Herglotz function~$p$.
\end{definition}

\begin{remark}
Essentially, one special case of Loewner chains of chordal type was studied by Kufarev \textit{et al} \cite{Kufarev:1968}, Aleksandrov~\cite{AleksST}, Aleksandrov \textit{et al}~\cite{Aleks1983}, Goryainov and Ba~\cite{GoryainovBa:1992} and by Bauer~\cite{Bauer:2005}, see also \cite[Sect.\,5]{CDGgeometry}. However, it is worth to mention that the Loewner chains considered by these authors do not satisfy the hypothesis of our main Theorem~\ref{TH_main-thrm-in-short}. It might be an interesting problem to find a construction of q.c.-extensions making use of that kind of Loewner chains of chordal type.
\end{remark}

    %
    %
            \subsection{Quasiconformal extensions for Loewner chains}
    %

As we mentioned above, in the classical radial case there is an essentially one-to-one correspondence between  Loewner chains $(f_t)$ and classical Herglotz functions $p$. Moreover, it is known that for any conformal map ${f:\UD\Mapinto\Complex}$ with~${f'(0)-1=}{f(0)=0}$, there exists a classical radial Loewner chain~$(f_t)$ such that~$f=f_0$, see~\cite[Theorem\,6.1]{Pom:1975} and~\cite{Gutljanski}.

Therefore, it is a very natural general problem to investigate the interplay between geometric properties of $(f_t)$ and analytic properties of $p$.

 Recall that an orientation-preserving homeomorphism $f$ of a plane domain $G \subset \C$ is said to be \textit{$k$-quasiconformal} (or \textit{k-q.c.}) if $f$ has distributional derivatives ${\de_{z} f := \de f/\de z}$ and ${\de_{\bar{z}} f := \de f/\de \bar{z}}$, both of class $L_{\text{loc}}^1$, satisfying $|\de_{\bar{z}} f| \leq k |\de_{z} f|$ almost everywhere in $G$, where $k \in [0,1)$ is a constant.
We simply say that $f$ is \textit{quasiconformal} if $k$ needs not to be specified. Quasiconformality is invariant under pre- and post-composition with conformal mappings. This allows one to define $k$-q.c. mappings between Riemann surfaces using local coordinates.

For a given conformal mapping $f$ on $G$, $f$ is said to \textit{have a quasiconformal extension} to~$\C$ (or to~$\ComplexE$) if there exists a quasiconformal self-map $\tilde f$ of $\C$ (or of~$\ComplexE$, respectively) whose restriction to $G$ coincides with $f$. Thanks to the removability property for isolated singularities of q.c.-mappings, see, e.g., \cite[Chapter~I, \S8.1]{LehtoVirtanen:1973}, quasiconformal self-maps of $\C$ and $\ComplexE$ are, in fact, automorphisms of~$\C$ and $\ComplexE$, respectively; moreover, $k$-q.c. extendibility to~$\C$ is stronger than that to~$\ComplexE$, because it is equivalent to the existence of a $k$-q.c. extension~$\tilde f$ to~$\ComplexE$ with the additional property that $\tilde f(\infty)=\infty$.

For a comprehensive survey of the theory of quasiconformal mappings in the plane, see \cite{Ahlfors:2006}, \cite{LehtoVirtanen:1973} and \cite[Chapter 4]{ImayoshiTaniguchi:1992}.

In 1972, Becker showed that if for almost every $t \geq 0$, $p(\D, t)$ lies in a compact subset of the right half-plane $\UH :=\{z \in \C : \Re z > 0\}$ not depending on~$t$, then $f_0$ has a q.c.-extension to $\C$. Namely, he proved the following result.

\begin{theorem}[{\cite{Becker:1972, Becker:1976}}]

            \label{TH_Beckerthm}

Let $k \in [0,1)$ be a constant.
Suppose that $(f_t)$ is a classical radial Loewner chain for which the classical Herglotz function $p$ in the Loewner\,--\,Kufarev PDE~\eqref{EQ_LK-PDE} satisfies
\begin{eqnarray*}
p(z,t) \in U(k) &:=& \left\{ w \in \C : \left|\frac{w-1}{w+1}\right| \leq k\right\}\\
&=&\left\{ w \in \C : \left|w - \frac{1+k^2}{1-k^2}\right| \leq \frac{2k}{1-k^2}\right\}
\end{eqnarray*}
for all $z \in \D$ and almost all $t\ge 0$.
Then the function $F$ defined by
\begin{equation}
\label{functionF}
F(z) := \left\{
\begin{array}{ll}
f_0(z), & z \in \D,\\
\displaystyle f_{\log |z|}\left(\frac{z}{|z|}\right), & z \in \C\backslash\overline{\D},
\end{array}
\right.
\end{equation}
is a $k$-quasiconformal mapping of $\C$.
\end{theorem}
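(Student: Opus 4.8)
The plan is to carry out Becker's original construction, in which $\C\setminus\overline{\D}$ is turned into the ``time axis'' of the chain and the quasiconformality of the glued map is read off directly from the Loewner--Kufarev PDE~\eqref{EQ_LK-PDE}. One point needs care at the outset: the formula $F(z)=f_{\log|z|}(z/|z|)$ evaluates $f_t$ on $\partial\D$, which a general Loewner chain need not permit. I would circumvent this by first treating, for each $r\in(0,1)$, the truncated map
\[
F_r(z):=\begin{cases}f_0(z),&|z|\le r,\\[1mm] f_{\log(|z|/r)}(r\,z/|z|),&|z|\ge r,\end{cases}
\]
which involves only interior evaluations of the $f_t$ and matches continuously across $\{|z|=r\}$. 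I would prove that each $F_r$ is a $k$-quasiconformal self-map of $\C$ with $F_r|_{r\D}=f_0|_{r\D}$, and then recover $F$ --- together with the boundary values of the $f_t$ implicit in the statement --- as the limit $F=\lim_{r\to1^-}F_r$, using the compactness of families of $k$-quasiconformal maps normalised, say, by $F_r(0)=0$ and $F_r(z_1)=f_0(z_1)$ for a fixed $z_1\in\D\setminus\{0\}$ (which keeps the limit non-constant).

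Two facts about the ranges $D_t^r:=f_t(r\D)$ drive the argument. By the Koebe growth theorem $D_t^r$ contains the disk of radius $e^t r/(1+r)^2$ centred at the origin, so $\bigcup_{t\ge0}D_t^r=\C$. On the other hand, since $U(k)$ is a compact subset of the right half-plane, $\Re p(z,t)\ge c:=(1-k)/(1+k)>0$ for all $z,t$; from the radial evolution ODE $\dot\varphi_{s,t}(z)=-\varphi_{s,t}(z)p(\varphi_{s,t}(z),t)$ one gets $\frac{d}{dt}\log|\varphi_{s,t}(z)|\le-c$, so $|\varphi_{s,t}(z)|\le e^{-c(t-s)}|z|$, which forces $\overline{D_s^r}\subset D_t^r$ whenever $0\le s<t$. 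These two facts show that $\C$ is the disjoint union of the open set $f_0(r\D)$ and the Jordan curves $\gamma_t:=f_t(r\,\partial\D)$, $t\ge0$, and that $F_r$ maps $r\D$ homeomorphically onto $f_0(r\D)$ and each circle $\{|z|=re^t\}$ homeomorphically onto $\gamma_t$; being continuous, $F_r$ is therefore a continuous bijection of $\C$ onto itself, hence a homeomorphism by invariance of domain. Moreover $F_r$ is locally Lipschitz on $\C$ --- holomorphic on $r\D$, and locally Lipschitz on $\{|z|>r\}$ because $p$ is bounded on the bounded set $U(k)$ (so $|\dot f_t|=|zf_t'p|$ is locally bounded) and the univalent maps $f_t$ satisfy uniform Koebe distortion bounds --- hence $F_r$ is ACL with $\partial_zF_r,\partial_{\bar z}F_r\in L^\infty_{\mathrm{loc}}(\C)$.

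The heart of the proof is the dilatation computation. On $r\D$, $F_r=f_0$ gives $\partial_{\bar z}F_r=0$. On $\{|z|>r\}$, set $t:=\log(|z|/r)$ and $\eta:=r\,z/|z|\in\D$; then $\partial_z t=\frac1{2z}$, $\partial_{\bar z}t=\frac1{2\bar z}$, $\partial_z\eta=\frac{r}{2|z|}$, $\partial_{\bar z}\eta=-\frac{\eta}{2\bar z}$, and the chain rule combined with $\dot f_t(\eta)=\eta f_t'(\eta)p(\eta,t)$ (from~\eqref{EQ_LK-PDE}) gives
\[
\partial_z F_r=\frac{r\,f_t'(\eta)}{2|z|}\,(1+p(\eta,t)),\qquad \partial_{\bar z}F_r=\frac{\eta\,f_t'(\eta)}{2\bar z}\,(p(\eta,t)-1).
\]
Since $f_t'(\eta)\ne0$ and $\Re p>0$ (so $p(\eta,t)+1\ne0$), for a.e.\ $z$ with $|z|>r$ we get
\[
\mu_{F_r}(z)=\frac{\partial_{\bar z}F_r(z)}{\partial_z F_r(z)}=\Big(\frac{z}{|z|}\Big)^{2}\frac{p(\eta,t)-1}{p(\eta,t)+1},\qquad\text{so}\qquad |\mu_{F_r}(z)|=\Big|\frac{p(\eta,t)-1}{p(\eta,t)+1}\Big|\le k
\]
by the hypothesis $p(\eta,t)\in U(k)$; in particular $|\partial_zF_r|^2-|\partial_{\bar z}F_r|^2>0$, so $F_r$ is sense-preserving. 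Hence $F_r$ is an orientation-preserving ACL-homeomorphism of $\C$ with $|\partial_{\bar z}F_r|\le k|\partial_zF_r|$ a.e., i.e.\ it is $k$-quasiconformal; letting $r\to1^-$ completes the proof.

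The step I expect to be the real obstacle is not the one-line dilatation identity but the two softer ingredients it rests on: showing that $F_r$ is ACL across the junction circle $\{|z|=r\}$ with no singular mass of $\partial_{\bar z}F_r$ concentrated there (this is exactly what the interior-evaluation truncation secures, by making $F_r$ honestly locally Lipschitz), and carrying out the limit $r\to1^-$ inside the compactness theory of normalised $k$-quasiconformal maps, including the verification that the limit is a genuine $k$-quasiconformal homeomorphism and not a constant.
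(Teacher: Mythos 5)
Your proposal is correct and follows essentially the same route as the source it is drawn from and as the paper's own proof of its chordal analogue (Theorem~\ref{TH_alldetails}): truncate so that only interior values of the chain are used (your $F_r$ plays exactly the role of the maps $f^\rho_t$, $\varphi^\rho_{s,t}$ in Section~\ref{SS_maintheorem}), read the bound $\bigl|\partial_{\bar z}F_r/\partial_z F_r\bigr|\le k$ directly off the Loewner--Kufarev PDE, and then let the truncation parameter tend to its limit using the compactness of normalized $k$-quasiconformal maps, which simultaneously produces the boundary values of the $f_t$ implicit in~\eqref{functionF}. Your dilatation computation and the nesting/exhaustion argument for the ranges $f_t(r\UD)$ are correct, and the two delicate points you flag (ACL across the junction circle, identification of the limit) are handled in the paper's analogous proof by the very same devices, so there is no substantive deviation and no gap beyond what that argument itself leaves to standard quasiconformal compactness.
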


Many sufficient conditions for univalent functions with quasiconformal extensions have been derived by Theorem \ref{TH_Beckerthm}.
In this paper we will prove an analogue of Becker's Theorem \ref{TH_Beckerthm} for Loewner chains of chordal type and apply it for the proof of several criteria of q.c.-extendibility of holomorphic functions in~$\UH$.



\section{Quasiconformal extendibility of Loewner chains and evolution families}
\label{S_qc-ext-of-Loewner_chains-and-evol_fam}
\subsection{General case}

We start with a simple result that reveals a direct relation between q.c.-extendibility of evolution families and associated Loewner chains in the general case.
We say that a family of functions~$\mathcal F$ is \textit{uniformly q.c.-extendible} if there is~$k\in[0,1)$ such that every $f\in\mathcal F$ has a $k$-q.c. extension to~$\ComplexE$. Note that here we do not require the q.c.-extensions to fix the point~$\infty$.
\begin{proposition}\label{PR_QC-EF-LCh}
Let $(\varphi_{s,t})$ be an evolution family in~$\UD$ or~$\UH$. Then the following three statements are equivalent:
\begin{itemize}
\item[(i)] $(\varphi_{s,t})$ is uniformly q.c.-extendible;
\item[(ii)] there exists a uniformly q.c.-extendible Loewner chain~$(f_t)$ associated with~$(\varphi_{s,t})$;
\item[(iii)] some (and hence any) range-normalized Loewner chain associated with~$(\varphi_{s,t})$ is uniformly q.c.-extendible.
\end{itemize}
\end{proposition}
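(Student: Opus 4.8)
The plan is to prove the cyclic chain of implications $(iii)\Rightarrow(ii)\Rightarrow(i)\Rightarrow(iii)$, exploiting the fact that quasiconformality is preserved under pre- and post-composition with conformal maps, and that any two Loewner chains associated with the same evolution family differ by post-composition with a conformal map.

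First, $(iii)\Rightarrow(ii)$ is immediate: a range-normalized Loewner chain associated with $(\varphi_{s,t})$ exists by the general theory (Theorem~1.7 of~\cite{MR2789373}, in the half-plane/disk version), so if it is uniformly q.c.-extendible, it already witnesses~$(ii)$. Next, for $(ii)\Rightarrow(i)$, suppose $(f_t)$ is a uniformly q.c.-extendible Loewner chain associated with $(\varphi_{s,t})$, with each $f_t$ admitting a $k$-q.c. extension $\tilde f_t$ to $\ComplexE$. Since $\varphi_{s,t}=f_t^{-1}\circ f_s$, the map $\tilde f_t^{-1}\circ\tilde f_s$ is a $k'$-q.c. self-map of $\ComplexE$ (a $k$-q.c. map composed with the inverse of a $k$-q.c. map is $k'$-q.c. with $k'=2k/(1+k^2)<1$, uniformly in $s,t$), and it restricts to $\varphi_{s,t}$ on the domain. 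Hence $(\varphi_{s,t})$ is uniformly q.c.-extendible. The genuinely substantive implication is $(i)\Rightarrow(iii)$: here one starts from q.c.-extensions of the $\varphi_{s,t}$ and must manufacture a uniformly q.c.-extendible range-normalized Loewner chain. The natural construction is, for fixed $s$, to set $f_s:=\lim_{t\to\infty}\Psi_t\circ\varphi_{s,t}$ for a suitable normalizing sequence of Möbius maps $\Psi_t$ (chosen so that the limit exists and the range is $\C$ or a half-plane), analogous to the reconstruction $f_s=\lim e^t\varphi_{s,t}$ in the classical radial case. One then needs to check that each $f_s$ inherits a $k$-q.c. extension: the extension of $\varphi_{s,t}$ post-composed with the (conformal) normalization $\Psi_t$ is still $k$-q.c., and the issue is passing to the limit. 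Here one invokes the standard compactness/closure property of the family of $k$-q.c. maps of $\ComplexE$ (locally uniform limits of $k$-q.c. maps are $k$-q.c. or constant or, after normalization, homeomorphisms), together with a normalization that rules out degeneration — e.g., pinning down three boundary points or using the range normalization itself. Once one range-normalized Loewner chain is shown uniformly q.c.-extendible, the "hence any" clause follows because any other range-normalized Loewner chain associated with $(\varphi_{s,t})$ differs from it by post-composition with a single fixed Möbius transformation (both ranges being $\C$ or a half-plane, the conformal map between them is Möbius), which preserves $k$-q.c. extendibility to $\ComplexE$.

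The main obstacle I expect is the implication $(i)\Rightarrow(iii)$, specifically ensuring that the limiting Loewner chain is both well-defined (the normalizing Möbius maps must be chosen compatibly across all $s$, using EF2 to relate $\varphi_{s,t}$ and $\varphi_{u,t}$) and non-degenerate, so that the limit of the $k$-q.c. extensions is again a genuine $k$-q.c. automorphism rather than a constant or a non-injective map. This requires a careful equicontinuity/normal-families argument for the extensions on all of $\ComplexE$, not just on $\UH$ (resp.~$\UD$), using that a $k$-q.c. map of $\ComplexE$ fixing three points is controlled on the whole sphere. The remaining verifications — that $(f_t)$ satisfies LC1–LC3 and reproduces $\varphi_{s,t}$ — are routine, following the classical pattern, once the extensions are under control.
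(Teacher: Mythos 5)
Your proposal is correct and follows essentially the same route as the paper: the trivial implication (iii)$\Rightarrow$(ii), the composition argument with $k'=2k/(1+k^2)$ for (ii)$\Rightarrow$(i), and for (i)$\Rightarrow$(iii) the limit $f_s=\lim_{t\to+\infty}L_t\circ\varphi_{s,t}$ of M\"obius-normalized evolution maps combined with a compactness/closure property of $k$-q.c.\ extendible univalent maps. The only difference is one of bookkeeping: the paper does not re-derive the limiting construction or the non-degeneration you worry about, but simply cites the standard Loewner chain construction of \cite[Proof of Theorem~3.3]{MR2789373} and the compactness theorem \cite[Theorem 14.1]{Schober:1975}, which together discharge exactly the steps you flag as the main obstacle.
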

\begin{proof}
If $(f_t)$ is a Loewner chain such that $f_t$ is $k$-q.c. extendible for each~$t\ge0$, then clearly $\varphi_{s,t}=f_t^{-1}\circ f_s$ is $k'$-q.c. extendible with some $k'\le2k/(1+k^2)$ for each $s\ge0$ and $t\ge s$. Hence (ii) implies (i).

Now suppose that all $\varphi_{s,t}$'s are $k$-q.c. extendible. We use the construction of the standard Loewner chain~$(f_t)$ associated with~$(\varphi_{s,t})$ given in~\cite[Proof of Theorem~3.3]{MR2789373}. For each $s\ge0$, we have $f_s=\lim_{t\to+\infty}L_t\circ\varphi_{s,t}$ for a certain family~$(L_t)$ of M\"obius transformations of~$\ComplexE$. Taking into account that all $k$-q.c. extendible elements of a compact family of univalent holomorphic functions form a compact subfamily, see, e.g., \cite[Theorem 14.1 on p.\,148]{Schober:1975}, we see that $f_s$ is also $k$-q.c. extendible to~$\ComplexE$. This shows that~(i) implies~(iii). The proof is complete.
\end{proof}
\begin{remark}\label{RM_largerconstant}
Note that in the above proposition, the uniform q.c.-extendibility of an evolution family~$(\varphi_{s,t})$ implies that of the associated range-normalized Loewner chains~$(f_t)$ with the {\it same upper bound for the dilatation}. However, in the converse implication the upper bound of the dilatation for~$(\varphi_{s,t})$ must be {\it larger} than that for~$(f_t)$. This can be easily seen from the following example.

\begin{example}
Fix $k\in(0,1)$ and set $f_t(z):=a(t)z/(1-b(t)z^2)$ for all $z\in\UD$ and all~$t\ge0$, where $a(t):=1+t/(1-k)$ and $b(t):=\max\{-k,\,k-t\}$. Note that the functions $g_t(\zeta):=1/f_t(1/\zeta)$, $\zeta\in\Delta:=\ComplexE\setminus\overline\UD$, map the exterior of~$\UD$ conformally onto exteriors of a nested family of ellipses, from which it is easy to see $(f_t)$ is a Loewner chain. Moreover, for each $t\ge0$, $g_t$ extends to a $|b(t)|$-q.c. map of~$\ComplexE$ by setting~$g_t(\zeta):=(\zeta-b(t)\overline\zeta)/a(t)$ for all~$z\in\overline\UD$. Hence all~$f_t$'s are $k$-q.c. extendible, with $f_t(\infty)=\infty$. Now let $\varphi_{s,t}:=f_t^{-1}\circ f_s$, $t\ge s\ge0$. Elementary calculations show that the Schwarzian derivative of~$\varphi_{s,t}$ equals
$$
S\varphi_{s,t}=Sf_s-\big((Sf_t)\circ\varphi_{s,t}\big)(\varphi'_{s,t})^2.
$$
In particular, $S\varphi_{0,2k}(0)=12k(1+k^2)/(1+k)^2$. Using the upper estimate of $Sf(0)$ for q.c.-extendible normalized univalent holomorphic functions in~$\UD$~\cite{Kuehnau:1969, Lehto:1971}, see also \cite[Corollary~14.7 on p.\,152]{Schober:1975}, we conclude that $\varphi_{0,2k}$ can have a $k'$-q.c. extension to~$\ComplexE$ only if $k'\ge 2k(1+k^2)/(1+k)^2>k$.
\end{example}
In the general case, it is natural to expect that the best value of~$k'$ is $2k/(1+k^2)$. It, however, does not seem to be easy to find an example showing that this constant cannot be decreased.
\end{remark}

\begin{remark}
In~2007 Kuznetsov obtained a sufficient condition \cite[(9) in Theorem~2]{Kuznetsov} for the elements of an evolution family generated by the classical radial Loewner\,--\,Kufarev ODE to map the unit disk onto quasidisks with rectifiable boundary. Unfortunately the method used in~\cite{Kuznetsov} leads only to  a rough estimate for the dilatation ${K:=(1+k)/(1-k)}$, which explodes as $t\to+\infty$. Therefore, it seems worth to mention that, in fact, Kuznetsov's condition implies that the classical Herglotz function $p$ takes values in some compact set $K\subset\UH$ and hence, according to Becker's Theorem~\ref{TH_Beckerthm}, it implies that the corresponding evolution family and the Loewner chain are \textit{uniformly} q.c.-extendible. Indeed, the condition from~\cite{Kuznetsov} can be written as ${\partial \log\Re p(re^{i\theta},t)}/{\partial r}=O((1-r)^{-\alpha})$ as $r\to1^-$ uniformly w.r.t. $t\ge0$ and $\theta\in\Real$ for some~$\alpha\in(0,1)$. Under this condition, the family~$Q_r(e^{i\theta}):=\log\Re p(re^{i\theta},t)$, $\theta\in\Real$, converges uniformly as $r\to1^-$. Therefore, $\log\Re p$ and hence $q:=\Re p$ extend continuously to~$\overline\UD$, with $|q(re^{i\theta})-q(e^{i\theta})|\le C_1(1-r)^{1-\alpha}$ for all~$r\in(0,1)$, all~$\theta\in\Real$, and some constant~$C_1>0$.
In particular, $p(\UD)$ lies in a strip and hence~$p'(z)=O((1-|z|)^{-1})$. Combining the two estimates, we get $|q(e^{i\theta_2})-q(e^{i\theta_1})|\le 2C_1(1-r)^{1-\alpha}+C_2|\theta_2-\theta_1|/(1-r)$ for all $r\in(0,1)$, all~$\theta_1,\theta_2\in\Real$, and some constant~$C_2>0$. Optimizing w.r.t.~$r\in(0,1)$ leads to the conclusion that~$q$ is $\frac{1-\alpha}{2-\alpha}$-H\"older continuous. Using the Herglotz representation formula for~$p$ and \cite[Proposition~3.4]{Pommerenke:BoundBehaConfMaps}, we finally see that the function~$p$ itself extends continuously to~$\overline\UD$.
\end{remark}

    %
    \subsection{Main result and its proof}
    %
\label{SS_maintheorem}

In this section we prove the chordal analogue of Becker's Theorem~\ref{TH_Beckerthm}, i.e. Theorem~\ref{TH_main-thrm-in-short}, which can be formulated in a more detail as follows.
\begin{theorem}

            \label{TH_alldetails}

Let $(f_t)$ be a Loewner chain of chordal type with associated Herglotz function~$p$.
Suppose that there exists a constant $k\in[0,1)$ such that
\begin{equation}\label{EQ_U(k)1}
    p(z,t) \in U(k)\quad\text{for all $z\in\UH$ and a.e.~$t\ge 0$}.
\end{equation}
Then:
\begin{enumerate}
\def\labelenumi{(\roman{enumi})}
\item $f_t$ has a continuous extension to~$i\Real$ for all $t\ge0$;
\item moreover, for all $t\ge 0$, $f_t(\UH)$ can be $k$-q.c. extended to~$\ComplexE$ by setting $f_t(\infty):=\infty$ and
$f_t(-x+iy,t):= f_{t+x}(iy)$ for all $x>0$ and all $y\in\Real$;
\item all elements of the evolution family~$(\varphi_{s,t})$ associated with~$(f_t)$ are also $k$-q.c. extendible to~$\ComplexE$ with $\varphi_{s,t}(\infty)=\infty$ for all~$s\ge0$ and $t\ge s$.
\end{enumerate}
\end{theorem}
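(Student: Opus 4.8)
The plan is to construct the quasiconformal extensions by hand, following Becker's radial recipe but with the dilation variable $\log|z|$ replaced by $-\Re w$ and the unit circle replaced by the line $i\Real=\de\UH$, and then to read off the complex dilatation of the resulting maps directly from the chordal Loewner\,--\,Kufarev equations \eqref{EQ_ODE-chordal-type}\,--\,\eqref{EQ_PDE-chordal-type}.

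\emph{Step~1 (boundary behaviour; part~(i)).} Condition~\eqref{EQ_U(k)1} says exactly that $p(z,\tau)$ lies in a fixed compact subset of $\UH$ for all $z\in\UH$ and a.e.\ $\tau\ge0$, i.e.\ $m:=\tfrac{1-k}{1+k}\le\Re p(z,\tau)\le\tfrac{1+k}{1-k}$ and $|\Im p(z,\tau)|\le\tfrac{2k}{1-k^2}$; in particular $p$ and $1/p$ are bounded. Integrating the ODE~\eqref{EQ_ODE-chordal-type} gives $\Re\varphi_{s,t}(z)\ge\Re z+(t-s)m$, so $\varphi_{t,t+1}$ maps $\UH$ into $\{\Re w\ge m\}$, a region on which $f_{t+1}$ is holomorphic with locally bounded derivative; since $f_t=f_{t+1}\circ\varphi_{t,t+1}$, the behaviour of $f_t$ near $i\Real$ is governed by that of $\varphi_{t,t+1}$, which the bound on $1/p$ tames. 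More substantially, one has to show that each image domain $f_t(\UH)$ has a boundary that is a Jordan curve in $\ComplexE$ through $\infty$: in the autonomous model $f_t=h-t$ with $h'=1/p$ one has $\Re h'\ge m>0$, so $\de h(\UH)$ is the graph of a Lipschitz function over the imaginary axis, and the general case is handled similarly by controlling the rate and direction of motion of the free boundary. By Carath\'eodory's theorem this yields a homeomorphic extension of $f_t$ to $\overline\UH\cup\{\infty\}$ with $f_t(\infty)=\infty$ and, at the same time, that $\Omega:=\bigcup_{t\ge0}f_t(\UH)=\C$ (subtracting leftward rays from a domain bounded by a graph over the imaginary axis fills the plane), so that the $F_t$ below is a self-map of $\ComplexE$. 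This is the step I expect to be the main obstacle, since boundedness of $p$ alone gives no boundary regularity and one must exploit the directional information carried by $U(k)$.

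\emph{Step~2 (the extension $F_t$; part~(ii)).} Granted Step~1, set $F_t:=f_t$ on $\UH$, $F_t(\infty):=\infty$, and $F_t(w):=f_{t-\Re w}(i\,\Im w)$ for $\Re w<0$, where $f_\tau(iy)$ is the boundary value from Step~1. Using Step~1 and the strict nesting $\overline{f_s(\UH)}\subset f_t(\UH)$ for $s<t$ one checks that $F_t$ is a homeomorphism of $\ComplexE$ fixing $\infty$: on $\{\Re w<0\}$ it is an injective parametrisation, by $(\tau,y)$, of the pairwise disjoint free boundary curves $f_\tau(i\Real)$, $\tau>t$, which together with $f_t(\UH)$ and $f_t(i\Real)$ exhaust $\C$. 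On $\UH$, $F_t$ is conformal; on $\{\Re w<0\}$, writing $\tau=t-\Re w$, $\beta=\Im w$ and differentiating $F_t(w)=f_\tau(i\beta)$ by the chain rule and the PDE~\eqref{EQ_PDE-chordal-type} (using the a.e.\ existing boundary values of $p(\cdot,\tau)$ and $f_\tau'$), one gets
\[
\de_w F_t=\tfrac12\,f_\tau'(i\beta)\bigl(p(i\beta,\tau)+1\bigr),\qquad \de_{\bar w}F_t=\tfrac12\,f_\tau'(i\beta)\bigl(p(i\beta,\tau)-1\bigr),
\]
so that the complex dilatation $\mu_{F_t}:=\de_{\bar w}F_t/\de_w F_t$ equals $\dfrac{p(i\beta,\tau)-1}{p(i\beta,\tau)+1}$, of modulus $\le k$ because $p(i\beta,\tau)\in U(k)$ (the inclusion survives passage to the boundary as $U(k)$ is closed). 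Hence $F_t$ is conformal on $\UH$, is ACL with $|\mu_{F_t}|\le k$ a.e.\ on $\{\Re w<0\}$, and is continuous across the analytic curve $i\Real\cup\{\infty\}$; since this curve has measure zero (alternatively, by removability of analytic arcs, see \cite{LehtoVirtanen:1973}), $F_t$ is $k$-quasiconformal on $\ComplexE$. This establishes (i) and (ii).

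\emph{Step~3 (the evolution family; part~(iii)).} The naive estimate $\varphi_{s,t}=f_t^{-1}\circ f_s=F_t^{-1}\circ F_s$ (composition of two $k$-q.c.\ maps) only yields dilatation $\le 2k/(1+k^2)$, so one must exploit that $F_s$ and $F_t$ are compatible. From their definitions and $f_\sigma=f_t\circ\varphi_{\sigma,t}$ ($s\le\sigma\le t$) one obtains the explicit formula
\[
(F_t^{-1}\circ F_s)(w)=\begin{cases}\varphi_{s,t}(w), & \Re w\ge0,\\[2pt]\varphi_{s-\Re w,\,t}(i\,\Im w), & s-t<\Re w<0,\\[2pt]w+(t-s), & \Re w\le s-t,\end{cases}
\]
a continuous extension of $\varphi_{s,t}$ to $\ComplexE$ fixing $\infty$, the pieces matching at $\Re w=0$ and, because $\varphi_{t,t}=\id_\UH$, at $\Re w=s-t$. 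The first and third pieces are conformal; on the middle strip $\varphi_{s-\Re w,\,t}(i\,\Im w)\in\UH$ (since $\Re\varphi_{\sigma,t}(iy)\ge(t-\sigma)m>0$ for $\sigma<t$), so there $F_t^{-1}\circ F_s=f_t^{-1}\circ F_s$ with $f_t^{-1}$ conformal, whence its complex dilatation equals $\mu_{F_s}$ and has modulus $\le k$ by Step~2. Thus $F_t^{-1}\circ F_s$ is conformal on $\UH$, ACL off the two analytic arcs $i\Real$ and $\{\Re w=s-t\}$, and has dilatation $\le k$ a.e., hence is a $k$-quasiconformal extension of $\varphi_{s,t}$ with fixed point $\infty$, proving (iii). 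Beyond Step~1, the only points needing care are the ACL property and the matching of the three pieces along the strip boundaries.
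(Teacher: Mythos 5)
Your Steps 2--3 contain the right formal computations (the Beltrami coefficient $\bigl(p(i\beta,\tau)-1\bigr)/\bigl(p(i\beta,\tau)+1\bigr)$, and the observation that on the middle strip $F_t^{-1}\circ F_s=f_t^{-1}\circ F_s$ with $f_t^{-1}$ conformal, which is what keeps the constant $k$ in (iii) --- your explicit formula for $F_t^{-1}\circ F_s$ is exactly the extension $g_{-\Re w}(i\,\Im w)$ used in the paper). But the proof as written has a genuine gap, and it is precisely the one you flag yourself: Step~1. The assertion that in the non-autonomous case each $\partial f_t(\UH)$ is a Jordan curve through $\infty$ (indeed a Lipschitz-type graph over $i\Real$) is not proved; ``controlling the rate and direction of motion of the free boundary'' is a program, not an argument, and nothing in the hypotheses gives you boundary regularity of $f_t$ for free. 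Moreover, everything downstream leans on further unestablished facts: a.e.\ existence of nontangential boundary values of $f_\tau'$ and of $p(\cdot,\tau)$ on $i\Real$ together with the validity of the Loewner--Kufarev PDE \emph{on the boundary} (needed for your chain-rule computation of $\partial_w F_t$, $\partial_{\bar w}F_t$); absolute continuity of $\tau\mapsto f_\tau(iy)$ for a.e.\ $y$ (needed for the ACL property of $F_t$ in the horizontal direction); and the strict nesting $\overline{f_s(\UH)}\subset f_t(\UH)$ with disjoint boundary curves exhausting $\C$ (needed for $F_t$ to be a homeomorphism). None of these is automatic, and statement (i) of the theorem --- continuity of $f_t$ up to $i\Real$ --- is itself one of the conclusions, so it cannot be fed in as an input without proof.

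The paper's proof is organized exactly so as to avoid this obstacle. Instead of welding along $i\Real$, one shifts by $\rho>0$ and welds along the interior line $\{\Re z=\rho\}$: for (iii) one introduces the auxiliary family $g_a$ (equal to $\varphi_{s+a,t}$ for $a\le t-s$ and to a translation afterwards) and the interpolating maps $\psi_{a,b}$ with DW-point at $\infty$, which give injectivity of the glued map $\varphi^\rho_{s,t}$ on all of $\C$; the dilatation bound is then read off from the ODE/PDE at the \emph{interior} points $\rho+iy$, where absolute continuity in the time variable and the equation itself are already known from the general theory. Letting $\rho\to0^+$ and invoking compactness of suitably normalized $k$-q.c.\ self-maps of $\C$ produces the $k$-q.c.\ extension and, as a by-product, the continuous extension of $\varphi_{s,t}$ and $f_t$ to $i\Real$; that is, part (i) comes out of the construction rather than going into it. To repair your proposal you would either have to supply the boundary-regularity and boundary-value results your Steps 1--2 presuppose (a substantial task), or adopt some version of the shift-and-limit device.
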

An explicit $k$-q.c. extension for~$\varphi_{s,t}$ will be given in the proof of the above theorem.
Note also that (iii) does not follow from~(ii), see Remark~\ref{RM_largerconstant}.

\begin{remark}
In the modern literature \textsl{decreasing} analogues of Loewner chains are often considered, see, e.g., \cite{Lawler:book}. In~\cite{contreraslocalduality} a general definition of a decreasing Loewner chain in~$\UD$ has been given. Passing with the help of the Cayley map to the half-plane~$\UH$ and combining assertion~(iii) of the above theorem with \cite[Theorems~4.1 and~4.2]{contreraslocalduality}, we can easily see that \textit{if $p$ is a Herglotz function in~$\UH$ satisfying~\eqref{EQ_U(k)1}, then the (unique) decreasing Loewner chain~$(g_t)$ corresponding to the vector field $G:=p$, has the following property: for each $t\ge0$ the function $g_t$ has a $k$-q.c. extension to~$\ComplexE$ with a fixed point at~$\infty$.}
\end{remark}


\begin{proof}[\textbf{Proof of Theorem \ref{TH_alldetails}}]

We start with the proof of~(iii). Fix arbitrary $s\ge0$ and $t>s$.
For all $z\in\UH$ and $a\ge0$ we define
$$%
g_{a}(z):=\left\{%
\begin{array}{ll}
\varphi_{s+a,t}(z),& \text{if~}0 \le a\le t-s,\\[.5ex]
z+t-s-a,& \text{if~}a>t-s.
\end{array}
\right.$$%
According to \cite[Proposition~3.7, Theorem~6.6]{BracciCD:evolutionI}, for any fixed $z\in\UH$, the map $a\mapsto g_{a}(z)$ is absolutely continuous on $[0,+\infty)$ and
\begin{equation}\label{EQ_dg/da}
\frac{\partial g_{a}(z)}{\partial a}=-g'_{a}(z)p_t(z,s+a)\quad\text{for a.e. $a\ge0$},
\end{equation}
where $p_t(\cdot,\xi):=p(\cdot,\xi)$ if $0\le\xi\le t$ and $p_t(\cdot,\xi)\equiv1$ for all~$\xi\ge t$. Moreover, for if $b\ge a\ge0$, then $g_a=g_b\circ\psi_{a,b}$, where $\psi_{a,b}\in\Hol(\UH,\UH)$ is given by the formula
$$%
\psi_{a,b}(z):=\left\{%
\begin{array}{ll}
\varphi_{s+a,s+b}(z),& \text{if~}b\le t-s,\\[.5ex]
\varphi_{s+a,t}(z)+b-(t-s),& \text{if~}a\le t-s<b,\\[.5ex]
z+b-a,& \text{if~}a>t-s.
\end{array}
\right.$$%
Note that $\psi_{a,b}\neq\id_\UH$ whenever $b>a\ge0$ and that the Denjoy\,--\,Wolff point of~$\psi_{a,b}$ is~$\infty$. Since $g_b$ is univalent in~$\UH$, it follows that, if $g_b(z_2)=g_a(z_1)$ for some $b>a$ and $z_1,z_2\in\UH$, then $\Re z_2>\Re z_1$. Using this fact, it is easy to show that for any $\rho>0$,
\begin{equation}%
\label{rhophi}
\varphi^\rho_{s,t}(x+iy):=\left\{%
\begin{array}{ll}
g_0(x+\rho+iy),& \text{for~}x>0,~~y\in\Real,\\[.5ex]
g_{-x}(\rho+iy),& \text{for~}x\le0,~~y\in\Real.
\end{array}\right.
\end{equation}%
is an injective map of $\Complex$ into itself. Note also that $\varphi^\rho_{s,t}(z)=\varphi_{s,t}(z+\rho)$ for all~$z\in\UH$. Condition~\eqref{EQ_U(k)1} and equality~\eqref{EQ_dg/da} imply that for all $z\in\Complex$ with $\Re z<0$,
$$
\left|\frac{\partial_{\overline{z}} \varphi_{s,t}^\rho(z)}{\partial_{z} \varphi_{s,t}^\rho(z)}\right|
=
\left|\frac
    {\partial_{x} \varphi_{s,t}^\rho(x + iy) + i \partial_{y} \varphi_{s,t}^\rho(x + iy)}
    {\partial_{x} \varphi_{s,t}^\rho(x + iy) - i \partial_{y} \varphi_{s,t}^\rho(x + iy)}\right|
=
\left|\frac{p_t(\rho+iy,s-x)-1}{p_t(\rho+iy,s-x)+1}\right|\le k.
$$
Taking into account the local absolute continuity of $a\mapsto g_a(z)$ for every fixed $z\in\UH$,  we may now conclude that for any $\rho>0$ the function $\varphi^\rho_{s,t}$ is a $k$-q.c. extension of~$\UH\ni z\mapsto\varphi_{s,t}(z+\rho)$ to~$\Complex$.

Now using the compactness property of normalized $k$-q.c. self-map of~$\Complex$ (see, e.g., \cite[p.74]{LehtoVirtanen:1973}) we can conclude that $\varphi_{s,t}$ has a continuous extension to $i\Real$ and that $\varphi_{s,t}^\rho$ tends, as $\rho\to0^+$, to the $k$-q.c. extension of~$\varphi_{s,t}$ defined for all~$z\in\Complex\setminus\overline{\UH}$ by $\varphi_{s,t}(z):=g_{-\Re z}(i\,\Im z)$. It remains to mention that any q.c. self-map of~$\Complex$ extends to a q.c. automorphism of~$\ComplexE$ simply by setting~$\varphi_{s,t}(\infty)=\infty$.

The proof of (i) and~(ii) follows the same way in reasoning. We fix~$\rho>0$ and define
\begin{equation}%
\label{rhoft}
f^\rho_{t}(x+iy):=\left\{%
\begin{array}{ll}
f_t(x+\rho+iy),& \text{for~}x>0,~~y\in\Real,\\[.5ex]
f_{t-x}(\rho+iy),& \text{for~}x\le0,~~y\in\Real.
\end{array}\right.
\end{equation}%
Since $(t,z)\mapsto f_t(z)$ solves the Loewner PDE $\mydot{f_t}(z)=-f_t'(z)p(z,t)$,  for all~$z\in\UH$ and
a.e.~$t\ge0$, from~\eqref{EQ_U(k)1} we deduce that $f^\rho_t$ is a $k$-q.c. extension of~$z\mapsto f_t(z+\rho)$
to~$\Complex$.

Exactly the same argument as we have used for the family~$\varphi_{s,t}^\rho$ shows that $f_t$
extends continuously to~$i\Real$ and that it further can be extended to a $k$-q.c. automorphism
of~$\ComplexE$ by setting $f_t(\infty)=\infty$ and $f_t(z)=f_{t-\Re z}(i\,\Im z)$ for all
$z\in\Complex\setminus\overline\UH$.
\end{proof}


A direct corollary of Theorem~\ref{TH_alldetails} is that under condition~\eqref{EQ_U(k)1} the
\textsl{Loewner range} $\cup_{t\ge0}f_t(\UH)$ coincides with the whole plane~$\Complex$. Below
we show that the same conclusion can be deduced under a weaker assumption upon the Herglotz
function~$p$.

\begin{proposition}

            \label{PR_fill_allC_chordal}

Let $(f_t)$ be a Loewner chain of chordal type in~$\UH$. Suppose that the Herglotz  function~$p$ associated
with~$(f_t)$ satisfies
\begin{equation}\label{EQ_weaker-cond}
C_1<\Re p(z,t)<C_2\quad\text{ for all $z\in\UH$ and a.e.~$t\ge0$,}
\end{equation}
where $C_1,C_2\in(0,+\infty)$ are constants. Then the Loewner range of $(f_t)$, i.e. the set
$\cup_{t\ge0}f_t(\UH)$, coincides with~$\Complex$.
\end{proposition}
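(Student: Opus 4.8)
The strategy is to exploit the chordal Loewner--Kufarev PDE \eqref{EQ_PDE-chordal-type}, $\de_t f_t(z)=-p(z,t)f_t'(z)$, to track how fast the image domains $f_t(\UH)$ expand in the direction of the negative real axis. First I would fix a base point, say $z_0=1\in\UH$, and estimate the trajectory $t\mapsto f_t(z_0)$. Since $\varphi_{s,t}=f_t^{-1}\circ f_s$ has Denjoy--Wolff point $\infty$, the Julia--Wolff--Carath\'eodory inequality \eqref{EQ_JWC} gives $\Re\varphi_{s,t}(z)\ge\Re z$, so $\Re\varphi_{s,t}(z_0)$ is nondecreasing in $t$; combined with \eqref{EQ_ODE-chordal-type}, $\tfrac{d}{dt}\varphi_{s,t}(z_0)=p(\varphi_{s,t}(z_0),t)$, the upper bound $\Re p<C_2$ shows $\Re\varphi_{s,t}(z_0)\le\Re z_0+C_2(t-s)$, while the lower bound $\Re p>C_1$ shows $\Re\varphi_{s,t}(z_0)\ge \Re z_0+C_1(t-s)$. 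Thus the solutions of the ODE drift to the right at a linear rate controlled from both sides.

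Next I would translate this into a statement about the Loewner chain itself. The key point is the relation $f_s(z)=f_t(\varphi_{s,t}(z))$, equivalently $f_t(\varphi_{s,t}(z))=f_s(z)$, which shows $f_s(\UH)=f_t(\varphi_{s,t}(\UH))\subset f_t(\UH)$, and more usefully, that for fixed $z$ the point $w:=f_0(z)\in f_t(\UH)$ has $f_t$-preimage $\varphi_{0,t}(z)$ with real part growing like $C_1 t$. So the boundary $\de f_t(\UH)$, viewed through the chart $f_t^{-1}$, lies over $\de\UH=i\Real$; a point near that boundary corresponds to a small value of $\Re f_t^{-1}$. Since preimages of a fixed compact set move to the right at speed at least $C_1$, after time $t$ they are at distance $\gtrsim C_1 t$ from $i\Real$ inside $\UH$. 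To turn ``far from the boundary in the source'' into ``the image exhausts $\Complex$'', I would use a Koebe-type distortion/growth estimate for the univalent map $f_t$: control $|f_t(z)-f_t(z_0)|$ from below in terms of $\operatorname{dist}(z_0,\de\UH)$ and $|f_t'(z_0)|$, together with a lower bound on $|f_t'(z_0)|$ obtained by differentiating the PDE (or from $\log|f_t'(z_0)|=\log|f_0'(\varphi_{0,t}^{-1}\ \text{side})| - \Re\!\int_0^t \de_z p\,dt$, handled via the Herglotz bound on $\Re p$ and the Borel--Carath\'eodory / Harnack inequality for $\Re p>C_1$). Alternatively, and perhaps more cleanly, I would argue directly: any $w\in\Complex$ lies in $f_t(\UH)$ as soon as $t$ is large, because $f_t(\UH)\supset f_0(\UH)$ always, and the ``new'' part $f_t(\UH)\setminus f_0(\UH)$ corresponds under $f_t$ to a half-plane-like region $\varphi_{0,t}(\UH)$ whose complement in $\UH$ is the strip-like set $\{0<\Re z<\Re\varphi_{0,t}(\cdot)\}$ of width $\gtrsim C_1 t\to\infty$, and a conformal image of half-planes nested with linearly growing ``thickness'' must fill $\Complex$.

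The main obstacle is the last step: controlling the \emph{size} of the images $f_t(\UH)$ in $\Complex$, not merely the size of the preimages in $\UH$. Knowing $\varphi_{0,t}(\UH)$ sits deep inside $\UH$ does not by itself force $f_t(\UH)$ to be large unless one also controls the derivative $f_t'$, which can a priori degenerate. I expect the cleanest route is to use that, by \eqref{EQ_JWC} applied to $\varphi_{s,t}$ with DW-point $\infty$, one has $\varphi_{s,t}'(\infty)\ge1$ and in fact the lower bound $\Re p>C_1$ upgrades this to $\varphi_{s,t}'(\infty)\ge$ something bounded below, so the conformal radii of $f_t(\UH)$ (measured at a fixed interior point of $f_0(\UH)$) grow at least like $e^{cC_1 t}$ or at least linearly; applying the Koebe $1/4$-theorem in the half-plane then forces the images to exhaust $\Complex$. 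I would carry out these estimates carefully, but the high-level outline is: (1) ODE drift estimate from \eqref{EQ_ODE-chordal-type} and \eqref{EQ_weaker-cond}; (2) derivative/conformal-radius lower bound from the PDE \eqref{EQ_PDE-chordal-type} and Harnack for $\Re p>C_1$; (3) Koebe distortion to conclude $\bigcup_t f_t(\UH)=\Complex$.
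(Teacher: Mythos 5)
Your step (1) — the drift estimates $\Re z_0+C_1(t-s)\le\Re\varphi_{s,t}(z_0)\le\Re z_0+C_2(t-s)$ from \eqref{EQ_ODE-chordal-type} — is correct and is also how the paper's proof begins, and you rightly flag the real obstacle: depth of the preimages $\varphi_{0,t}(\UH)$ inside $\UH$ says nothing about the size of $f_t(\UH)$ unless the derivative is controlled. (Indeed your ``perhaps more cleanly'' nested-half-planes argument is refuted by the paper's own Remark~\ref{RM_essential-both-ineq}: for $p_1(z,t)=z+1$ one has $\varphi_{0,t}(\UH)=\{\Re w>e^t-1\}$, yet the Loewner range is only a strip.) The gap lies in the mechanism you propose to overcome this obstacle. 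First, under the hypotheses of the proposition one has $\varphi_{s,t}'(\infty)\equiv1$: combining \eqref{EQ_JWC} with the upper drift bound $\Re\varphi_{s,t}(x)\le x+C_2(t-s)$ and letting $x\to+\infty$ forces $\varphi_{s,t}'(\infty)\le1$, so the lower bound $\Re p>C_1$ cannot ``upgrade'' the angular derivative at $\infty$, and this quantity in any case does not control the conformal radius of $f_t(\UH)$ at a fixed point. Second, the quantity that must blow up is that conformal radius at $w_0:=f_0(1)$, which by the chain rule $f_0'(1)=f_t'(\varphi_{0,t}(1))\,\varphi_{0,t}'(1)$ equals $2|f_0'(1)|\,\Re\varphi_{0,t}(1)/|\varphi_{0,t}'(1)|$; equivalently one needs $\alpha(t):=\log\bigl(|\varphi_{0,t}'(1)|/\Re\varphi_{0,t}(1)\bigr)\to-\infty$, which is exactly the criterion of \cite[Theorem~1.6]{MR2789373} invoked in the paper. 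Plain Schwarz--Pick for $p(\cdot,t):\UH\to\UH$ only makes $\alpha$ non-increasing, which is not enough. (Also note that your identity for $\log|f_t'(z_0)|$ at a \emph{fixed} $z_0$ is not valid; the clean identity $\log|\varphi_{0,t}'(1)|=\Re\int_0^t p'(\varphi_{0,s}(1),s)\,ds$ holds along the characteristic.)

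The missing idea, which is the heart of the paper's argument, is a \emph{strict} Schwarz--Pick gain coming from the fact that $p(\cdot,t)$ maps $\UH$ into a proper subregion of $\UH$: the paper uses the strip $\Pi=\{C_1<\Re w<C_2\}$ to get $|p'(z,t)|\le\kappa\,\Re p(z,t)/\Re z$ with a fixed $\kappa<1$ (inequality \eqref{EQ_p_UH}); a rigorous version of the ``Harnack for $\Re p>C_1$'' you allude to would be Schwarz--Pick applied to $p-C_1$, giving $|p'(z,t)|\le(\Re p(z,t)-C_1)/\Re z$, which works just as well. Either estimate, inserted into $\alpha'(s)=\Re p'(\varphi_{0,s}(1),s)-\Re p(\varphi_{0,s}(1),s)/\Re\varphi_{0,s}(1)$ together with your drift bounds, yields $\alpha'(s)\le-cC_1/(1+C_2 s)$ for some $c>0$, so $\alpha(t)\to-\infty$ (only logarithmically: the conformal radius grows like a power of $t$, not exponentially or linearly as you guessed — the rate is harmless, but the strict gain is indispensable, and it uses both constants: $C_1$ for the gain, $C_2$ to keep $\Re\varphi_{0,s}(1)$ at most linear). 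Since your write-up neither states this estimate nor any substitute for it, and the concrete route you do spell out (via $\varphi_{s,t}'(\infty)$) provably gives nothing here, the proposal has a genuine gap at precisely the step you identified as the main obstacle.
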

\begin{proof}[\textbf{Proof}]
As it is easy to see (e.g. using the explicit formulas), there exists $\kappa\in(0,1)$  such that
$\lambda_\UH(z)<\kappa\lambda_\Pi(z)$ for all~$z\in\Pi:=\{z\colon C_1<\Re z<C_2\}$, where
$\lambda_D$ stands for the density of the hyperbolic metric in a domain~$D$. Since $p$ is non-expanding as a map
from~$\UH$ to $\Pi$ endowed with the corresponding hyperbolic metrics, it follows that
\begin{equation}\label{EQ_p_UH}
\frac{\Re p'(z)}{\Re p(z)}\le
\frac{|p'(z)|}{\Re p(z)}\le\frac{\kappa}{\Re z}\quad \text{for all $z\in\UH$}.
\end{equation}

Let $(\varphi_{s,t})$ stand for the evolution family associated with~$(f_t)$. Then
it satisfies the Loewner\,--\,Kufarev ODE of the form $(d/dt) \varphi_{s,t}(z)=p(\varphi_{s,t}(z),t)$, from which we get
\begin{equation}\label{EQ_alpha}
\alpha(t):=\log\frac{|\varphi_{0,t}'(1)|}{\Re \varphi_{0,t}(1)}=2\int_0^t\left(\Re p'\big(\varphi_{0,s}(1),s\big)- \frac{\Re p\big(\varphi_{0,s}(1),s\big)}{\Re \varphi_{0,s}(1)}\right)\,\mathrm ds
\end{equation}for all $t\ge0$.
Again using the Loewner ODE, from inequality~\eqref{EQ_weaker-cond} it follows that
\begin{equation}\label{EQ_growth-of-Phi}
\frac{\Re p\big(\varphi_{0,s}(1),s\big)}{\Re \varphi_{0,s}(1)}\ge \frac{C_1}{2tC_2}\quad\text{for a.e. $s\ge0$}.
\end{equation}
Combining \eqref{EQ_alpha} with the estimates \eqref{EQ_p_UH} and~\eqref{EQ_growth-of-Phi}, we conclude that $\alpha(t)\to-\infty$ as $t\to+\infty$. Hence, by \cite[Theorem~1.6]{MR2789373}, $\cup_{t\ge0}f_t(\UH)=\Complex$.
\end{proof}

\begin{remark}\label{RM_essential-both-ineq}
Two simple examples of chordal Loewner chains in~$\UH$  with the Loewner range different from~$\Complex$,
namely $f^{1}_t(z):=-t+\log(z+1)$ with associated Herglotz function $p_1(z,t):=z+1$ and $f^2_t(z):=z-\arctan t$
with associated Herglotz function $p_2(z,t):=(1+t^2)^{-1}$, show that both inequalities of condition~\eqref{EQ_weaker-cond} are essential in Proposition~\ref{PR_fill_allC_chordal}.
\end{remark}

\begin{remark}\label{RM_applyAut}
 If $p$ is a Herglotz function in~$\UH$, then $\lambda(t):=\int_{0}^tp'(\infty,\xi)\,\mathrm d\xi$ is well-defined for all~$t\ge0$, see \cite[proof of Theorem~7.1, p.\,29--30]{BracciCD:evolutionI}, and moreover, according to the Julia\,--\,Wolff\,--\,Carath\'eorody Theorem  for the half-plane, see, e.g.~\cite[Ch.\,IV~\S26]{Valiron},  $\tilde p(z,t):=e^{-\lambda(t)}p(e^{\lambda(t)} z,t)-p'(\infty,t)z$, defined for all $z\in\UH$ and~$t\ge0$, is also a Herglotz function in~$\UH$. Furthermore, it is not difficult to see that if~$(\varphi_{s,t})$ is the evolution family associated to~$p$, then the functions $\tilde\varphi_{s,t}(z):=e^{-\lambda(t)}\varphi_{s,t}(e^{\lambda(s)} z)$ form the evolution family associated with~$\tilde p$. Finally, if $(\tilde f_t)$ is a Loewner chain associated with~$(\tilde\varphi_{s,t})$, then using~\cite[Lemma~3.2]{MR2789373} we see that the functions~$f_t(z):=\tilde f_t(e^{-\lambda(t)}z)$ form a Loewner chain associated with~$(\varphi_{s,t})$. Since trivially $f_t(\UH)=\tilde f_t(\UH)$ for all~$t\ge0$, it follows that the conclusion of Proposition~\ref{PR_fill_allC_chordal} holds also under a weaker condition that~\eqref{EQ_weaker-cond} takes place with $\tilde p$ substituted for~$p$.
\end{remark}

\subsection{Case of unbounded Herglotz function}\label{SS_case-of-unbdd}
Using the trick of Remark~\ref{RM_applyAut}, from Theorem~\ref{TH_alldetails} we deduce the following assertion.

\begin{corollary}\label{C_qc-ext}
Let $p$ be a Herglotz function in~$\UH$ and let $k\in[0,1)$. Suppose that there exist measurable functions $\alpha:[0,+\infty)\to[0,+\infty)$ and $\beta:[0,+\infty)\to\Real$ such that
\begin{equation}\label{EQ_U(k)-unbdd}
\alpha(t)\big[p(z,t)-p'(\infty,t)z-i\beta(t)\big]\in U(k)\quad\text{for all~$z\in\UH$ and a.e.~$t\ge0$}.
\end{equation}
Then:
\begin{itemize}
\item[(i)] elements of the chordal evolution family~$(\varphi_{s,t})$ associated with~$p$ are $k$-q.c. extendible to~$\ComplexE$;
\item[(ii)] elements of any range-normalized chordal Loewner chain~$(f_t)$ associated with~$p$ are $k$-q.c. extendible to~$\ComplexE$.
\end{itemize}
\end{corollary}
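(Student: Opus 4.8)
The plan is to reduce the unbounded case to Theorem~\ref{TH_alldetails} by applying the normalization trick of Remark~\ref{RM_applyAut} in a slightly modified form that also absorbs the imaginary shift $i\beta(t)$ and the rescaling $\alpha(t)$. First I would set $\lambda(t):=\int_0^t p'(\infty,\xi)\,\mathrm d\xi$ (well-defined by \cite[proof of Theorem~7.1]{BracciCD:evolutionI}) and $B(t):=\int_0^t\beta(\xi)\,\mathrm d\xi$, and introduce the auxiliary Herglotz function
\begin{equation*}
\tilde p(z,t):=\alpha(t)\big[e^{-\lambda(t)}p(e^{\lambda(t)}z,t)-p'(\infty,t)z-i\beta(t)\big].
\end{equation*}
Condition~\eqref{EQ_U(k)-unbdd} says precisely that $\tilde p(z,t)\in U(k)$ for all $z\in\UH$ and a.e.~$t\ge0$, so provided $\tilde p$ is genuinely a Herglotz function in~$\UH$, Theorem~\ref{TH_alldetails} applies to it. The Julia\,--\,Wolff\,--\,Carath\'eodory inequality~\eqref{EQ_JWC} guarantees that $\Re\big(e^{-\lambda(t)}p(e^{\lambda(t)}z,t)-p'(\infty,t)z\big)\ge0$, exactly as in Remark~\ref{RM_applyAut}; the extra subtraction of $i\beta(t)$ does not affect the real part, and multiplication by $\alpha(t)\ge0$ preserves $\Re\ge0$. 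Measurability in~$t$ and local integrability in~$t$ for fixed~$z$ follow from the corresponding properties of $p$, $p'(\infty,\cdot)$, $\alpha$, $\beta$ together with the local absolute continuity of $\lambda$ (one should note here that $U(k)$ is compact, so once $\tilde p$ takes values in $U(k)$ its local integrability is automatic).

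Next I would transfer the conclusion back. Let $(\tilde\varphi_{s,t})$ be the chordal evolution family associated with~$\tilde p$ and let $(\tilde f_t)$ be a range-normalized chordal Loewner chain associated with it. By Theorem~\ref{TH_alldetails}, every $\tilde\varphi_{s,t}$ and every $\tilde f_t$ has a $k$-q.c. extension to~$\ComplexE$ fixing~$\infty$. Now I claim that the chordal evolution family $(\varphi_{s,t})$ associated with~$p$ and the original (range-normalized) Loewner chain $(f_t)$ are obtained from $(\tilde\varphi_{s,t})$, $(\tilde f_t)$ by post- and pre-composition with the affine maps $z\mapsto e^{\lambda(t)}z$, $z\mapsto e^{\lambda(t)}z+\text{(imaginary constant)}$; concretely I expect
\begin{equation*}
\varphi_{s,t}(z)=e^{\lambda(t)}\,\tilde\varphi_{s,t}\!\big(e^{-\lambda(s)}z\big)+i\big(B(t)-\text{something}\big),
\qquad
f_t(z)=\tilde f_t\!\big(e^{-\lambda(t)}z\big)+i\,\gamma(t),
\end{equation*}
for suitable real $\gamma(t)$, and I would verify this by plugging into the chordal Loewner\,--\,Kufarev ODE~\eqref{EQ_ODE-chordal-type} and PDE~\eqref{EQ_PDE-chordal-type} and using uniqueness (as in \cite[Lemma~3.2]{MR2789373}). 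Since affine maps of the form $z\mapsto az+b$ with $a>0$ are conformal automorphisms of~$\ComplexE$ fixing~$\infty$, and since $k$-quasiconformality is invariant under pre- and post-composition with conformal maps, $\varphi_{s,t}$ and $f_t$ inherit $k$-q.c. extensions to~$\ComplexE$ fixing~$\infty$. Finally, any other range-normalized chordal Loewner chain associated with~$p$ differs from $(f_t)$ by a single M\"obius transformation of~$\Omega:=\cup_t f_t(\UH)$ (which is~$\C$ or a half-plane), and such a M\"obius map is again conformal on~$\ComplexE$, so the conclusion in~(ii) holds for all of them; this is where the hypothesis of \emph{range-normalization} in~(ii) is used.

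The main obstacle I anticipate is purely bookkeeping: tracking the imaginary translation terms correctly through the composition. The rescaling factor $e^{\lambda(t)}$ behaves exactly as in Remark~\ref{RM_applyAut}, but the shift $i\beta(t)$ enters the ODE for $\varphi_{s,t}$ as an additive term $\alpha(t)^{-1}\cdot(\text{stuff})$ whose time-integral produces a $t$-dependent (and $s$-dependent, through the initial condition) imaginary offset, and one must check that after conjugation the vector field really becomes the range-normalized chordal field $G=\tilde p$ with no leftover terms. Because the correction maps are affine with positive real dilation coefficient, no genuine analytic difficulty arises — the verification reduces to differentiating the proposed formulas in $t$ and matching them against~\eqref{EQ_ODE-chordal-type}, \eqref{EQ_PDE-chordal-type}, then invoking uniqueness of solutions to the Carath\'eodory ODE. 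One minor point worth stating explicitly is that $\alpha$ is allowed to vanish on a set of positive measure; on such a set $\tilde p(\cdot,t)\equiv0$, which is still a (degenerate) Herglotz vector field, and the argument goes through unchanged.
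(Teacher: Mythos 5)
Your overall strategy (normalize the Herglotz function so that Theorem~\ref{TH_alldetails} applies, then transfer back) matches the paper's, but the transfer step has a genuine gap: you try to absorb the factor $\alpha(t)$ by conjugating with affine maps $z\mapsto e^{\lambda(t)}z+i(\cdot)$ while keeping the \emph{same} time indices $(s,t)$. Multiplying the vector field by a time-dependent positive scalar is not a change of the spatial variable --- it is a change of the time scale of the flow --- and no $z$-affine conjugation can compensate for it. Concretely, take $p\equiv1$ (so $p'(\infty,t)=0$ and $\lambda\equiv0$), $\beta\equiv0$, and a nonconstant $\alpha$ with values in $U(k)\cap(0,+\infty)=\big[\tfrac{1-k}{1+k},\tfrac{1+k}{1-k}\big]$: then $\varphi_{s,t}(z)=z+(t-s)$ while $\tilde\varphi_{s,t}(z)=z+\int_s^t\alpha(\xi)\,\mathrm d\xi$, so your proposed identity $\varphi_{s,t}(z)=e^{\lambda(t)}\tilde\varphi_{s,t}(e^{-\lambda(s)}z)+i\gamma$ is false. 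Hence ``plug into the ODE and invoke uniqueness'' cannot close: the leftover term you dismissed as bookkeeping is exactly the factor $\alpha(t)$, and it is not removable by affine maps in $z$.

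What is needed --- and what the paper does --- is a time reparametrization. From \eqref{EQ_U(k)-unbdd} and the local integrability of $t\mapsto p(1,t)$ one first checks that $1/\alpha$ and $\beta$ are locally integrable (note that $0\notin U(k)$, so $\alpha>0$ a.e.; your aside about $\alpha$ vanishing on a set of positive measure is vacuous, and if one had $\tilde p(\cdot,t)\equiv 0$ on such a set, hypothesis \eqref{EQ_U(k)1} of Theorem~\ref{TH_alldetails} would fail there). Then one sets $u(t):=\int_0^t \mathrm ds/\alpha(s)$, $v(t):=\int_0^t\beta(s)\,\mathrm ds$, defines $\hat p(z,\xi):=\alpha(t)\big[p(z+iv(t),t)-i\beta(t)\big]$ at $t:=u^{-1}(\xi)$ (and $\hat p\equiv1$ for $\xi\ge T$ if $T:=\sup u<+\infty$); this $\hat p$ takes values in $U(k)$, Theorem~\ref{TH_alldetails} applies to it, and the correct transfer identity is $\varphi_{s,t}(z)=\hat\varphi_{u(s),u(t)}(z-iv(s))+iv(t)$, with reparametrized indices; statement (ii) then follows from (i) via Proposition~\ref{PR_QC-EF-LCh}. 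A minor further slip: when you also normalize away $p'(\infty,t)$ via $\lambda$, condition \eqref{EQ_U(k)-unbdd} transforms with $e^{\lambda(t)}\alpha(t)$ and $e^{-\lambda(t)}\beta(t)$ rather than with $\alpha$ and $\beta$ themselves, so your claim that \eqref{EQ_U(k)-unbdd} says ``precisely'' that $\tilde p(z,t)\in U(k)$ is not exact as written.
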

\begin{proof}
First of all note that we may assume $p'(\infty,t)=0$ for all~$t\ge0$. Indeed, otherwise we should consider $\tilde p$, $(\tilde\varphi_{s,t})$, $(\tilde f_t)$, defined in Remark~\ref{RM_applyAut}, and $e^{\lambda(t)}\alpha$, where $\lambda(t):=\int_{0}^tp'(\infty,\xi)\,\mathrm d\xi$, instead of $p$, $(\varphi_{s,t})$, $(f_t)$ and $\alpha$, respectively.

Recall that by definition of a Herglotz function, $t\mapsto p(1,t)$ is locally integrable on~$[0,+\infty)$. Hence from~\eqref{EQ_U(k)-unbdd} it follows that $1/\alpha$ and $\beta$ are also locally integrable functions on~$[0,+\infty)$. Let $u(t):=\int_0^t(1/\alpha(s))\mathrm ds$ and $v(t):=\int_0^t \beta(s)\mathrm ds$ for all~$t\ge0$. Now we define a Herglotz function $\hat p:\UH\times[0,+\infty)\to\Complex$ as follows. For all $\xi\in[0,T)$, where $T:=\sup_{t\ge0} u(t)$, and all $z\in\UH$ we set $$\hat p(z,\xi):=\alpha(t)\big[p\big(z+iv(t),\,t\big)-i\beta(t)\big]\Big|_{t:=u^{-1}(\xi)}.$$ If $T<+\infty$, we set $\hat p(\cdot,\xi)\equiv1$ for all~$\xi\ge T$. Note that $\hat p$ is measurable in~$t$ and satisfies~\eqref{EQ_U(k)1}. Therefore, $\hat p$ is indeed a Herglotz function in~$\UH$ and, by Theorem~\ref{TH_alldetails}, all the elements of the evolution family~$(\hat\varphi_{s,t})$ associated with~$\hat p$ are $k$-q.c. extendible.

To prove~(i), it remains to notice that $\varphi_{s,t}(z)=\hat\varphi_{u(s),u(t)}(z-iv(s))+iv(t)$ whenever $z\in\UH$ and $t\ge s\ge0$ by uniqueness of solutions to the Loewner~ODE. Now (ii) follows from~(i) by Proposition~\ref{PR_QC-EF-LCh}.
\end{proof}


\begin{remark}
The q.c.-extension of $(\varphi_{s,t})$ in the corollary we proved above has a fixed point at~$\infty$, because this is the DW-point of~$(\varphi_{s,t})$. At the same time, in contrast to Theorem~\ref{TH_alldetails}, it is not guaranteed the q.c.-extension of~$(f_t)$ fixes~$\infty$.
\end{remark}

    %
     \section{Loewner chain criteria}\label{S_LCh-criteria}
      %
    %

Several classical sufficient conditions for univalence in~$\UD$ can be obtained as corollaries of Pommerenke's criterion~\cite[Theorem~6.2]{Pom:1975} for a solution of the (radial classical) Loewner\,--\,Kufarev PDE to be a Loewner chain, see, e.g., \cite[\S6.3]{Pom:1975}.
The theorem below can be regarded as an analogue of that criterion for general Loewner chains. The statement of the theorem uses the following definition.
\begin{definition}
Let $D\subset\ComplexE$ be a domain with~$\mathop{\mathsf{Card}}(\ComplexE\setminus D)\ge3$. A family~$\mathcal F$ of functions from~$D$ to~$\ComplexE$ is said to be \textsl{uniformly locally univalent} in~$\UD$ if there exists~$\rho>0$ such that every~$f\in\mathcal F$ is univalent in every hyperbolic disk\footnote{That is a ball w.r.t. the hyperbolic distance in~$D$.} of radius~$\rho$ in~$D$.
\end{definition}

\begin{theorem}\label{TH_suff-for-Loewner-chain}
Let $(g_t)_{t\ge0}$ be a family of holomorphic functions in~$\UD$. Suppose that the following conditions hold:
\begin{itemize}
\item[(i)] the map $\UD\times[0,+\infty)\ni (z,t)\mapsto g_t(z)$ is continuous and for each~$z\in\UD$ the map $[0,+\infty)\ni t\mapsto g_t(z)$ is locally absolutely continuous;
\item[(ii)] there exists a Herglotz vector field~$G:\UD\times[0,+\infty)$  such that for every~$z\in\UD$,
$$\frac{\partial g_t(z)}{\partial t}=-g'_t(z)G(z,t)\quad\text{for a.e.~$t\ge0$};$$

\item[(iii)] the Loewner range $\cup_{t\ge0}f_t(\UD)$ of some (an hence any) chordal Loewner chain associated with~$p$ coincides with~$\Complex$;
\item[(iv)] the family~$(g_t)$ is uniformly locally univalent in~$\UD$.
\end{itemize}
Then $(g_t)$ is a Loewner chain associated with the Herglotz vector field~$G$.
\end{theorem}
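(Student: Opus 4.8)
The plan is to exhibit an honest Loewner chain $(f_t)$ associated with the vector field~$G$ and then to show that $(g_t)$ differs from it only by post-composition with an affine map. By the correspondences recalled in Section~\ref{S_preliminaries}, $G$ generates a unique evolution family $(\varphi_{s,t})$ in~$\UD$, and there is a Loewner chain $(f_t)$ with $f_t^{-1}\circ f_s=\varphi_{s,t}$ for $t\ge s$; it satisfies the PDE $\partial_t f_t(z)=-f_t'(z)G(z,t)$ for a.e.~$t$, and by hypothesis~(iii) its Loewner range $\Omega:=\bigcup_{t\ge0}f_t(\UD)$ equals~$\Complex$. The first step is to check that both $(f_t)$ and $(g_t)$ reproduce $(\varphi_{s,t})$, in the sense that $u_s=u_t\circ\varphi_{s,t}$ for $u=f$ and $u=g$ whenever $0\le s\le t$. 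For $u=f$ this is the defining relation. For $u=g$: fix $s\ge0$, $z\in\UD$ and put $F(t):=g_t(\varphi_{s,t}(z))$ for $t\ge s$. Using the continuity and local absolute continuity from~(i), the local absolute continuity of $t\mapsto\varphi_{s,t}(z)$, the fact that this path stays in a fixed compact subset of~$\UD$ on each bounded interval, and the uniform local bounds $|G(w,t)|\le k_K(t)$ satisfied by a Herglotz vector field, one verifies that $F$ is locally absolutely continuous with $F'(t)=-g_t'(\varphi_{s,t}(z))\,G(\varphi_{s,t}(z),t)+g_t'(\varphi_{s,t}(z))\,\dot\varphi_{s,t}(z)$, which vanishes for a.e.~$t\ge s$ because $\dot\varphi_{s,t}(z)=G(\varphi_{s,t}(z),t)$; hence $F\equiv F(s)=g_s(z)$.

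The second step is to patch the maps $\Phi_t:=g_t\circ f_t^{-1}\in\Hol(f_t(\UD),\Complex)$ (well defined since each $f_t$ is univalent) into a single holomorphic map. For $0\le s\le t$,
\[
\Phi_t\circ f_s=g_t\circ\varphi_{s,t}=g_s=\Phi_s\circ f_s\quad\text{on }\UD,
\]
so $\Phi_t=\Phi_s$ on $f_s(\UD)$. Since the domains $f_s(\UD)$ increase to $\Omega=\Complex$, these functions patch to an entire $\Phi:\Complex\to\Complex$ with $g_t=\Phi\circ f_t$ for all $t\ge0$. Differentiating gives $g_t'=(\Phi'\circ f_t)\,f_t'$; as $f_t$ is univalent and, by~(iv), each $g_t$ is locally univalent (univalence on hyperbolic disks of a fixed radius~$\rho$ forces $g_t'\ne0$), $\Phi'$ never vanishes, so $\Phi$ is a locally univalent entire function.

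The hard part is to upgrade local to \emph{global} univalence of $\Phi$; this is where~(iii) and~(iv) are used together. Let $\rho>0$ witness the uniform local univalence of $(g_t)$, and recall that $f_t:\UD\to f_t(\UD)$ is an isometry for the hyperbolic metrics. Suppose $\Phi(w_1)=\Phi(w_2)$ with $w_1\ne w_2$. For $t$ large enough both $w_1,w_2$ lie in $f_t(\UD)$; put $z_j:=f_t^{-1}(w_j)$, so $z_1\ne z_2$ and $g_t(z_1)=\Phi(w_1)=\Phi(w_2)=g_t(z_2)$. Since $g_t$ is injective on every hyperbolic $\rho$-disk of $\UD$, the points $z_1,z_2$ cannot lie in a common one, whence $d_{\UD}(z_1,z_2)\ge\rho$. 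On the other hand $d_{\UD}(z_1,z_2)=d_{f_t(\UD)}(w_1,w_2)$, and $d_{f_t(\UD)}(w_1,w_2)\to0$ as $t\to+\infty$: indeed $f_t(\UD)$ increases to $\Complex$, so it contains round disks about $(w_1+w_2)/2$ of radius tending to~$\infty$, and the hyperbolic distance between two fixed points in such a disk tends to~$0$. This contradiction shows $\Phi$ is injective, and an injective entire function is affine: $\Phi(w)=aw+b$ with $a\ne0$.

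It remains to conclude. From $g_t=af_t+b$ for all $t$ we get property~LC1 (univalence), LC2 ($g_s(\UD)=a f_s(\UD)+b\subset a f_t(\UD)+b=g_t(\UD)$) and LC3 (the bound for $(f_t)$ multiplied by $|a|$), so $(g_t)$ is a Loewner chain; moreover its evolution family is $g_t^{-1}\circ g_s=f_t^{-1}\circ f_s=\varphi_{s,t}$, which is exactly the evolution family generated by~$G$. Hence $(g_t)$ is a Loewner chain associated with~$G$. The only genuinely delicate points are the Carath\'eodory chain-rule justification in the first step (of the type carried out in~\cite{BracciCD:evolutionI}) and the hyperbolic-metric exhaustion estimate in the third; the latter is the real obstacle, the rest being bookkeeping.
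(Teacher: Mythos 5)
Your argument is correct and follows essentially the same route as the paper's (which proves only the half-plane analogue, Theorem~\ref{Th_suff-for-chordal-Loewner-chain}, and notes the arguments are very similar): establish $g_s=g_t\circ\varphi_{s,t}$ by the Carath\'eodory chain rule as in \cite{contreraslocalduality}, patch $g_t\circ f_t^{-1}$ into a single holomorphic map on the Loewner range $\Complex$, and use uniform local univalence together with the shrinking of the hyperbolic metric of $f_t(\UD)$ as it exhausts $\Complex$ to get global injectivity. Your variant of proving injectivity of the patched entire map $\Phi$ (hence affinity) rather than univalence of each $g_s$ directly is only a cosmetic difference.
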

Note that the above theorem is conformally invariant and can be also stated in the framework of the right half-plane. We, however, will use its more specific version.

\begin{theorem}\label{Th_suff-for-chordal-Loewner-chain}
Let $x_0>0$ and $(h_t)_{t\ge0}$ be a family of holomorphic functions in~$\UH(x_0):=\{z\colon\Re z>x_0\}$ satisfying the following conditions:
\begin{itemize}
\item[(i)] the map $\UH(x_0)\times[0,+\infty)\ni (z,t)\mapsto h_t(z)$ is continuous and for each~$z\in\UH(x_0)$ the map $[0,+\infty)\ni t\mapsto h_t(z)$ is locally absolutely continuous;
\item[(ii)] there exist constants $0<C_1<C_2<+\infty$ and a Herglotz function\\ $p:\UH\times[0,+\infty)\to\Pi:=\{\zeta\colon C_1<\Re \zeta<C_2\}$ such that for each~$z\in\UH(x_0)$,
\begin{equation}\label{EQ_LoewPDE_x_0}
\frac{\partial h_t(z)}{\partial t}=-h_t'(z)p(z,t)\quad \text{for a.e. $t\ge0$}.
\end{equation}

\item[(iii)] there exists $a>x_0$  such that $$\liminf_{t\to+\infty}\inf_{\Re z>a+C_1t}\frac{\mathsf{R}_u(h_t,z)}{\Re z}>0,$$ where $\mathsf{R}_u(f,z)$ stands for the (Euclidean) radius of univalence of a function~$f$ at the point~$z$.
\end{itemize}
Then for every~$t\ge0$ the function $h_t$ extends to a univalent holomorphic function ${g_t:\UH\to\Complex}$, with $(g_t)_{t\ge0}$ being a Loewner chain associated with the Herglotz function~$p$.
\end{theorem}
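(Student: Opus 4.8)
The plan is to realise $(h_t)$ as the restriction to $\UH(x_0)$ of a genuine Loewner chain $(g_t)_{t\ge0}$ in~$\UH$ and then invoke the half-plane version of Theorem~\ref{TH_suff-for-Loewner-chain}. Let $(\varphi_{s,t})$ be the chordal evolution family associated with the Herglotz function~$p$, unique by Proposition~\ref{PR_VF-chordal-type}. From the Loewner\,--\,Kufarev ODE~\eqref{EQ_ODE-chordal-type} and $\Re p>C_1$ one gets $\Re\varphi_{s,t}(z)\ge\Re z+C_1(t-s)$ for all $z\in\UH$, $t\ge s\ge0$; in particular $\varphi_{t,t'}(z)\in\UH(x_0)$ as soon as $t'-t>(x_0-\Re z)/C_1$. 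The first step is the method-of-characteristics identity $h_s(z)=h_t\big(\varphi_{s,t}(z)\big)$ for $z\in\UH(x_0)$ and $0\le s\le t$: the map $\xi\mapsto h_\xi(\varphi_{s,\xi}(z))$ on $[s,t]$ stays in $\UH(x_0)$, is locally absolutely continuous, and its derivative vanishes a.e.\ by~\eqref{EQ_LoewPDE_x_0} and the ODE for~$\varphi$ --- the same computation as in the proof of Theorem~\ref{TH_alldetails}.

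Using this identity I would define, for each $t\ge0$, a function $g_t\colon\UH\to\Complex$ by $g_t(z):=h_{t'}(\varphi_{t,t'}(z))$, where $t'\ge t$ is any time with $\varphi_{t,t'}(z)\in\UH(x_0)$; the identity together with the cocycle property of $(\varphi_{s,t})$ makes this independent of~$t'$. Then $g_t$ is holomorphic (locally a composition of holomorphic maps), extends $h_t$ (take $t'=t$), and $g_s=g_t\circ\varphi_{s,t}$ on all of~$\UH$ whenever $0\le s\le t$. Next I would verify the hypotheses of the half-plane version of Theorem~\ref{TH_suff-for-Loewner-chain} with Herglotz vector field $G:=p$ (which is of chordal type by Proposition~\ref{PR_VF-chordal-type}): joint continuity of $(z,t)\mapsto g_t(z)$ and local absolute continuity of $t\mapsto g_t(z)$ follow from the corresponding properties of $h$ and of~$\varphi$; the Loewner PDE $\mydot{g_t}=-g_t'\,p$ holds on $\UH(x_0)$ by hypothesis and propagates to~$\UH$ via $g_t=g_{t'}\circ\varphi_{t,t'}$ and the initial-variable equation $\partial_t\varphi_{t,t'}(z)=-\varphi_{t,t'}'(z)\,p(z,t)$ (again as in the proof of Theorem~\ref{TH_alldetails}); and the requirement that the Loewner range of the chordal Loewner chains associated with~$p$ be~$\Complex$ is exactly Proposition~\ref{PR_fill_allC_chordal}, applicable since $C_1<\Re p<C_2$.

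The one genuinely delicate point is uniform local univalence of $(g_t)$. By assumption~(iii) there are $c\in(0,1)$ and $T_0\ge0$ with $\mathsf{R}_u(h_t,z)\ge c\,\Re z$ whenever $t\ge T_0$ and $\Re z>a+C_1t$. Since the univalence radius is \emph{proportional} to $\Re z$, covering a point $w$ with $\Re w$ slightly below $a+C_1t$ by a disk $D(z,c\Re z)$ centred slightly to the right of~$w$ shows that $h_t$ is univalent on a disk $D(w,c_1\Re w)$ for every $w$ with $\Re w>(1-c/2)(a+C_1t)$, where $c_1>0$ depends only on~$c$; crucially this enlarged good region grows at slope $(1-c/2)C_1<C_1$. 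Put $\rho:=\log(1+c_1)$, so every such $h_t$ is univalent on each hyperbolic $\rho$-ball of~$\UH$ centred in that region. Given $z_0\in\UH$ and $t\ge0$, the bound $\Re\varphi_{t,t'}(z_0)\ge\Re z_0+C_1(t'-t)$ together with $(1-c/2)C_1<C_1$ lets one pick $t'\ge\max(t,T_0)$ so large that $w:=\varphi_{t,t'}(z_0)$ lies deep inside $\UH(x_0)$ and well inside the enlarged good region of~$h_{t'}$. Since $\varphi_{t,t'}\colon\UH\to\UH$ is univalent and hyperbolically non-expanding, it carries the hyperbolic $\rho$-ball about $z_0$ univalently into the hyperbolic $\rho$-ball about~$w$, on which $g_{t'}=h_{t'}$ is univalent; hence $g_t=g_{t'}\circ\varphi_{t,t'}$ is univalent on the $\rho$-ball about~$z_0$. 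This gives uniform local univalence with $\rho$ independent of $z_0$ and~$t$, and Theorem~\ref{TH_suff-for-Loewner-chain} then yields that $(g_t)$ is a Loewner chain associated with $G=p$; as $p$ is a Herglotz function the chain is of chordal type and associated with~$p$, and its elements $g_t$ are the desired univalent extensions of the~$h_t$.

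I expect the main obstacle to be precisely the growth-rate matching in the previous paragraph: the naive lower bound on the Loewner flow advances real parts at the same rate $C_1$ as the region appearing in hypothesis~(iii), so no conclusion is possible without first exploiting the proportionality of $\mathsf{R}_u(h_t,\cdot)$ to $\Re(\cdot)$ to open up the slack $(1-c/2)C_1<C_1$. The remaining items --- well-definedness of $g_t$, the differentiation-along-characteristics manipulations, and the passage between Euclidean disks and hyperbolic balls --- are routine, and closely parallel the arguments already carried out for Theorem~\ref{TH_alldetails} and Proposition~\ref{PR_fill_allC_chordal}.
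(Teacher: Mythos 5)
Your argument is sound in substance, but it takes a genuinely different route from the paper's. The paper never extends $h_t$ to $\UH$ by flowing forward; instead it factors $h_t=\Psi\circ f_t|_{\UH(x_0)}$ through an abstract Loewner chain $(f_t)$ associated with $(\varphi_{s,t})$, shows $\Omega=\bigcup_{s\ge0}f_s(\UH(x_0))=\Complex$ by Proposition~\ref{PR_fill_allC_chordal} applied to the shifted chain, and proves \emph{global} univalence of each $h_s$ on $\UH(x_0)$ directly: with $b:=a+C_1s+1$ and $z(t):=\varphi_{s,t}(b)$ it extracts from the proof of Proposition~\ref{PR_fill_allC_chordal} the decay $\varphi_{s,t}'(b)/\Re z(t)\to0$, so every compact $K\subset\UH(x_0)$ is eventually carried by $\varphi_{s,t}$ into the single disk $\{z\colon|z-z(t)|<\rho\,\Re z(t)\}$ on which hypothesis~(iii) gives univalence of $h_t$; injectivity of $h_s=h_t\circ\varphi_{s,t}$ on $K$ follows, and then $g_t:=\Psi\circ f_t$ are the desired extensions. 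You instead construct $g_t=h_{t'}\circ\varphi_{t,t'}$ explicitly (your well-definedness, the identity $g_s=g_t\circ\varphi_{s,t}$, the propagation of the PDE via $\partial_t\varphi_{t,t'}(z)=-\varphi_{t,t'}'(z)\,p(z,t)$, and the use of Proposition~\ref{PR_fill_allC_chordal} for the range condition are all correct), prove only \emph{uniform local} univalence --- your slack argument $(1-c/2)C_1<C_1$, exploiting the proportionality of $\mathsf{R}_u(h_t,\cdot)$ to $\Re(\cdot)$ together with Schwarz--Pick, is valid (up to the harmless normalization of the hyperbolic metric: with the density $|dz|/(2\,\Re z)$ used implicitly in the paper one should take $\rho=\tfrac12\log(1+c_1)$) --- and then delegate the globalization to the half-plane version of Theorem~\ref{TH_suff-for-Loewner-chain}. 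What this buys you is that you never need the derivative estimate hidden in the proof of Proposition~\ref{PR_fill_allC_chordal}; the price is that the paper states Theorem~\ref{TH_suff-for-Loewner-chain} without proof, declaring its proof ``very similar'' to the one you are being asked to supply, so your argument is not self-contained relative to the paper: unwinding that theorem still requires a globalization step you do not provide (for instance, that since the ranges $f_t(\UD)$ increase to $\Complex$, the hyperbolic distance between the $f_t$-preimages of two fixed points tends to $0$, so uniform local univalence forces injectivity of the factor map $\Psi$). As a citation of a previously stated result the reduction is legitimate, and with that caveat your proof is correct.
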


The proofs of the two theorems stated above are based on very similar arguments. That is why we present here only the proof of the latter one.

\begin{proof}[\textbf{Proof of Theorem~\ref{Th_suff-for-chordal-Loewner-chain}}]
Let $(\varphi_{s,t})$ be the evolution family associated with the Herglotz function~$p$. Note that $\varphi_{s,t}(\UH(x_0))\subset\UH(x_0)$ whenever $t\ge s\ge0$ thanks to~\eqref{EQ_JWC}.
Since $(z,t)\mapsto h_t(z)$ solves the PDE~\eqref{EQ_LoewPDE_x_0} in the sense of \cite[Definition 2.1]{contreraslocalduality}, it follows, see, e.g., \cite[Proposition 2.3]{contreraslocalduality}, that
\begin{equation}\label{EQ_varphi_h}
h_t\circ\varphi_{s,t}|_{\UH(x_0)}=h_s\quad\text{for all $s\ge0$ and all $t\ge s$.}
\end{equation}
Let $(f_t)$ be a Loewner chain associated with~$(\varphi_{s,t})$. Then $\varphi_{s,t}\circ f_s^{-1}|_{\Omega(s)}=f_t^{-1}|_{\Omega(s)}$ whenever $t\ge s\ge0$, where $\Omega(s):=f_s(\UH(x_0))$. Combining the latter equality with~\eqref{EQ_varphi_h}, we see that $h_s\circ f_s^{-1}|_{\Omega(s)}=h_t\circ f_t^{-1}|_{\Omega(s)}$ for all $s\ge0$ and $t\ge s$. Therefore, there exists a holomorphic function $\Psi:\Omega\to\Complex$, $\Omega:=\cup_{s\ge0}\Omega(s)$, such that \begin{equation}\label{EQ_ht-ft}
h_t=\Psi\circ f_t|_{\UH(x_0)}\quad\text{for all $t\ge0$}.
\end{equation}

It is easy to see that the functions $\tilde f_t(z):=f_t(z+x_0)$, $z\in\UH$, $t\ge0$, form a Loewner chain associated with the evolution family~$(\tilde\varphi_{s,t})$ given by $\tilde\varphi_{s,t}(z):=\varphi_{s,t}(z+x_0)-x_0$, and with the Herglotz function $\tilde p(z,t):=p(z+x_0,t)$. Therefore, by  Proposition~\ref{PR_fill_allC_chordal}, \begin{equation}\label{EQ_Omega=all-C}
\Omega=\bigcup_{t\ge0}\tilde f_t(\UH)=\Complex.
\end{equation}

Fix $s\ge0$ and denote $b:=a+C_1 s+1$, $z(t):=\varphi_{s,t}(b)$. On the one hand, by~(ii), $\Re z(t)>a+C_1t$ for all~$t\ge s$ and hence by (iii) there exist $T_0\ge s$ and $\rho>0$ such that $h_t$ is univalent in~$B_t:=\{z:|z-z(t)|<\rho\, \Re z(t)\}$ for all $t\ge T_0$. On the other hand, the proof of Proposition~\ref{PR_fill_allC_chordal} shows that $\varphi_{s,t}'(b)/\Re z(t)\to0$ as $t\to+\infty$. Since the function~$\varphi_{s,t}$ is univalent in~$\UH$ for each~$t\ge s$, it follows that for any compact set~$K\subset \UH(x_0)$ there exists $T(K)>T_0$ such that $\varphi_{s,t}(K)\subset B_t$ for all $t>T(K)$. As a result, the composition $h_t\circ \varphi_{s,t}$ is injective on~$K$ for all $t>T(K)$, which in view of~\eqref{EQ_varphi_h} implies that~$h_s$ is injective on every compact set~$K\subset \UH(x_0)$ and hence univalent in $\UH(x_0)$. Since $s\ge0$ is arbitrary in this argument, from~\eqref{EQ_ht-ft} and~\eqref{EQ_Omega=all-C} it follows that the functions $g_t:=\Psi\circ f_t$ are well-defined univalent holomorphic extensions of~$h_t$ to~$\UH$, for all~$t\ge0$, and that they form a Loewner chain in~$\UH$ associated with the Herglotz function~$p$. This proves the theorem.
\end{proof}

    %
      \section{Sufficient conditions for quasiconformal extension}
      \label{S_sufficient-conditions}
      %
    %
    %
%
We demonstrate usage of Theorems~\ref{TH_alldetails} and~\ref{Th_suff-for-chordal-Loewner-chain} by proving two known sufficient conditions for univalence and quasiconformal extendibility involving $h''/h'$ and $Sh$.
\begin{theorem}[\cite{BecPom:1984}, see also~{\cite[p.\,183]{AksentevS:2002}}]\label{TH_sufficient-uni-qc}
If a non-constant holomorphic function $h:\UH\to\Complex$ satisfies the condition
\begin{equation}\label{EQ_cond-second-der}
\left|\frac{h''(z)}{h'(z)}\right|\le \frac{k}{2\,\Re z}\quad \text{for all $z\in\UH$}
\end{equation}
and some $k\in[0,1]$, then $h$ is univalent in~$\UH$. Moreover, if $k<1$, then $h$ has a $k$-q.c. conformal extension to~$\ComplexE$ with $h(\infty)=\infty$.
\end{theorem}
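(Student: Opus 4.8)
The plan is to exhibit $h$ as the initial element of an explicit chordal Loewner chain and then invoke Theorems~\ref{Th_suff-for-chordal-Loewner-chain} and~\ref{TH_alldetails}. First observe that a zero of $h'$ of order $m$ would make $|h''/h'|$ blow up like $m/|z-z_0|$, so \eqref{EQ_cond-second-der} forces $h'$ to be zero-free in $\UH$; hence $q:=h''/h'$ is holomorphic on $\UH$ with $|q(z)|\le k/(2\Re z)$, and in particular $h$ is locally univalent. For $z\in\UH$ and $t\ge0$ set
\[
h_t(z):=h(z+t)-2t\,h'(z+t),
\]
which (since $\Re(z+t)>0$) is a well-defined holomorphic function on $\UH$ with $h_0=h$. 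Differentiating gives $\partial_t h_t(z)=-h'(z+t)-2t\,h''(z+t)$ and $h_t'(z)=h'(z+t)-2t\,h''(z+t)$, so $(h_t)$ solves $\partial_t h_t=-p\,h_t'$ with
\[
p(z,t):=\frac{1+2t\,q(z+t)}{1-2t\,q(z+t)},\qquad\text{equivalently}\qquad \frac{p(z,t)-1}{p(z,t)+1}=2t\,q(z+t).
\]
The coefficients $+t$ and $-2t$ are exactly what makes $(p-1)/(p+1)$ a scalar multiple of $q$ evaluated along the translation flow; consequently $\bigl|(p-1)/(p+1)\bigr|=2t\,|q(z+t)|\le tk/(\Re z+t)<k$ for $t>0$ (and $=0$ for $t=0$), so $p(z,t)\in U(k)$ for all $z\in\UH$ and all $t\ge0$. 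In particular $\Re p\ge0$, and since $|2t\,q(z+t)|<1$ also $h_t'\ne0$, so each $h_t$ is locally univalent.

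Next I would show that $(h_t)$ is genuinely a (chordal) Loewner chain associated with $p$ by checking the hypotheses of Theorem~\ref{Th_suff-for-chordal-Loewner-chain} on $\UH(x_0)$ for any fixed $x_0>0$. Hypothesis~(i) (joint continuity and local absolute continuity of $t\mapsto h_t(z)$) is immediate, and since $p\in U(k)$ with $k<1$ forces $\tfrac{1-k}{1+k}\le\Re p\le\tfrac{1+k}{1-k}$, the Herglotz function $p$ takes values in a vertical strip $\Pi=\{C_1<\Re\zeta<C_2\}$ with $0<C_1<C_2<\infty$, giving~(ii). For~(iii) I would bound the pre-Schwarzian of $h_t$: from $\tfrac{h_t''}{h_t'}=q(z+t)-\tfrac{2t\,q'(z+t)}{1-2t\,q(z+t)}$, the Cauchy estimate $|q'(w)|\le 2k/(\Re w)^2$ (valid because $|q|\le k/\Re w$ on the disc of radius $\tfrac12\Re w$ about $w\in\UH$), and the bound $|1-2t\,q(z+t)|\ge\Re z/(\Re z+t)$, one gets $|h_t''/h_t'|\le C_0k/\Re z$ on $\UH$ with $C_0$ an absolute constant, uniformly in $t$. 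A pre-Schwarzian dominated by a fixed multiple of the hyperbolic density forces $\mathsf R_u(h_t,z)\ge\rho\,\Re z$ for an absolute $\rho>0$ (by Noshiro\,--\,Warschawski on the disc $\{|\zeta-z|<\rho\,\Re z\}$), so the $\liminf$ in~(iii) is positive. Theorem~\ref{Th_suff-for-chordal-Loewner-chain} then provides a Loewner chain $(g_t)$ in $\UH$ associated with $p$ with $g_t|_{\UH(x_0)}=h_t|_{\UH(x_0)}$; by the identity theorem $g_t=h_t$ on all of $\UH$, and $(g_t)$ is of chordal type because $p$ is a Herglotz function in $\UH$ (Proposition~\ref{PR_VF-chordal-type}). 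In particular $h=h_0=g_0$ is univalent in $\UH$.

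Finally, since $p(z,t)\in U(k)$ for all $z\in\UH$ and every $t\ge0$, Theorem~\ref{TH_alldetails} applies to the chordal Loewner chain $(g_t)=(h_t)$; its assertions~(i)\,--\,(ii) give that $h=h_0$ extends continuously to $i\Real$ and then to a $k$-quasiconformal automorphism of $\ComplexE$ fixing $\infty$, with the explicit formula $h(-x+iy):=h_x(iy)=h(x+iy)-2x\,h'(x+iy)$ for $x>0$, $y\in\Real$. This settles the case $k<1$. In the endpoint case $k=1$ only univalence is asserted; the same construction still yields a Herglotz function $p$ and a family $(h_t)$ for which all hypotheses of Theorem~\ref{Th_suff-for-chordal-Loewner-chain} except the condition that the Loewner range be all of $\C$ hold verbatim (now with $|h_t''/h_t'|\le C_0/\Re z$ and $\Re p(z,t)\ge\Re z/(\Re z+2t)>0$), and I would obtain that last condition — hence univalence — by a routine approximation argument, either a limiting argument in $k$ or a refinement of the proof of Proposition~\ref{PR_fill_allC_chordal}.

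The step I expect to be the main obstacle is simply guessing the correct shape of the chain: the shear-plus-translation $h_t(z)=h(z+t)-2t\,h'(z+t)$, whose coefficients are pinned down by demanding that $(p-1)/(p+1)$ be a scalar multiple of $q$ composed with the flow. A secondary technical point is the uniform local univalence of $(h_t)$, which is precisely where one must exploit that \eqref{EQ_cond-second-der} bounds $h''/h'$ by the hyperbolic density rather than by a mere constant, through the Cauchy estimate on $q'$.
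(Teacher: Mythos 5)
For $k<1$ your proof is essentially the paper's: the same family $h_t(z)=h(z+t)-2t\,h'(z+t)$, the same Herglotz function $p=(1+2tq(z+t))/(1-2tq(z+t))$ with $q=h''/h'$, the same observation that $p$ takes values in $U(k)$, and the same conclusion via Theorems~\ref{Th_suff-for-chordal-Loewner-chain} and~\ref{TH_alldetails}. The only deviation is how you check hypothesis~(iii) of Theorem~\ref{Th_suff-for-chordal-Loewner-chain}: you bound the pre-Schwarzian of $h_t$ by $C_0k/\Re z$ (via a Cauchy estimate on $q'$ and the lower bound $|1-2tq(z+t)|\ge \Re z/(\Re z+t)$) and then invoke Noshiro\,--\,Warschawski on discs $\{|\zeta-z|<\rho\,\Re z\}$, while the paper argues through normality of the rescaled family $f_{z,t}$; both verifications are valid, and yours is if anything more explicit.

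The genuine weak point is the endpoint $k=1$. Your claim that for $k=1$ all hypotheses of Theorem~\ref{Th_suff-for-chordal-Loewner-chain} hold ``except the condition that the Loewner range be all of $\C$'' is inaccurate: the Loewner range is not a hypothesis of that theorem, and what actually fails is condition~(ii), the uniform strip bounds $C_1<\Re p<C_2$ --- your own estimate $\Re p(z,t)\ge \Re z/(\Re z+2t)$ degenerates as $t\to+\infty$, and $\Re p$ is likewise unbounded above; Remark~\ref{RM_essential-both-ineq} shows both bounds are essential even just for the range conclusion, so the alternative route ``refine Proposition~\ref{PR_fill_allC_chordal}'' does not work as stated. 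The ``limiting argument in $k$'' is the right idea, but it requires exhibiting approximants that satisfy \eqref{EQ_cond-second-der} with a constant strictly less than $1$ --- note that $h$ itself need not satisfy the hypothesis for any $k'<1$ --- and this construction is the one nontrivial step you leave unspecified. The paper supplies it by setting $h(z,\varepsilon):=h(1)+\int_1^z h'(1)\bigl(h'(\zeta)/h'(1)\bigr)^{1-\varepsilon}\,\mathrm{d}\zeta$, whose pre-Schwarzian is $(1-\varepsilon)h''/h'$, so the case $k<1$ applies; letting $\varepsilon\to0^+$ and using that a locally uniform limit of univalent functions is univalent or constant (and $h$ is non-constant) finishes the argument. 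Until such approximants are named, the $k=1$ part of your proposal is incomplete.
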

\begin{proof}
Note that by~\eqref{EQ_cond-second-der}, $h'$ does not vanish in~$\UH$.
We assume first that $k<1$. Following~\cite{BecPom:1984}, consider the family $h_t(z):=h(z+t)-2th'(z+t)$ for all $t\ge0$ and $z\in\UH$. It is easy to see that $(h_t)$ satisfies condition (i) and equation~\eqref{EQ_LoewPDE_x_0} in Theorem~\ref{Th_suff-for-chordal-Loewner-chain} with any $x_0>0$ and with $$p(z,t):=\frac{h'(z+t)+2th''(z+t)}{h'(z+t)-2th''(z+t)},\qquad t\ge0,~~z\in\UH.$$
From~\eqref{EQ_cond-second-der} it follows  that $p$ is a Herglotz function in~$\UH$ satisfying condition~\eqref{EQ_U(k)1} in Theorem~\ref{TH_alldetails}.  In particular, $C_1:=1/(2K)<\Re p<C_2:=2K$ , $K:=(1+k)/(1-k)$. This implies condition~(ii) in Theorem~\ref{Th_suff-for-chordal-Loewner-chain}.

Consider now the functions $f_{z,t}(\zeta):=h_t(z+\zeta\,\Re z/2)-h_t(z)$, $t\ge0$, $z\in\UH$, $\zeta\in\UD$. Applying~\eqref{EQ_cond-second-der} thrice, we obtain
$$
\left|\frac{f'_{z,t}(\zeta)}{f'_{z,t}(0)}\right|<K\left|\frac{h'(z+t+\zeta\,\Re z/2)}{h'(z+t)}\right|<Ke^{k/2}
$$
for any $z\in\UH$, any $t\ge0$ and any~$\zeta\in\UD$. It follows  that
 the family ${\{f_{z,t}/f_{z,t}'(0)\colon z\in\UH\}}$ is normal in~$\UD$. Therefore, (iii)~in Theorem~\ref{Th_suff-for-chordal-Loewner-chain} is also fulfilled, with any~$a>x_0$. Thus $(h_t)$ is a Loewner chain of chordal type satisfying the hypothesis of Theorem~~\ref{TH_alldetails}. This proves Theorem~\ref{EQ_cond-second-der} for~$k<1$.

To give the proof for $k=1$ we consider the family of functions in~$\UH$ defined by $$h(z,\varepsilon):=h(1)+\int_{1}^{z}h'(1)\big(h'(\zeta)/h'(1)\big)^{1-\varepsilon}\,\mathrm{d}\zeta,\quad z\in\UH,~~\varepsilon\in(0,1),$$
where we choose the branch of $(h'(\zeta)/h'(1))^{1-\varepsilon}$ that equals~$1$ at~$\zeta=1$.
By the above argument, $h(\cdot,\varepsilon)$ is univalent in~$\UH$ for any $\varepsilon\in(0,1)$. Since $h(\cdot,\varepsilon)\to h$ as $\varepsilon\to 0^+$ and since $h$ is non-constant by the hypothesis, we may conclude that $h$ is also univalent in~$\UH$. This finishes the proof.
\end{proof}

\begin{example}
It is easy to see that Theorem~\ref{TH_sufficient-uni-qc} can be applied to any Schwarz\,--\,Christoffel integral $h$ that maps~$\UH$ onto a convex unbounded polygon having angle~$\theta\in(0,\pi)$ at infinity or onto the exterior of such a polygon and satisfies the normalization $h(\infty)=\infty$. Theorem~\ref{TH_sufficient-uni-qc} guarantees $k$-q.c. extendibility of such a map~$h$ with~$k:=1-\theta/\pi$.
\end{example}

It seems interesting to compare the sufficient condition given by Theorem~\ref{TH_sufficient-uni-qc} with the following necessary condition.
\begin{proposition}
If $h:\UH\to\Complex$ is univalent, then
\begin{equation}\label{EQ_PreScw-in-H}
\left|\frac{h''(z)}{h'(z)}\right|\le\frac{3}{\Re z}\quad\text{for all $z\in\UH$}.
\end{equation}
Moreover, if $h$ admits a $k$-q.c. extension to~$\ComplexE$ with the fixed point at~$\infty$, then
\begin{equation}\label{EQ_PreScw-in-H-qc}
\left|\frac{h''(z)}{h'(z)}\right|\le\frac{3k}{\Re z}\quad\text{for all $z\in\UH$}.
\end{equation}
\end{proposition}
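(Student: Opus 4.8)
The plan is to reduce both inequalities to coefficient estimates for univalent functions on $\UD$, transferred by a Cayley map. Fix $z_0\in\UH$. Post-composing $h$ with the affine automorphism $B(w):=w\,\Re z_0+i\,\Im z_0$ of $\UH$ — which extends to a conformal automorphism of $\ComplexE$ fixing $\infty$, and hence preserves both hypotheses, univalence and $k$-q.c.-extendibility to $\ComplexE$ with a fixed point at $\infty$ — and using $B''\equiv 0$ to get $(h\circ B)''(1)/(h\circ B)'(1)=(\Re z_0)\,h''(z_0)/h'(z_0)$, it suffices to prove the bounds at $z_0=1$. Next I precompose with the Cayley map $C(\zeta):=(1+\zeta)/(1-\zeta)$, which satisfies $C(0)=1$, $C'(0)=2$ and $C''(0)/C'(0)=2$, and set $g:=h\circ C$ and $\hat g:=(g-g(0))/g'(0)\in\mathcal S$. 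The chain rule gives $2a_2(\hat g)=\hat g''(0)/\hat g'(0)=2\,h''(1)/h'(1)+2$, i.e.
\[
  \frac{h''(1)}{h'(1)}=a_2(\hat g)-1 .
\]
For \eqref{EQ_PreScw-in-H} this finishes the argument: $|a_2(\hat g)|\le 2$ by Bieberbach's inequality, so $|h''(1)/h'(1)|\le 3$, and undoing the two reductions yields $|h''(z)/h'(z)|\le 3/\Re z$ on $\UH$.

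For \eqref{EQ_PreScw-in-H-qc} one would like to insert at this point the quasiconformal Bieberbach bound $|a_2|\le 2k$, valid for functions of class $\mathcal S$ admitting a $k$-q.c. extension to $\ComplexE$ with a fixed point at $\infty$ (Kühnau; see also \cite{Schober:1975}). The subtlety is that this cannot be applied to $\hat g$: since $C(\infty)=-1\ne\infty$, the natural extension $\tilde h\circ C$ of $g$ fixes the finite point $\tilde h(-1)$ rather than $\infty$, and it carries the boundary point $\zeta=1$ of $\UD$ to $\infty$; feeding in only the plain bound $|a_2(\hat g)|\le 2$ delivers merely $|h''(1)/h'(1)|\le 2k+1$, which is strictly weaker than $3k$ for every $k\in(0,1)$. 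To get the sharp constant $3k$ one must argue intrinsically in the half-plane. This is the companion of Theorem~\ref{TH_sufficient-uni-qc} established by Becker and Pommerenke \cite{BecPom:1984}: exploiting that the Beltrami coefficient $\mu$ of $\tilde h$ is supported in the closed left half-plane with $\|\mu\|_\infty\le k$, one runs the quasiconformal refinement of the classical coefficient/area machinery (cf. \cite{Schober:1975}) in $\UH$ — equivalently, for functions $g\in\mathcal S$ whose $k$-q.c. extension has its pole at a boundary point $p_0\in\partial\UD$, where the right inequality is $|a_2(g)-\overline{p_0}|\le 3k$.

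I expect this last inequality to be the only real obstacle. The loss in the elementary disk reduction is structural: the Cayley map contributes the fixed pre-Schwarzian term $C''(0)/C'(0)=2$, which is not damped by the dilatation, and no Möbius map $\UD\to\UH$ can simultaneously annihilate this term (that would force it to be affine, hence not onto $\UH$) and produce an extension of $g$ that fixes $\infty$. So the step that genuinely uses quasiconformality is proving $|a_2(g)-\overline{p_0}|\le 3k$ — rather than the free estimate $|a_2(g)-\overline{p_0}|\le 3$ — by an extremal/area-type argument that keeps track of the pole on $\partial\UD$; once that is in hand, undoing the reductions of the first paragraph produces \eqref{EQ_PreScw-in-H-qc}. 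Alternatively, since the Proposition is used here only for comparison with Theorem~\ref{TH_sufficient-uni-qc}, \eqref{EQ_PreScw-in-H-qc} may simply be quoted from \cite{BecPom:1984}.
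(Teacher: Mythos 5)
Your argument for \eqref{EQ_PreScw-in-H} is correct: the affine reduction to $z_0=1$, the Cayley transfer, and $h''(1)/h'(1)=a_2(\hat g)-1$ with $|a_2|\le2$ do give the constant $3$, and this is essentially equivalent to the paper's first step. For \eqref{EQ_PreScw-in-H-qc}, however, your proposal stops exactly where the quasiconformal hypothesis has to be used: the inequality you need, $|a_2(g)-\overline{p_0}|\le 3k$ for $g\in\mathcal S$ whose $k$-q.c.\ extension sends a boundary point $p_0\in\partial\UD$ to $\infty$, is only asserted, not proved, and no argument (area method or otherwise) is sketched for it beyond the remark that it should follow from ``quasiconformal coefficient machinery''. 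The fallback of ``quoting it from \cite{BecPom:1984}'' does not work either: that paper contains the \emph{sufficient} condition (Theorem~\ref{TH_sufficient-uni-qc} here), not this necessary estimate with constant $3k$; in the present paper the estimate is the authors' own observation and is proved, not cited. So as it stands the second half of the Proposition is not established.

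The paper's proof shows how to avoid the boundary-pole obstruction you ran into, and it is worth comparing. Instead of a M\"obius change of variable (which inevitably moves $\infty$ and spoils the normalization needed for the q.c.\ Bieberbach-type bounds), one uses only \emph{affine} reparametrizations: for $z\in\UH$ and $r\in[0,1)$ set $f_{z,r}(\zeta):=h\bigl((\zeta+1)\tfrac{\Re z}{1-r}+i\,\Im z\bigr)$ and $f:=\bigl(f_{z,r}-f_{z,r}(0)\bigr)/f_{z,r}'(0)\in\clS$. Since the affine map and the given extension $\tilde h$ both fix $\infty$, $f$ inherits a $k$-q.c.\ extension to $\Complex$ (fixing $\infty$), hence $f$ lies in Lehto's class $\clS_{k,R}$ of \cite[p.\,354]{Lehto:1976} for every $R>1$. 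Applying the invariant pre-Schwarzian bound $(1-|\zeta|^2)|f''(\zeta)/f'(\zeta)|\le 6$ (see \cite[p.\,41--42]{Becker:1980}) at the \emph{interior} point $\zeta=-r$, where $f''(-r)/f'(-r)=\tfrac{\Re z}{1-r}\,h''(z)/h'(z)$, and letting $r\to1^-$ already recovers \eqref{EQ_PreScw-in-H}; and since the functional $f\mapsto f''(-r)/f'(-r)$ is analytic and vanishes at the identity, Lehto's Majorant Principle (letting $R\to+\infty$) allows one to replace $6$ by $6k$, which after the same limit $r\to1^-$ gives \eqref{EQ_PreScw-in-H-qc}. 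In other words, the quasiconformal input is delivered by a ready-made general principle applied to a point functional, rather than by a new coefficient inequality with a prescribed boundary pole; if you prefer your reduction, you would have to supply a complete proof of your $|a_2(g)-\overline{p_0}|\le 3k$ claim, which is essentially the same amount of work as the Proposition itself.
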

\begin{proof}
Applying the inequality $(1-|\zeta|^2)|f''(\zeta)/f'(\zeta)|\leq 6$, see, e.g.,~\cite[p.\,41--42]{Becker:1980}, to $f(\zeta):=\big(f_{z,r}(\zeta)-f_{z,r}(0)\big)/f_{z,r}'(0)$, where $f_{z,r}(\zeta):=h((\zeta+1)\tfrac{\Re z}{1-r}+i\,\Im z)$, ${\zeta\in\UD}$, ${z\in\UH}$, ${r\in[0,1)}$, at $\zeta:=-r$ and letting $r\to1^-$ we obtain~\eqref{EQ_PreScw-in-H}.

If now $h$ has a $k$-q.c. extension to~$\ComplexE$ with the fixed point at~$\infty$, then $f$ belongs to the class $\clS_{k,R}$ considered in~\cite[p.\,354]{Lehto:1976} for all $R>1$. Therefore, letting~$R\to+\infty$, from Lehto's Majorant Principle~\cite{Lehto:1976} we deduce that~$6$ in the estimate for~$f''/f'$ can be replaced by~$6k$. This proves~\eqref{EQ_PreScw-in-H-qc}.
\end{proof}
\begin{remark}
The example $h(z):=1/z^{1+k}$, $k\in[0,1]$, shows that~\eqref{EQ_PreScw-in-H} is sharp  and that the requirement for the q.c.-extension to fix~$\infty$ is essential: for $k\in[0,1)$, $h$ is $k$-q.c. extendible by Theorem~\ref{TH_sufficient-uni-qc} applied to~$1/h$,  but $|h''(1)/h'(1)|=2+k>3k$.
\end{remark}

It is well-known that the following condition, a q.c.-analogue of Nehari's univalence criterion,
\begin{equation}\label{EQ_KN-disk}
|Sf(\zeta)|\,(1-|\zeta|^2)^2\le 2 k\quad\text{for all~$\zeta\in\UD$},
\end{equation}
is sufficient for $k$-q.c. extendibility of a non-constant holomorphic function $f:\UD\to\Complex$, see, e.g.,  \cite[ineq.\,(2)]{Ahlfors:1974}. Thanks to properties of the Schwarzian, this statement is equivalent to the fact that the condition
\begin{equation}\label{EQ_KN-half-plane}
|Sh(z)|\,\,(\Re z)^2\le k/2\quad\text{for all~$z\in\UH$},
\end{equation}
is sufficient for $k$-q.c. extendibility of a non-constant holomorphic function $h:\UH\to\Complex$.

For the case of~$\UD$, Becker's Theorem~\ref{TH_Beckerthm} yields an explicit formula for a q.c.-extension of functions~$f$ satisfying~\eqref{EQ_KN-disk}, see, e.g.,~\cite{Becker:1972,Becker:1980}.

Similarly, Theorems~\ref{TH_alldetails} and~\ref{Th_suff-for-chordal-Loewner-chain} allow us to obtain analogous formula for~$\UH$. Note that since the Schwarzian is invariant w.r.t. post-composition with M\"obius transformations, we may assume that $h(z)\to\infty$ as ${\UH\ni z\to\infty}$.

\begin{proposition}\label{PR_Schwarzian}
Let $h:\UH\to\Complex$ be a non-constant holomorphic function satisfying~\eqref{EQ_KN-half-plane} for some~$k\in[0,1)$. Then $h$ has a $k$-q.c. extension to~$\ComplexE$.

Moreover, if $h(z)\to\infty$ as $\UH\ni z\to\infty$, then the  formula
\begin{equation}
\label{EQ_ext}
\tilde h(z):=\left\{
\begin{array}{ll}
h(z)\vphantom{\displaystyle\int_0^1}, & \text{if~$\Re z\ge0$},\\
h(z^*)+\dfrac{2 h'(z^*)\,\Re z}{1-\frac{h''(z^*)}{h'(z^*)}\,\Re z},~z^*:=-\overline z, & \text{if~$\Re z<0$},
\end{array}\right.
\end{equation}
defines a $k$-q.c. extensions of~$h$.
\end{proposition}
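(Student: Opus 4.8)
The plan is to produce an explicit chordal Loewner chain over $\UH(x_0)$, $x_0>0$, whose associated Herglotz function lies in $U(k)$, and then read the q.c.\ extension off Theorems~\ref{Th_suff-for-chordal-Loewner-chain} and~\ref{TH_alldetails}. The first assertion of the proposition is the classical Ahlfors--Weill theorem transported to~$\UH$ (recalled above~\eqref{EQ_KN-half-plane}); equivalently, once the explicit formula is proved it yields the general assertion after post-composing with a M\"obius transformation carrying the (then available) limit $\lim_{\UH\ni z\to\infty}h(z)$ to~$\infty$, since $Sh$ is M\"obius-invariant. So from now on I assume $h(z)\to\infty$ as $\UH\ni z\to\infty$. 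First note that $h'$ has no zero in~$\UH$: a zero of~$h'$ would force $Sh$ to have a pole of order two, contradicting~\eqref{EQ_KN-half-plane}; hence $Sh$ and $h''/h'$ are holomorphic in~$\UH$.

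For $x_0>0$, $z\in\UH(x_0)$ and $t\ge 0$ I would set
\[
h_t(z):=h(z+t)-\frac{2t\,h'(z+t)}{1+t\,h''(z+t)/h'(z+t)}.
\]
The first thing to check — and the step I expect to be the main obstacle — is that this is a \emph{well-defined holomorphic} function on $\UH(x_0)$, i.e.\ that $1+t\,h''(w)/h'(w)\neq 0$ for $w:=z+t$. Since $\Re w=\Re z+t>t$, this is precisely the statement that the Ahlfors--Weill reflection formula for the restriction of $h$ to $\UH(\Re z)$ — which satisfies~\eqref{EQ_KN-half-plane} relative to the line $\{\Re=\Re z\}$ because $(\Re w'-\Re z)^2\le(\Re w')^2$ for $\Re w'>\Re z>0$ — has a non-vanishing denominator at~$w$, which is guaranteed by the classical theorem (applied in the disk via a Cayley map) together with the hypothesis $h(\infty)=\infty$. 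I expect this is the point that the authors either borrow from the classical Ahlfors--Weill estimate or establish with a short Riccati-type lemma on the pre-Schwarzian.

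Next I would verify the hypotheses of Theorem~\ref{Th_suff-for-chordal-Loewner-chain}. Condition~(i) is immediate from the formula. For~(ii), a direct differentiation, using $2h'h'''-3(h'')^2=2(h')^2Sh$, gives
\[
h_t'(z)=\frac{h'(z+t)\bigl(1+2t^2Sh(z+t)\bigr)}{\bigl(1+t\,h''(z+t)/h'(z+t)\bigr)^2},\qquad
\frac{\partial h_t(z)}{\partial t}=-h_t'(z)\,p(z,t),\quad p(z,t):=\frac{1-2t^2Sh(z+t)}{1+2t^2Sh(z+t)}.
\]
Since $\Re(z+t)>t$, condition~\eqref{EQ_KN-half-plane} gives $\bigl|2t^2Sh(z+t)\bigr|\le kt^2/(\Re(z+t))^2<k<1$; hence $p(\cdot,t)$ is holomorphic, $\bigl|(p-1)/(p+1)\bigr|=\bigl|2t^2Sh(z+t)\bigr|<k$, so $p(z,t)\in U(k)$, and in particular $\Re p$ stays in a fixed compact subinterval of $(0,+\infty)$, so $p$ is a Herglotz function valued in a strip — this is~(ii). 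For~(iii): on $\{\Re z>a+C_1t\}$ the ratio $t/\Re(z+t)$ is bounded away from~$1$, so $|1+2t^2Sh(z+t)|$ and $|1+t\,h''(z+t)/h'(z+t)|$ are bounded away from~$0$ (the latter again by the Ahlfors--Weill estimate), whence $h_t'$ is comparable to $h'(\cdot+t)$; since $h$ has uniformly bounded Schwarzian in the hyperbolic metric it is uniformly locally univalent there, and a normal-families argument just as in the proof of Theorem~\ref{TH_sufficient-uni-qc} then shows $h_t$ is univalent on Euclidean disks of radius comparable to $\Re z$ about such points, uniformly for large $t$, giving~(iii).

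By Theorem~\ref{Th_suff-for-chordal-Loewner-chain}, each $h_t$ then extends to a univalent $g_t\colon\UH\to\C$ with $(g_t)$ a chordal Loewner chain whose associated Herglotz function is the $p$ above; as $p(z,t)\in U(k)$ for all $z\in\UH$ and a.e.~$t$, Theorem~\ref{TH_alldetails} applies and $g_0=h$ has a $k$-q.c.\ extension to~$\ComplexE$ fixing~$\infty$, equal to $g_0$ on $\{\Re z\ge0\}$ and to $z=-x+iy\mapsto g_x(iy)$ on $\{\Re z<0\}$. Finally, the displayed formula for $h_t$ in fact defines a holomorphic function on all of~$\UH$ (the denominator is non-vanishing there for the same reason), which by the identity theorem coincides with $g_t$; specialising to $t=x>0$, $z=iy$, and writing $z^*:=-\overline{z}=x+iy$ with $\Re z=-x$, the value
\[
g_x(iy)=h(z^*)-\frac{2x\,h'(z^*)}{1+x\,h''(z^*)/h'(z^*)}=h(z^*)+\frac{2h'(z^*)\,\Re z}{1-\frac{h''(z^*)}{h'(z^*)}\,\Re z}
\]
is exactly the second branch of~\eqref{EQ_ext}, completing the proof.
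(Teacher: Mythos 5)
Your chain $h_t$, the computation of $h_t'$ and of the Herglotz function $p(z,t)=\bigl(1-2t^2Sh(z+t)\bigr)/\bigl(1+2t^2Sh(z+t)\bigr)\in U(k)$, and the final identification of $g_x(iy)$ with formula~\eqref{EQ_ext} all coincide with the paper's argument. The genuine gap is exactly the step you flagged as the main obstacle: control of the denominator $1+t\,h''(z+t)/h'(z+t)$. You resolve it by invoking the classical Ahlfors--Weill theorem (the homeomorphic extension of $\ComplexE$ given by the osculating M\"obius maps) for $h$ restricted to translated half-planes. This substantially begs the question: transported by the Cayley map, the Ahlfors--Weill extension across the imaginary axis \emph{is} formula~\eqref{EQ_ext}, so the classical statement you cite already contains the proposition; moreover your M\"obius reduction of the first assertion presupposes that $\lim_{\UH\ni z\to\infty}h(z)$ exists, which is not established for a general $h$ satisfying~\eqref{EQ_KN-half-plane}. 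And even granting the classical theorem, injectivity of the sphere homeomorphism yields only \emph{non-vanishing} of $1+t\,h''/h'$, whereas your verification of condition~(iii) of Theorem~\ref{Th_suff-for-chordal-Loewner-chain} needs the denominator \emph{bounded away from zero} uniformly on $\{\Re z>a+C_1t\}$ for large $t$ (in effect a quantitative bound of the type $|h''(w)/h'(w)|\le k/\Re w$); the phrase ``again by the Ahlfors--Weill estimate'' does not supply this.

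The paper closes precisely this gap with the self-contained device you guessed at: writing $Sh=(Ph)'-(Ph)^2/2$ with $Ph:=h''/h'$, a contraction-mapping argument on the ball $\{g:\|g\|_P\le k\}$, $\|g\|_P:=\sup_{z\in\UH}|g(z)|\,\Re z$, produces $g_0$ with $Sh=g_0'-g_0^2/2$, hence a function $\hat h$ with $S\hat h=Sh$ and $\|P\hat h\|_P\le k$. Then $|t\,P\hat h(z+t)|\le kt/(\Re z+t)<k<1$ makes all denominators harmless and quantitatively bounded, the chain argument runs for $\hat h$ (via Proposition~\ref{PR_LoewChain}), the first assertion for arbitrary $h$ follows from $h=T\circ\hat h$ with $T$ M\"obius (no limit of $h$ at $\infty$ is needed), and the explicit formula follows since $\hat h=h$ when $h(z)\to\infty$. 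To repair your write-up you would need either to import this pre-Schwarzian (Riccati) lemma or to prove the bound $|h''/h'|\le k/\Re z$ under the normalization $h(\infty)=\infty$ by other means; as written, the key analytic input is assumed rather than proved.
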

\begin{proof}
Note that $Sh=(Ph)'-(Ph)^2/2$, where $Ph:=h''/h'$. Consider the integral operator
$$
Lg(x+iy):=-\int\limits_{x}^{+\infty}\!\Big(\tfrac{1}{2}g(x'+iy)^2+Sh(x'+iy)\Big)\mathrm{d}x',\quad x+iy\in\UH,
$$
defined on the set~$W$ of all holomorphic $g:\UH\to\Complex$ with finite ``pre-Schwarzian'' norm $\|g\|_P:=\sup_{z\in\UH}|g(z)|\,\Re z$. With the help of~\eqref{EQ_KN-half-plane} it is easy to see that $L$ is a contracting self-map of~$\mathbb B_k:=\{g:\|g\|_P\le k\}$. Therefore, there exists $g_0\in \mathbb B_k$ such that $Sh=g_0'-g_0^2/2$. Let $\hat h$ be the unique holomorphic function in~$\UH$ satisfying $\hat h(1)=h(1)$, $\hat h'(1)=h'(1)$, and $P\hat h=g_0$. Since by construction, $S\hat h=Sh$, we have $h=T\circ \hat h$ for some M\"obius transformation~$T$. Therefore, to prove the first part of the proposition, it is sufficient to show that $\hat h$ has a $k$-q.c. extension.
 To this end we will demonstrate that \eqref{EQ_ext} would define a $k$-q.c. extension of~$\hat h$ if we replace $h$ with~$\hat h$ in that formula.  Consider the family
$$
h_t(z):=\hat h(z+t)-\frac{2t\hat h'(z+t)}{1+tP\hat h(z+t)},\quad z\in\UH,~~t\ge0.
$$
Note that $$h_t'(z)=\hat h'(z+t)\frac{1+2t^2 S\hat h(z+t)}{(1+t P\hat h(z,t))^2}\quad\text{for all~$z\in\UH$ and all~$t\ge0$}.$$
Taking  into account the inequality~$\|P\hat h\|_P\le k$ and making use of essentially the same method as in the proof of Theorem~\ref{TH_sufficient-uni-qc}, it is not difficult to see that $(h_t)$ satisfies the hypothesis of Proposition~\ref{PR_LoewChain} with $p(z,t):=\big(1-2t^2 S\hat h(z+t)\big)/\big(1+2t^2 S\hat h(z+t)\big)$. Thus by Theorem~\ref{TH_main-thrm-in-short}, $\hat h$ has a $k$-q.c. extension with the fixed point at~$\infty$ given by formula~\eqref{EQ_ext} in which we have to replace~$h$ by~$\hat h$.

To complete the proof it remains to observe that if  $h(z)\to\infty$ as $z\to\infty$, then $\hat h=h$.
\end{proof}

\subsection{Quasiconformal extendibility via one-parameter semigroups}

In this subsection we prove two sufficient conditions for q.c.-extendibility of holomorphic functions in~$\UH$, using Theorem~\ref{TH_alldetails} and the theory of one-parameter semigroup of holomorphic functions.

\begin{theorem}\label{TH_sufficient-qc1}
Let $h\colon\UH\to\Complex$ be a holomorphic function. Suppose that $h'(\UH)$ is contained in a closed Euclidean disk~$B$, with $0\not\in B$.
Then $h$ admits a $k$-q.c. extension to~$\ComplexE$ with a fixed point at~$\infty$, where $k:=(K-1)/(K+1)$, $K:=\max\limits_{w,z\in B}\sqrt{|w/z|}$.
\end{theorem}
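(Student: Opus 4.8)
The plan is to exhibit $h$ itself as the time‑zero member $f_{0}$ of a Loewner chain of chordal type whose associated Herglotz function takes values in $U(k)$, after which Theorem~\ref{TH_alldetails}\,(ii) finishes the job.

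First I would dispose of the plane geometry. Writing $B=\{w\colon|w-c|\le r\}$ (possible with $|c|>r\ge0$, since $0\notin B$), one has $\max_{w,z\in B}|w/z|=(|c|+r)/(|c|-r)$, so $K=\sqrt{(|c|+r)/(|c|-r)}$ and $k=(K-1)/(K+1)\in[0,1)$; a short computation gives $r/|c|=(K^{2}-1)/(K^{2}+1)=2k/(1+k^{2})$, and hence $\lambda:=\frac{1+k^{2}}{1-k^{2}}\cdot\frac1c$ satisfies $\lambda B=\{w\colon|w-\frac{1+k^{2}}{1-k^{2}}|\le\frac{2k}{1-k^{2}}\}=U(k)$. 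Therefore $\lambda h'(z)\in U(k)$ for every $z\in\UH$. The point I would then exploit is that $U(k)$ is invariant under $w\mapsto1/w$ (because $|(1/w-1)/(1/w+1)|=|(w-1)/(w+1)|$), so that also $p(z):=1/(\lambda h'(z))\in U(k)$ for all $z\in\UH$; in particular $\tfrac{1-k}{1+k}\le\Re p\le\tfrac{1+k}{1-k}$, and the same bounds hold for $\lambda h'$. Since $\UH$ is convex and $\Re(\lambda h')>0$ on it, $\lambda h$ --- hence $h$ --- is univalent there, which we need anyway for the statement to make sense.

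Next I would verify that $f_{t}(z):=h(z)-t/\lambda$, $z\in\UH$, $t\ge0$, is a Loewner chain of chordal type with associated Herglotz function $p$. Conditions LC1 and LC3 of Definition~\ref{DF_L-ch} are immediate ($f_t$ is a translate of the univalent $h$, and $|f_{s}(z)-f_{t}(z)|=|t-s|/|\lambda|$); the only nontrivial point is the nesting LC2, i.e. $h(\UH)+\sigma/\lambda\subseteq h(\UH)$ for $\sigma\ge0$. I would get this from the flow of $p$: for $z_{0}\in\UH$, the Cauchy problem $\dot\gamma(\sigma)=p(\gamma(\sigma))$, $\gamma(0)=z_{0}$, has a solution for all $\sigma\ge0$ that stays in $\UH$ (since $\Re p\ge\tfrac{1-k}{1+k}>0$ keeps $\Re\gamma$ increasing and bounded away from $0$, while $|p|\le\tfrac{1+k}{1-k}$ rules out blow‑up), and along it $\frac{d}{d\sigma}h(\gamma(\sigma))=h'(\gamma)\dot\gamma=1/\lambda$, so $h(\gamma(\sigma))=h(z_{0})+\sigma/\lambda\in h(\UH)$. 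Then $(f_t)$ is a Loewner chain whose evolution family, by uniqueness for that ODE, is $\varphi_{s,t}(z)=\gamma_{z}(t-s)=h^{-1}\!\big(h(z)+(t-s)/\lambda\big)$, solving the chordal Loewner\,--\,Kufarev ODE~\eqref{EQ_ODE-chordal-type} with the Herglotz function $p$; as $p$ is a Herglotz function in $\UH$ (it obeys HF1--HF3), Proposition~\ref{PR_VF-chordal-type} gives that $(\varphi_{s,t})$, and hence $(f_t)$, is of chordal type. Equivalently, $f_t$ solves~\eqref{EQ_PDE-chordal-type}, since $\dot f_{t}=-1/\lambda=-p(z)\,h'(z)=-p(z)f_{t}'(z)$.

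Finally, since $p(z,t)\equiv p(z)\in U(k)$ for all $z\in\UH$ and a.e.~$t\ge0$, Theorem~\ref{TH_alldetails}\,(ii) produces a $k$-quasiconformal extension of $f_{0}=h$ to $\ComplexE$ fixing $\infty$, which is precisely the assertion. I expect the genuine obstacle to be locating the right construction rather than carrying it out: one must use $p=1/(\lambda h')$, \emph{not} $\lambda h'$, as the Herglotz function --- only with this choice is $h$ literally $f_{0}$ (it is then a primitive of $1/(\lambda p)$) --- and the identity $1/U(k)=U(k)$ is exactly what legitimizes it. The remaining work (the geometry pinning down $\lambda,k$ and the routine checking of LC1--LC3, mainly LC2) is straightforward.
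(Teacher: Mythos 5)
Your proposal is correct and follows essentially the same route as the paper: with $\omega=1/\lambda$ the paper also sets $p:=\omega/h'\in U(k)$ (implicitly using $1/U(k)=U(k)$ and $B=\omega U(k)$), embeds $h$ as $h_t:=h-\omega t$, gets univalence from Noshiro--Warschawski, and applies Theorem~\ref{TH_alldetails}. The only cosmetic difference is that you verify LC1--LC3 by hand via the explicit flow $\varphi_{s,t}=h^{-1}\circ\bigl(h+(t-s)\omega\bigr)$, whereas the paper deduces the Loewner-chain property from $h_t\circ\varphi_{s,t}=h_s$ by citing \cite[Lemma~3.2]{MR2789373}.
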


Prior to the proof of the above theorem, we would like to make some comments.

\begin{remark}\label{RM_explicit}
Sugawa~\cite[Theorem~4.1]{SugawaPol} proved a more general result, which states essentially that Theorem~\ref{TH_sufficient-qc1} holds with $\UH$ replaced by any convex domain. The proof we give below is based on Theorem~\ref{TH_alldetails} and therefore works only for~$\UH$. However, it has an advantage that it immediately provides the explicit formula for a $k$-q.c. extension~$\tilde h$ of~$h$ to~$\Complex$. Namely, if $w_1$ and $w_2$ are the two points of~$B$ at which $B\ni w\mapsto|w|$ attains its extrema, then the desired extension is given by $\tilde h(x+iy)=h(iy)+\omega x$ for all $y\in\Real$ and all $x<0$, where $\omega:=\sqrt{w_1w_2}\in B$. One obvious property of this extension is used in the proof of Corollary~\ref{C_qc-sufficient}.
\end{remark}

\REM{In general, given a closed disk $B\subset\Complex^*:=\Complex\setminus\{0\}$ and a domain~$D\subset\ComplexE$, one can pose the problem to find the smallest $k=k(D,B)\in[0,1)$ such that any holomorphic function $f:D\to\Complex$ with $f'(D)\subset B$ has a $k$-q.c. extension to~$\ComplexE$. (We set $k(D,B):=1$ if there exists a holomorphic $f:D\to\Complex$ with $f'(D)\subset B$ admitting no q.c.-extension to~$\ComplexE$.) It seems to be a very plausible conjecture that Theorem~\ref{TH_sufficient-qc1} and its counterpart for~$\UD$ by Sugawa give the exact value of $k(\UH,B)$ and $k(\UD,B)$, respectively. For $D:=\Complex\setminus\overline\UD$, the situation is a bit different. On the one hand, by already mentioned  result \cite[Theorem\,1]{Krzyz:1976}, $k(\Complex\setminus\overline\UD)<1$. On the other hand, the function $g(\zeta):=\zeta+\alpha/\zeta$, $|\alpha|<1$, $|\zeta|>0$, cannot be extended $k$-q.c. to~$\ComplexE$ if $k>|\alpha|$, thanks to the estimates for the Laurent coefficients of q.c.-extendible functions~\cite[Satz\,3]{Kuehnau:1969}, \cite[Corollary~1]{Lehto:1971}. In view of Theorem~\ref{TH_sufficient-qc1}, it follows that $$K(\UH,B)\le \sqrt{K(\Complex\setminus\overline\UD,B)}\quad\text{for any $B$},$$ where as usual $K(D,B):=\frac{1+k(D,B)}{1-k(D,B)}$.}

\REM{
\begin{example}
Fix $k\in(0,1)$ and let $h_1(z):=Kz-(K^2-1)\log(K+z)$ for all $z\in\UH$, where $K:=(1+k)/(1-k)$.
It is easy to see that $h_1'(z)=(Kz+1)/(K+z)$ maps $\UH$ onto the interior of~$U(k)$. Therefore, by Theorem~\ref{TH_sufficient-qc1}, $h_1$ admits a $k$-q.c. extension to~$\ComplexE$ having a fixed point at~$\infty$. Moreover, the proof of that theorem allows to find the explicit formula for the extension. It is given by $h_1(x+iy)=x+iKy-(K^2-1)\log(K+iy)$ for all~$x\le0$ and all~$y\in\Real$. The Beltrami coefficient of this q.c.-map equals $\mu_{h_1}(z)=-k\,\frac{i+\Im z}{i-\Im z}$ for all $z\in\Complex$ with~$\Re z<0$.
\end{example}

\begin{example}\label{EX_h2}
For the function $h_2(z):=z-\tfrac{2k}{1+k^2}e^{-z}$, $z\in\UH$, we have $$h_2'(\UH)=\{w\colon 0<|w-1|<\tfrac{2k}{1+k^2}\}.$$ Therefore, by Theorem~\ref{TH_sufficient-qc1}, $h_2$ admits a $k$-q.c. extension to~$\ComplexE$ with a fixed point at~$\infty$.
\end{example}}

\begin{example}\label{EX_h}
Let $h(z):=z-a/(1+az)$ for all~$z\in\UH$, where $a>0$ is a constant. Elementary but a bit laborious computations show that $h'(\UH)$ is bounded, with $|h'(z)|>b$ for all~$z\in\UH$ and some constant $b>0$ depending on~$a$. Therefore, by Theorem~\ref{TH_sufficient-qc1}, $h$ admits a q.c.-extension. At the same time $|h''(1)/h'(1)|=a^3/(4+a^2)$ implies that $h$ does not satisfy the Becker\,--\,Pommerenke condition~\eqref{EQ_cond-second-der} for all~$a\ge 2$.  Similarly, the q.c.-analogue of Nehari's condition, i.e. condition~\eqref{EQ_KN-half-plane}, fails at~$z:=1$ provided $a$ is large enough.
\end{example}

\REM{Using these examples, one can compare Theorem~\ref{TH_sufficient-qc1} with some classical sufficient conditions for q.c.-extendibility of normalized holomorphic functions in~$\UD$ and~$\ComplexE\setminus\overline\UD$. To apply such conditions to a function~$h:\UH\to\C$, one would consider $$f(w):=\big(f_0(w)-f_0(0)\big)/f_0'(0),\quad |w|<1,\quad\text{ and }\quad g(\zeta):=1/f(1/\zeta),\quad |\zeta|>1,$$ where $f_0(w):=h(\frac{1+w}{1-w})$ for all~$w\in\UD$.

Let us consider the criteria for functions in~$\ComplexE\setminus\overline\UD$ listed in \cite[p.\,66]{Becker:1980} and their analogues for functions in~$\UD$, see \cite{Ahlfors:1974, Brown:1984, SugawaPol}.
It is not difficult to show that for $a>0$ large enough, the criteria from~\cite{Ahlfors:1974, Brown:1984, SugawaPol} fail to reveal the q.c.-extendibility of~$h_3$. The same is true for~\cite[Theorem 5.3(c) on p.\,66]{Becker:1980} and for the criteria making use of~$zf'/f$ that we consider below. It seems possible to show that Krzy\.z's and Becker's criteria~\cite[(a'), (a), and (b) on p.\,66]{Becker:1980} imply the q.c.-extendibility of the function~$g_3$ obtained from~$h:=h_3$, although it is definitely more complicated than applying directly Theorem~\ref{TH_sufficient-qc1}. Furthermore, Becker's criterion~\cite[p.\,66, Theorem~5.3(b)]{Becker:1980} does not apply to the function~$g_2$ corresponding to~$h:=h_2$, because $g_2''(w)/g_2'(w)$ is unbounded near~$w=1$.
Similarly, for certain values of~$k\in(0,1)$, that function does not satisfy the generalized form of the Krzy\.z criterion \cite[p.\,66, Theorem~5.3(a)]{Becker:1980}, because the set~$g'_2(\ComplexE\setminus\overline\UD)$ is symmetric w.r.t.~$\Real$ and, when $k$ is close to~$1$, it intersects the half-plane ${\{\zeta:\Re \zeta<0\}}$. This follows from the fact that for $k:=1$ the function $G(y):=g_2'(\tfrac{iy+1}{iy-1})$, $y\in\Real$, vanishes at~$y=\pi$, with ${\Re G'(\pi)\neq 0}$.


Thus, it does not seem likely that Theorem~\ref{TH_sufficient-qc1} can be obtained as a corollary of previously known criteria for the q.c.-extendibility, although the form and simplicity of the condition in that theorem suggests it should be closely related with the classical results.}




\begin{proof}[\fontseries{bx}\selectfont Proof of Theorem~\ref{TH_sufficient-qc1}]
The proof is based on embedding of $h$ into a Loewner chain associated with a Herglotz function not depending on~$t$.
For a suitable $\omega\in\Complex^*:=\Complex\setminus\{0\}$, we have $B=\omega U(k)$. It follows that the function~$p(z,t):=\omega/h'(z)$, $z\in\UH$, $t\ge0$, takes values in~$U(k)$. In particular, $p$ is a Herglotz function in~$\UH$. Denote by $(\varphi_{s,t})$ its associated evolution family.

Set $h_t:=h-\omega t$ for all $t\ge0$. Clearly, $(d/dt)h_t(\varphi_{s,t}(z))=0$ for all $z\in\UH$ and all~$t\ge s\ge0$. Therefore, $h_t\circ\varphi_{s,t}=h_s$ whenever $t\ge s\ge0$. Moreover, since $\Re h'/\omega>0$, by the Noshiro\,--\,Warschawski Theorem (see, e.g., \cite[p.\,47]{Duren:book}), $h_t$'s are univalent in~$\UH$. Thus by \cite[Lemma~3.2]{MR2789373}, $(h_t)$ is a Loewner chain associated with~$(\varphi_{s,t})$, and the desired statement follows now directly from Theorem~\ref{TH_alldetails}.
\end{proof}

It is known, see \cite{Krzyz::StronglyStarlike}, that if the inequality
\begin{equation}\label{EQ_StrongStarlike}
\left|\arg\frac{zf'(z)}{f(z)}\right|\le\arcsin k
\end{equation}
holds for all $z\in\UD\setminus\{0\}$ and some~$k\in[0,1)$, then  $f$ is $k$-q.c. extendible; see also~\cite{Sugawa} for a generalization of this result.
Using Becker's Theorem~\ref{TH_Beckerthm}, one can prove  that the condition ${zf'(z)/f(z)\in U(k)}$ for all $z\in\UD\setminus\{0\}$ also implies $k$-q.c. extendibility.\footnote{See \cite[Theorem 1]{Brown:1984} and \cite{SugawaPol} for other proofs.} Two different generalizations of the latter criterion has been recently established by the second author~\cite[Proposition~2, Theorem~3]{Hotta::LeadingCoefficient}. Analyzing the arguments in the proof of Theorem~\ref{TH_sufficient-qc1} we are able to obtain a criterion improving both of these generalizations.

\begin{corollary}\label{C_qc-sufficient}
Let $k\in[0,1)$ and ${K:=(1+k)/(1-k)}$. Let ${f\in\Hol(\UD,\Complex)}$. Suppose that $f(0)=0$ and $f'(0)\neq0$. If there exists a closed disk~$B\subset\Complex^*$  such that $\max\limits_{w,z\in B}\sqrt{|w/z|}\le K$ and
\begin{equation}\label{EQ_inCorollary}
\frac{zf'(z)}{f(z)}\in B\quad\text{for all $z\in\UD\setminus\{0\}$},
\end{equation}
then $f$ has a $k$-q.c. extension to~$\ComplexE$ with a fixed point at~$\infty$.
\end{corollary}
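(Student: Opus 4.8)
The plan is to transport the hypothesis to the right half-plane through the exponential covering $\zeta\mapsto e^{-\zeta}$, which turns the quantity $zf'(z)/f(z)$ into an honest derivative, and then to invoke Theorem~\ref{TH_sufficient-qc1} together with the explicit extension formula recorded in Remark~\ref{RM_explicit}. Before doing so, I would record the following elementary consequences of the hypothesis. Since $zf'(z)/f(z)\in B\subset\Complex^*$ for all $z\in\UD\setminus\{0\}$, this function is bounded in $\UD$; as it would blow up near a zero $z_0\neq0$ of $f$, we conclude that $f$ and $f'$ are zero-free on $\UD\setminus\{0\}$ and that $f$ has a simple zero at the origin (consistently with $zf'(z)/f(z)\to1\in B$ as $z\to0$). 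In particular $f(z)/z$ is a zero-free holomorphic function on the simply connected domain $\UD$.

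Next I would set $E(\zeta):=e^{-\zeta}$, which maps $\UH$ onto $\UD\setminus\{0\}$ as an infinite cyclic holomorphic covering, and define $H:=\log(f\circ E)$ on $\UH$ (a branch exists since $\UH$ is simply connected and $f\circ E$ is zero-free). A direct computation gives $H'(\zeta)=-E(\zeta)\,f'(E(\zeta))/f(E(\zeta))$, so that $H'(\UH)\subset-B$, which is again a closed Euclidean disk avoiding $0$, with $\max_{w,z\in-B}\sqrt{|w/z|}=\max_{w,z\in B}\sqrt{|w/z|}\le K$. Moreover, because the circle $\{|z|=r\}$, $r\in(0,1)$, traversed once clockwise encloses only the simple zero of $f$ at $0$, the loop $f\circ E$ has winding number $-1$ about the origin along $\zeta\mapsto\zeta+i\tau$, $\tau\in[0,2\pi]$; this yields the monodromy relation $H(\zeta+2\pi i)=H(\zeta)-2\pi i$ for all $\zeta\in\UH$. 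Now Theorem~\ref{TH_sufficient-qc1}, applied to $H$ with the disk $-B$, produces a $k'$-q.c. extension $\tilde H$ of $H$ to $\ComplexE$ fixing $\infty$, where $k':=(K'-1)/(K'+1)\le k$ and $K':=\max_{w,z\in-B}\sqrt{|w/z|}$; by Remark~\ref{RM_explicit} this extension is given by $\tilde H(x+iy)=H(iy)+\omega_0\,x$ for $x<0$, $y\in\Real$, with $\omega_0\in-B$. Using this formula on $\{\Re\zeta<0\}$, the continuity of $H$ on $i\Real$, and the monodromy relation on $\UH$, one checks that $\tilde H(\zeta+2\pi i)=\tilde H(\zeta)-2\pi i$ holds for \emph{all} $\zeta\in\Complex$.

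Finally I would descend $\tilde H$ through $E$. Since $\tilde H(\infty)=\infty$, the map $\tilde H|_\Complex$ is a self-homeomorphism of $\Complex$, and the extended monodromy relation says precisely that it normalizes the deck group $\langle\zeta\mapsto\zeta+2\pi i\rangle$ of $E$; hence $\exp\circ\tilde H$ is invariant under that group and factors through the quotient map $E$, defining a continuous map $\tilde f\colon\Complex^*\to\Complex^*$ by $\tilde f(E(\zeta)):=\exp(\tilde H(\zeta))$. Reading $\tilde f$ in local conformal charts given by local inverses of $E$ and of $\exp$, one sees that $\tilde f$ is a $k'$-q.c. self-homeomorphism of $\Complex^*$; and on $\UD\setminus\{0\}$ it satisfies $\tilde f(z)=\exp(H(-\log z))=f(z)$. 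By removability of isolated singularities for q.c. maps, $\tilde f$ extends to a $k'$-q.c. automorphism of $\ComplexE$ which permutes the punctures $0$ and $\infty$; since $\tilde f(z)=f(z)\to f(0)=0$ as $z\to0$, it fixes them, so $\tilde f(0)=0$ and $\tilde f(\infty)=\infty$. Thus $\tilde f$ is a $k'$-q.c.---hence $k$-q.c.---extension of $f$ to $\ComplexE$ with a fixed point at $\infty$ (and, as a byproduct, $f$ is univalent). The step I expect to require the most care is this descent: one must make sure that the purely topological extension $\tilde H$ respects the covering $E$, which hinges entirely on propagating the monodromy relation from $\UH$ to all of $\Complex$ by means of the explicit formula of Remark~\ref{RM_explicit}; once that is secured, conformal invariance of the quasiconformal dilatation under $E$ and the removable-singularity theorem finish the argument.
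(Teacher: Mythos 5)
Your argument is correct and is essentially the paper's own proof, up to the harmless sign convention $H=\log(f\circ E)=-\,(-\log f(e^{-z}))$, which merely replaces the disk $B$ by $-B$ when invoking Theorem~\ref{TH_sufficient-qc1}: both proofs lift $f$ through the exponential covering, apply Theorem~\ref{TH_sufficient-qc1} together with the explicit extension of Remark~\ref{RM_explicit} to propagate the $2\pi i$-monodromy relation to the whole plane, descend through the covering, and finish with the removability of the punctures at $0$ and $\infty$. Your write-up just spells out the descent and normalization steps that the paper leaves implicit.
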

\begin{proof}
Choose any single-valued branch of $h(z):=-\log f(e^{-z})$ in~$\UH$. Then by Theorem~\ref{TH_sufficient-qc1}, $h$ has a $k$-q.c. extension~$\tilde h$ to~$\ComplexE$ with a fixed point at~$\infty$.  Moreover, since $f$ has a simple zero at the origin, $h$ commutes with the shift map $w\mapsto w+2\pi i$. To obtain a $k$-q.c. extension~$\tilde f$ of~$f$ to~$\C$, now it is sufficient to use the fact that $\tilde h$, see Remark~\ref{RM_explicit} for the explicit formula, inherits this property, i.e., $\tilde h(w+2\pi i)=\tilde h(w)+2\pi i$ for all $w\in\Complex$. Finally, since  $\tilde f(\Complex)\subset\Complex$ by construction, $\tilde f$ extends quasiconformally to~$\ComplexE$ by setting $\tilde f(\infty):=\infty$.
\end{proof}

\begin{remark}
It is easy to see from the above proof that if we suppose  in Corollary~\ref{C_qc-sufficient} that instead of a simple zero, $f$ has a multiple zero at~$0$, then $f$ would have an extension up to a $k$-quasiregular ramified covering of~$\ComplexE$ with ramification points at~$0$ and~$\infty$.
\end{remark}

\REM{By the \textit{hyperbolic radius} and \textit{center} of a disk $D\subset\UH$ we mean the radius and center of~$D$ regarded as a ball w.r.t. the hyperbolic distance in~$\UH$, $$\dist_{hyp,\UH}(z_1,z_2):=\frac12\log\frac{1+\rho_\UH(z_1,z_2)}{1-\rho_\UH(z_1,z_2)},\quad\text{ where } \rho_\UH(z_1,z_2):=\left|\frac{z_1-z_2}{z_1+\overline z_2}\right| \text{ for all $z_1,z_2\in\UH$}.$$}

Note that $U(k)$ is the hyperbolic disk in~$\UH$ of  radius~$\frac12\log(1+k)/(1-k)$ and  centered at~$1$. In the proof of the following theorem we make use of Corollary~\ref{C_qc-ext}. This allows us to replace\footnote{The same can be done in the context of Theorem~\ref{TH_sufficient-qc1}. However, it would not lead to any improvement.} the disk $U(k)$ with a disk~$D$ of the same hyperbolic radius but centered at an arbitrary point of~$\UH$.
\begin{theorem}\label{TH_sufficient-qc2}
Let $k\in[0,1)$ and ${K:=(1+k)/(1-k)}$. Let ${h:\UH\to\Complex}$ be a holomorphic function. Suppose that there exists a closed hyperbolic disk~$D\subset\UH$ of radius $\frac12\log K$ such that the condition
\begin{equation}\label{EQ_sufficiet_qc2}
\dfrac{\vphantom{\int\nolimits^1}h(z)+a}{h'(z)}-z\,\in\, D
\end{equation}
holds for all $z\in\UH$ and some~$a\in\Complex$.
Then $h$ has a $k$-q.c. extension to~$\ComplexE$ with a fixed point at~$\infty$.
\end{theorem}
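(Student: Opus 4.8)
The plan is to embed $h$ into a chordal Loewner chain with a $t$-independent Herglotz function and then invoke Corollary~\ref{C_qc-ext}. Write $p(z):=\dfrac{h(z)+a}{h'(z)}-z$, so that the hypothesis reads $p(z)\in D$ for all $z\in\UH$, and put $M_0:=\max_{w\in D}|w|<\infty$. First I would note that $h'$ has no zero in $\UH$ (a zero would make $p$ have a pole, or force $p(z_0)=-z_0\notin\overline{\UH}$), and, by the same reasoning, $g:=h+a$ is zero-free; hence $V:=g/g'=p+\mathrm{id}$ is holomorphic on $\UH$ with $\Re V>0$, so it is a Herglotz function in $\UH$ (constant in $t$), and $V'(\infty)=\lim_{z\to\infty}V(z)/z=1$ because $p$ is bounded.

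Next I would use the one-parameter semigroup $(\sigma_u)_{u\ge0}$ in $\UH$ generated by $V$, whose Denj
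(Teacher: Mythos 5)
Your setup is sound and runs along exactly the same lines as the paper's own argument: writing $g:=h+a$, observing that $h'$ and $g$ are zero-free (your case analysis is correct), that $V:=g/g'$ is a $t$-independent Herglotz function with $V'(\infty)=1$, and planning to use the semigroup generated by $V$ together with Corollary~\ref{C_qc-ext}. But the proposal breaks off precisely where the real work begins, and what is missing is not routine. First, univalence of $h$ is part of the conclusion, not a hypothesis, and it does not follow merely from $\Re V>0$: one shows that $\sigma:=\log g$ solves the Abel equation $\sigma\circ\phi_t=\sigma+t$ for the semigroup $(\phi_t)$ generated by $V$ and is univalent by Noshiro--Warschawski since $\Re\sigma'=\Re(1/V)>0$, but to pass from univalence of $\sigma$ to univalence of $g=e^{\sigma}$ (and of all $h_t:=e^{-t}g$) one needs the nontrivial fact that $\sigma(\UH)$ lies in a horizontal strip of width $\pi$. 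Second, Corollary~\ref{C_qc-ext}(i) only gives $k$-q.c.\ extendibility of the semigroup elements $\varphi_{s,t}=\phi_{t-s}$; to conclude anything about $h$ itself via part (ii) you must verify that $(h_t)=(e^{-t}g)$ is a \emph{range-normalized} Loewner chain associated with $V$, i.e.\ that $\bigcup_{t\ge0}e^{-t}g(\UH)$ is a half-plane. In the paper this is exactly where the linearization theory of hyperbolic one-parameter semigroups enters (using $V'(\infty)=1$ and the absorption property) to get $\sigma(\UH)\subset\{c-\pi/2<\Im w<c+\pi/2\}$ and $\bigcup_{t\ge0}[\sigma(\UH)-t]$ equal to that strip; none of this appears in your sketch.

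Two further points you would still have to supply. The application of Corollary~\ref{C_qc-ext} requires exhibiting constants $\alpha>0$ and $\beta\in\Real$ with $\alpha\bigl(D-i\beta\bigr)\subset U(k)$; this works because $D$ and $U(k)$ are hyperbolic disks of the same radius $\tfrac12\log K$ and $w\mapsto\alpha(w-i\beta)$ are hyperbolic automorphisms of $\UH$, but it must be said. Finally, the theorem asserts that the extension fixes $\infty$, and the paper's remark after Corollary~\ref{C_qc-ext} warns that for Loewner chains this is \emph{not} automatic; the paper closes this gap by noting that condition~\eqref{EQ_sufficiet_qc2} forces $\int_1^{+\infty}h'(x)/(h(x)+a)\,\mathrm{d}x=+\infty$. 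As it stands, your proposal is a correct beginning of the paper's proof, but the univalence, the half-plane range, and the fixed point at $\infty$ --- the substance of the theorem --- are all still missing.
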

\begin{remark}
In fact, we can say more about  holomorphic functions satisfying conditions of the above theorem, see Lemma~\ref{LM_f}.
\end{remark}
\begin{example}
Let $h(z):=\sqrt{(z+1)^2+\alpha}$, with some $\alpha\in\Complex$. Then $h$ is holomorphic in~$\UH$ if and only if $|\alpha|\le2+\Re\alpha$. Moreover, if the latter inequality is strict, then $h$ satisfies the hypothesis of the above theorem with~$K:=\sqrt{(2+\Re\alpha+|\alpha|)/(2+\Re\alpha-|\alpha|)}$ and~${a=0}$.
\end{example}
\begin{remark}
The hypotheses in Theorems~\ref{TH_sufficient-qc1} and~\ref{TH_sufficient-qc2} imply certain geometric properties of $h(\UH)$. Namely, if $h$ satisfies the hypothesis of Theorem~\ref{TH_sufficient-qc1}, then $h(\UD)$ is convex in one direction, i.e. $w+e^{i\theta}t\in h(\UD)$ for all~$w\in h(\UD)$, all $t\ge0$ and some~$\theta\in\Real$ independent of~$w$ and~$t$, see, e.g., \cite[Chapter\,3.5]{ElinShoikhet:book}. Similarly, condition~\eqref{EQ_sufficiet_qc2} implies that~$h(\UD)$ is starlike w.r.t.~$\infty$, i.e. $e^tw\in h(\UH)$  for all~$w\in h(\UD)$ and all $t\ge0$, see, e.g.,  \cite[Chapter\,3.1]{ElinShoikhet:book}. Therefore, although~\eqref{EQ_sufficiet_qc2} implies that both $h'$ and $1/h'$ are bounded in~$\UH$, it is easy to find an example\REM{\footnote{Take any conformal map~$h_0:\UH\Mapinto\UH$ having a simple pole at~$\infty$ and such that~$h(\UD)$ is starlike w.r.t.~$\infty$ but not convex in any direction, e.g., $h_0(z):=2\pi i\,\big/\left(\frac{i\pi}2+\log\frac{z+i}{iz+1}\right)$, which maps $\UH$ onto ${\{w\in\UH\colon|w-1|>1\}}$. Then for all $\varepsilon>0$ small enough, $h_\varepsilon(z):=h_0(z+\varepsilon)$ satisfies the hypothesis of Theorem~\ref{TH_sufficient-qc2} with some $k=k(\varepsilon)\in(0,1)$ but not that of Theorem~\ref{TH_sufficient-qc1}.}} of a function~$h$ that satisfies~\eqref{EQ_sufficiet_qc2} for some~$k\in(0,1)$ but does not satisfy the hypothesis of Theorem~\ref{TH_sufficient-qc1} for any~$k\in(0,1)$.
\end{remark}

\begin{proof}[\fontseries{bx}\selectfont Proof of Theorem~\ref{TH_sufficient-qc2}]
We will follow the proof of Theorem~\ref{TH_sufficient-qc1}, which implicitly uses one-parameter semigroups of holomorphic self-maps of~$\UH$. In the proof of the present theorem, one-parameter semigroups play more substantial role. Readers unfamiliar with the basic theory of such semigroups are  referred to, e.g., \cite{Abate:book, ElinShoikhet:book,Shoikhet:2001}.

 First of all, we obviously may assume~$a=0$. Then $p(z,t):=h(z)/h'(z)$ is a Herglotz function, with $p'(\infty,t)=1$ for all~$t\ge0$. Consider the evolution family~$(\varphi_{s,t})$ associated with~$p$. Set $h_t:=e^{-t}h$ for all~$t\ge0$. Then $h_t\circ\varphi_{s,t}=h_s$ whenever $t\ge s\ge0$. Note also that since $p(z,t)$ does not depend on~$t$, we have $\varphi_{s,t}=\varphi_{0,t-s}=:\phi_{t-s}$ whenever $t\ge s\ge0$. The family~$(\phi_t)_{t\ge0}$ is a one-parameter semigroup in~$\UH$ with the infinitesimal generator~$G(z):=h(z)/h'(z)$ and the Denjoy\,--\,Wolff point at~$\infty$. Condition~\eqref{EQ_sufficiet_qc2} implies that $h$ does not vanish in~$\UH$ and hence, choosing any branch of~$\log$,  the function ${\sigma:=\log h}$ is well-defined and single-valued in~$\UH$. Moreover, $\sigma\circ\phi_t=\sigma+t$ for all~$t\ge0$, and $\Re \sigma'=\Re(1/p)>0$, from which, by the Noshiro\,--\,Warschawski Theorem, it follows that $\sigma$ is univalent in~$\UH$. Note also that $G'(\infty)=1$ due to condition~\eqref{EQ_sufficiet_qc2}. Then combining \cite[Theorem\,\,2.1]{ContrMadrigal:AnaFlows} with \cite[Theorem\,1]{ContrMadrigalPomm:2006}, see also~\cite[Theorem\,7]{Elin-Khavinson-Reich-Shoikhet:2010}, and taking into account the ``absorption property'' (see, e.g., \cite[Remark~4.12]{Gumenyuk:2014}), we conclude that there exists~$c\in\Real$ such that:
\begin{itemize}
\item[(a)] $\sigma(\UH)\subset\mathcal D:=\{w\colon c-\pi/2<\Im w<c+\pi/2\}$, and\\[-1.5ex]
\item[(b)] $\bigcup_{t\ge0}[\sigma(\UH)-t]=\mathcal D$.
\end{itemize}
Taking into account~(a), the univalence of $\sigma$ implies the univalence of all $h_t$'s, which in turn implies that $(h_t)$ is a Loewner chain associated with~$(\varphi_{s,t})$. Furthermore, by~(b) the Loewner range of~$(h_t)$ is a half-plane. Condition~\eqref{EQ_sufficiet_qc2} implies that the Herglotz function~$p$ satisfies~\eqref{EQ_U(k)-unbdd}. Therefore it follows, due to Corollary~\ref{C_qc-ext}, that $h=h_0$ is $k$-q.c. extendible.
It remains to notice that condition~\eqref{EQ_sufficiet_qc2} implies that $\int_1^{+\infty}\!\big(h'(x)/h(x)\big)\,\mathrm{d}x=+\infty$ and hence the extension of~$h$ has to fix~$\infty$.
\end{proof}
\REM{\begin{remark}
\nv{It is worth to mention that in contrast to Becker's classical construction, the range of the Loewner chain~$(h_t)$ coming into play in the above proof does not cover the whole complex plain. Therefore, analyzing the proof of Corollary~\ref{C_qc-ext}, we see that the conclusion on the $k$-q.c. extendibility is obtained by using the explicit q.c.-extension for the evolution family~$(\varphi_{s,t})$ rather than that for Loewner chains.}
\end{remark}}

\subsection{Another q.c.-extendibility condition in terms of the first derivative}
Our next aim is to prove the following, analogous, to some extent, to~\cite[Theorem~3]{Hotta::Explicit}.
\begin{theorem}\label{TH_QCvia-derivative}
Fix $k\in[0,1)$ and let $h\colon\UH\to\Complex$ be a holomorphic function. Suppose there exists a closed hyperbolic disk $D\subset\UH$ of radius~$\frac12\log\frac{1+k}{1-k}$ and a holomorphic  function $f\colon\UH\to\Complex$ such that the following
two statements hold:
\begin{itemize}
\item[(A)] ${f(z)}/{f'(z)}-z\in D$ for all~$z\in\UH$, and
\item[(B)] $\big(h'(z)f(z)\big)^{-1}-z\in D$ for all~$z\in\UH$.
\end{itemize}
Then $h$ is univalent in~$\UH$ and has a $k$-q.c. extension to~$\ComplexE$.
\end{theorem}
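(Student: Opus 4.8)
The plan is to display $h$ as the initial element $g_0$ of a chordal Loewner chain of \emph{decreasing} type whose Herglotz vector field is $p(z):=f(z)/f'(z)$, and then to extract the $k$-q.c. extension of $h$ from (the decreasing counterpart of) Theorem~\ref{TH_alldetails}/Corollary~\ref{C_qc-ext}. Note first that hypothesis~(A) is exactly the hypothesis of Theorem~\ref{TH_sufficient-qc2} applied to $f$ with $a=0$; so, unpacking its proof (and the additional structure of such $f$ mentioned in the remark following that theorem, i.e.\ Lemma~\ref{LM_f}), we obtain: $p:=f/f'$ is a time-independent Herglotz function in $\UH$ with $p'(\infty)=1$, it generates a one-parameter semigroup $(\phi_t)_{t\ge0}$ of holomorphic self-maps of $\UH$ with Denjoy--Wolff point at $\infty$, the branch $\sigma:=\log f$ is univalent and solves the Abel equation $\sigma\circ\phi_t=\sigma+t$, and $\sigma(\UH)$ lies in a horizontal strip of width $\pi$ with $\bigcup_{t\ge0}\bigl(\sigma(\UH)-t\bigr)$ equal to that strip. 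In particular $f$ is univalent and nonvanishing, and $f(z)/z$ has a finite nonzero angular limit as $\UH\ni z\to\infty$.

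Next, set $g_t:=h\circ\phi_t$ for $t\ge0$. Differentiating in $t$ and using $\phi_t'=\sigma'/(\sigma'\circ\phi_t)$, one gets $\dot g_t=g_t'\cdot p$, so $(g_t)$ satisfies the decreasing chordal Loewner--Kufarev PDE with vector field $p=f/f'$, and $g_t=g_s\circ\phi_{t-s}$ for $t\ge s\ge0$; moreover $g_t(\UH)=h(\phi_t(\UH))$ shrinks as $t$ grows. The one thing that genuinely needs to be proved is that every $g_t$ --- equivalently, that $h=g_0$ --- is univalent in $\UH$, and it is here that hypothesis~(B) is used: rewriting~(B) as $h'(z)=\bigl(f(z)(z+\delta(z))\bigr)^{-1}$ with $\delta(z)$ ranging over the \emph{compact} set $D\subset\UH$, we see that $h'$ never vanishes, and, combining with the $\infty$-asymptotics of $f$ from the previous paragraph, that $\mathsf{R}_u(h,z)\ge c\,\Re z$ for all $z$ with $\Re z$ sufficiently large and some $c>0$. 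Since the DW point of $(\phi_t)$ is $\infty$, the sets $\phi_t(\UH)$ recede to $\infty$ as $t\to+\infty$, so running the normal-family/radius-of-univalence argument from the proof of Theorem~\ref{Th_suff-for-chordal-Loewner-chain} along the decreasing direction shows that each $g_t$ is injective on every compact subset of $\UH$, hence univalent. Thus $(g_t)$ is a bona fide decreasing Loewner chain of chordal type with associated Herglotz function $p=f/f'$.

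It remains to feed this into the q.c.-machinery. Because $D$ is a closed hyperbolic disk of hyperbolic radius $\tfrac12\log\tfrac{1+k}{1-k}$ --- the same as $U(k)$ --- it is the image of $U(k)$ under an automorphism $w\mapsto\alpha_0^{-1}w+i\beta_0$ of $\UH$ with $\alpha_0>0$, $\beta_0\in\Real$; hence~(A) gives $\alpha_0\bigl(p(z)-p'(\infty)z-i\beta_0\bigr)=\alpha_0\bigl(p(z)-z-i\beta_0\bigr)\in U(k)$ for all $z\in\UH$, which is precisely condition~\eqref{EQ_U(k)-unbdd} with $\alpha\equiv\alpha_0$, $\beta\equiv\beta_0$. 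Applying to $(g_t)$ the decreasing-chain analogue of Corollary~\ref{C_qc-ext} --- which one obtains from the remark following Theorem~\ref{TH_alldetails} by the same time-reparametrisation ($t\mapsto\alpha_0 t$) and conjugation by affine automorphisms of $\UH$ as in Remark~\ref{RM_applyAut} and the proof of Corollary~\ref{C_qc-ext} --- one concludes that $h=g_0$ has a $k$-q.c. extension to $\ComplexE$; transporting the extension formula of Theorem~\ref{TH_alldetails}(ii) back through the substitutions of the previous steps yields an explicit such extension, parallel to~\eqref{EQ_ext}. That it may be taken to fix $\infty$ follows, as in the proof of Theorem~\ref{TH_sufficient-qc2}, from the fact that $\infty$ is the DW point of $(\phi_t)$ (or from a direct growth estimate on $h$ coming from~(B)).

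The hard part will be the univalence step: condition~(B) is only a pointwise bound on $h'$, and converting it into global univalence of $h$ requires setting up the decreasing-chain version of the Loewner-chain criterion of Theorem~\ref{Th_suff-for-chordal-Loewner-chain}, where the precise value $\tfrac12\log\tfrac{1+k}{1-k}$ of the radius of $D$ and the $\infty$-asymptotics of $f$ (Lemma~\ref{LM_f}) must be used with care. Once univalence is in hand, everything else is a routine application of results already established.
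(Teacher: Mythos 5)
There is a genuine gap, and it sits at both of the load-bearing points of your plan. First, the q.c.-extension step cannot work as stated. Your family $(g_t)=(h\circ\phi_t)$ is a decreasing family with vector field $p=f/f'$ for \emph{every} univalent $h$, since $h$ enters only as a free post-composition of the semigroup $(\phi_t)$; the decreasing-chain remark after Theorem~\ref{TH_alldetails} (via \cite{contreraslocalduality}) concerns the \emph{unique} decreasing chain determined by the vector field, i.e.\ essentially the maps $\phi_t=\varphi_{0,t}$ themselves, and says nothing about an arbitrary initial element $g_0=h$. If your reduction were valid, then taking, say, $f(z)=z+1$ (which satisfies~(A) with $D\ni 1$) would force every univalent $h\colon\UH\to\Complex$ with nonvanishing, suitably decaying $h'$ to be $k$-q.c.\ extendible, which is false. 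The structural reason is that in the increasing setting the freedom in the chain is post-composition with a conformal map of the Loewner range $\Omega=\cup_t f_t(\UH)$, and range-normalization ($\Omega$ equal to $\Complex$ or a half-plane) reduces it to a M\"obius map; your decreasing family has no analogue of this, so condition~(B) must enter the construction of the chain itself, not merely as a local univalence bound. Second, the univalence transfer also fails in the decreasing direction: the identity available to you is $g_s=g_t\circ(\phi_t^{-1}\circ\phi_s)$, valid only on $\phi_s^{-1}(\phi_t(\UH))$, a subdomain that recedes as $t\to+\infty$; injectivity of $g_t$ on compacta for large $t$ (which is what the radius-of-univalence estimate near $\infty$ gives) therefore yields nothing about $h=g_0$ on compact subsets of $\UH$. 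The mechanism of Theorem~\ref{Th_suff-for-chordal-Loewner-chain}/Proposition~\ref{PR_LoewChain} needs the increasing identity $h_s=h_t\circ\varphi_{s,t}$ with $\varphi_{s,t}$ a globally defined injective self-map of $\UH$, together with a univalence estimate that is uniform in $t$ for the whole family, not just for $h$.

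For comparison, the paper's proof uses both hypotheses to build an \emph{increasing} chain with $h$ as initial element: $h_t:=h-(e^t-1)/f$, which solves $\partial h_t/\partial t=-h_t'(z)\,p(z,t)$ with $p=1/q$, $q(z,t)=e^{-t}A(z)+(1-e^{-t})B(z)$, $A=h'f$, $B=f'/f$. Conditions (A) and (B) say exactly that $A(z),B(z)\in D_z:=\{w\colon 1/w-z\in D\}$, and convexity of $D_z$ gives $p(z,t)-z\in D$ for all $t$, i.e.\ condition~\eqref{EQ_U(k)-unbdd} with constant $\alpha,\beta$. Univalence of $h=h_0$ then comes from Proposition~\ref{PR_LoewChain} together with the uniform-in-$t$ local univalence of the specific functions $h_t$ near $\infty$ (Lemma~\ref{LM_h_t-univalence}, where the term $-(e^t-1)/f$ and the asymptotics of $f$ from Lemma~\ref{LM_f} are what make the rescaled limits univalent), and the q.c.\ conclusion comes from Corollary~\ref{C_qc-ext}(ii) once Lemma~\ref{LM_h_t-image-union} shows the Loewner range is a half-plane, i.e.\ the chain is range-normalized. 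Your proposal replaces this interpolating chain by $h\circ\phi_t$ and uses (B) only for a pointwise bound on $h'$; that is not enough, and the missing ingredients (an increasing chain through $h$ with controlled Herglotz function, and control of its Loewner range) are precisely where the real work of the theorem lies.
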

\begin{remark}
The q.c.-extension of~$h$ constructed in the proof of the above theorem has a finite value at~$\infty$.
\end{remark}

Before giving a proof of the above theorem we deduce a curious consequence.
\begin{corollary}\label{CR_QCvia-derivative}
Let $\psi:\UD\to\Complex$ be a holomorphic function. If the inequality
\begin{equation}\label{EQ_varphi-bound-k}
\mathrlap{\left|\frac{1-\psi'(\zeta)}{1+\zeta\psi'(\zeta)}\right|\le k<1}
\hphantom{\left|\frac{1-\psi'(z)}{1+z\psi'(z)}\right|\le 1\quad\text{for all~$z\in\UD$},}
\end{equation}
holds for all~$\zeta\in\UD$ and some~$k\in[0,1)$, then $\psi$ admits a $k$-q.c. extension to~$\ComplexE$.
\end{corollary}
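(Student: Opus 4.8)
**The plan is to derive Corollary~\ref{CR_QCvia-derivative} from Theorem~\ref{TH_QCvia-derivative} by transporting everything from the disk to the half-plane via the Cayley map and choosing the auxiliary function~$f$ judiciously.** First I would pass from~$\psi:\UD\to\Complex$ to a function on~$\UH$ by setting $z:=(1+\zeta)/(1-\zeta)$, i.e.\ $\zeta=(z-1)/(z+1)$, so that the Cayley transform $C(\zeta):=(1+\zeta)/(1-\zeta)$ maps $\UD$ onto $\UH$. Define $h:\UH\to\Complex$ by $h:=\psi\circ C^{-1}$, so that $h(z)=\psi\big((z-1)/(z+1)\big)$; then $h'(z)=\psi'(\zeta)\,(C^{-1})'(z)$, and since $(C^{-1})'(z)=2/(z+1)^2$ we get $h'(z)=\tfrac{2}{(z+1)^2}\,\psi'\big((z-1)/(z+1)\big)$. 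The goal is to check that $h$ satisfies hypotheses (A) and~(B) of Theorem~\ref{TH_QCvia-derivative} for a suitable holomorphic $f:\UH\to\Complex$ and a suitable closed hyperbolic disk $D\subset\UH$ of radius $\tfrac12\log\tfrac{1+k}{1-k}$; once that is done, the theorem yields a $k$-q.c.\ extension of~$h$ to~$\ComplexE$, and pulling back by the (conformal, hence q.c.-neutral) Cayley map gives the $k$-q.c.\ extension of~$\psi$, with the value at~$\infty$ being finite but not necessarily~$\infty$ —consistent with the statement, which does not claim a fixed point.

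**The key computational step is the choice of~$f$.** The natural guess is $f(z):=z+1$ (the image of $1-\zeta$ up to scaling), or more generally $f(z):=c(z+1)$ for a constant $c\in\Complex^*$; with such an~$f$ one has $f(z)/f'(z)=z+1$, so $f(z)/f'(z)-z=1$, which is the center of the disk $U(k)$ —hence (A) holds trivially with $D=U(k)$. With this choice, $\big(h'(z)f(z)\big)^{-1}=\big(c\,h'(z)\,(z+1)\big)^{-1}$, and substituting $h'(z)=\tfrac{2}{(z+1)^2}\psi'(\zeta)$ gives $\big(h'(z)f(z)\big)^{-1}=\tfrac{(z+1)}{2c\,\psi'(\zeta)}$. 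Then $\big(h'(z)f(z)\big)^{-1}-z=\tfrac{z+1}{2c\psi'(\zeta)}-z$. I would now rewrite this in terms of $\zeta$ using $z=(1+\zeta)/(1-\zeta)$ and $z+1=2/(1-\zeta)$, obtaining $\big(h'(z)f(z)\big)^{-1}-z=\tfrac{1}{c(1-\zeta)\psi'(\zeta)}-\tfrac{1+\zeta}{1-\zeta}$, and after clearing the common factor $1/(1-\zeta)$ this becomes $\tfrac{1}{1-\zeta}\big(\tfrac{1}{c\psi'(\zeta)}-(1+\zeta)\big)$. With the choice $c=1$ this is $\tfrac{1}{1-\zeta}\cdot\tfrac{1-(1+\zeta)\psi'(\zeta)}{\psi'(\zeta)}$. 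The condition I want, membership of this quantity in $U(k)$ (i.e.\ $|w-1|\le k|w+1|$), should reduce —after simplification— exactly to inequality~\eqref{EQ_varphi-bound-k}, namely $\big|\tfrac{1-\psi'(\zeta)}{1+\zeta\psi'(\zeta)}\big|\le k$. I would verify this by a direct manipulation: compute $w-1$ and $w+1$ for $w=\big(h'(z)f(z)\big)^{-1}-z$ and check that the ratio $(w-1)/(w+1)$ equals, up to an explicit nonzero factor that has modulus~$1$ on the relevant locus (or is simply absorbed), the quantity $(1-\psi'(\zeta))/(1+\zeta\psi'(\zeta))$.

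**I expect the main obstacle to be precisely this algebraic identification**: making the substitution come out so that the two M\"obius-type expressions match on the nose, and in particular confirming that the spurious factor coming from the Cayley change of variables has modulus one (so that the disk $U(k)$ is genuinely preserved and no loss in~$k$ occurs). If the factor does not have modulus one pointwise, the fix is standard: since $U(k)$ is a hyperbolic disk in~$\UH$ of radius $\tfrac12\log\tfrac{1+k}{1-k}$ centered at~$1$, and automorphisms of~$\UH$ act transitively on points while preserving hyperbolic radii, one may absorb the offending factor by replacing $U(k)$ with a congruent hyperbolic disk~$D$ centered elsewhere —exactly the flexibility Theorem~\ref{TH_QCvia-derivative} was stated to allow, and exactly the device used in the proof of Theorem~\ref{TH_sufficient-qc2}. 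A secondary, routine point to record is that $\psi'$ does not vanish in~$\UD$ (immediate from~\eqref{EQ_varphi-bound-k}, since $w=0$ would force $|{-1}|\le k|1|<1$, a contradiction), which guarantees $h'\ne0$ and makes the reciprocals in (A), (B) well-defined; and that $f(z)=z+1$ is holomorphic and nonvanishing on~$\UH$, so the hypotheses of Theorem~\ref{TH_QCvia-derivative} are meaningful. Once the identification is in place, the corollary follows at once.
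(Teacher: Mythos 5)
Your overall route---transplanting $\psi$ to $\UH$, taking $f$ affine so that (A) holds with $D=U(k)$, and reading \eqref{EQ_varphi-bound-k} off condition (B) of Theorem~\ref{TH_QCvia-derivative}---is exactly the paper's, but the algebraic identification you defer to does not come out for your choice of transplant, and neither of your proposed repairs can save it. With $h(z)=\psi\bigl(\tfrac{z-1}{z+1}\bigr)$ and $f(z)=c(z+1)$, writing $\zeta=\tfrac{z-1}{z+1}$ and $w:=\bigl(h'(z)f(z)\bigr)^{-1}-z$, one finds
\begin{equation*}
w=\frac{1-c(1+\zeta)\psi'(\zeta)}{c(1-\zeta)\psi'(\zeta)},\qquad
\frac{w-1}{w+1}=\frac{1-2c\,\psi'(\zeta)}{1-2c\,\zeta\psi'(\zeta)},
\end{equation*}
so (B) with $D=U(k)$ asks for $\bigl|\tfrac{1-2c\psi'}{1-2c\zeta\psi'}\bigr|\le k$. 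No constant $c$ turns this into \eqref{EQ_varphi-bound-k}: you would need $2c=1$ in the numerator but $2c=-1$ in the denominator, and the M\"obius maps $v\mapsto\tfrac{1-v}{1+\zeta v}$ and $v\mapsto\tfrac{1-v}{1-\zeta v}$ differ by more than a unimodular factor (their poles differ for $\zeta\neq0$). Recentering $D$ does not help either: if $c\neq\tfrac12$, already $\psi=\mathrm{id}$ gives $\tfrac{w-1}{w+1}=\tfrac{1-2c}{1-2c\zeta}\to1$ as $\zeta\to1$, so $w$ leaves every compact subset of $\UH$; and for $c=\tfrac12$ and $k>0$ the function with $\psi'(\zeta)=\tfrac{1+k}{1-k\zeta}$ satisfies \eqref{EQ_varphi-bound-k} with equality (the left-hand side is $\equiv -k$), yet $\tfrac{w-1}{w+1}=\tfrac{-k(1+\zeta)}{1-(1+2k)\zeta}$ blows up at $\zeta=\tfrac1{1+2k}\in(0,1)$, i.e.\ $w=-1\notin\UH$ there, so (B) fails for \emph{every} closed hyperbolic disk $D\subset\UH$.

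The missing ingredient is a reflection in the Cayley normalization. The paper takes $h(z):=-\tfrac12\psi\bigl(\tfrac{1-z}{1+z}\bigr)$, $f(z):=z+1$, $D:=U(k)$: then $h'(z)=\psi'(\zeta)/(1+z)^2$ with $\zeta=\tfrac{1-z}{1+z}$, hence $w=\bigl(h'f\bigr)^{-1}-z=\tfrac{1+z-z\psi'(\zeta)}{\psi'(\zeta)}$, and using $1-z=\zeta(1+z)$ one gets $\tfrac{w-1}{w+1}=\tfrac{1-\psi'(\zeta)}{1+\zeta\psi'(\zeta)}$ on the nose, so (B) is literally \eqref{EQ_varphi-bound-k}, while (A) is trivial because $f/f'-z\equiv1$. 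Equivalently, your computation with $c=\tfrac12$ proves the corollary for $\tilde\psi(\zeta):=-\psi(-\zeta)$, since $\tfrac{1-\tilde\psi'(\zeta)}{1-\zeta\tilde\psi'(\zeta)}=\tfrac{1-\psi'(-\zeta)}{1+(-\zeta)\psi'(-\zeta)}$; applying it to $\tilde\psi$ and conjugating back by the M\"obius maps $\zeta\mapsto-\zeta$ and $w\mapsto-w$ would also recover the statement for $\psi$. Your remaining points (nonvanishing of $\psi'$ and hence of $h'$, invariance of $k$-quasiconformality under the conformal change of variables, no fixed point at $\infty$ being claimed) are fine once the identification is corrected.
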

\begin{proof} It is sufficient to apply Theorem~\ref{TH_QCvia-derivative} to $D:=U(k)$, $f(z):=z+1$, and ${h(z):=-\frac12\psi(\frac{1-z}{1+z})}$ for all~$z\in\UH$.
\end{proof}

\begin{remark}
From the above corollary it follows that the inequality
\begin{equation}\label{EQ_varphi-bound-1}
\left|\frac{1-\psi'(z)}{1+z\psi'(z)}\right|\le 1\quad\text{for all~$z\in\UD$}
\end{equation}
is a sufficient condition for a non-constant holomorphic function~$\psi$ in~$\UD$ to be univalent.
Indeed, if $\psi$ satisfies~\eqref{EQ_varphi-bound-1}, then it is the locally uniform limit, as ${k\to1^-}$,  of the functions $$\psi_k(z):=\psi(0)+\int_0^z\dfrac{1-k\varphi(\zeta)}{1+k\zeta\varphi(\zeta)}d\zeta,\quad z\in\UD,\quad\varphi(\zeta):=\frac{1-\psi'(z)}{1+z\psi'(z)},$$
satisfying~\eqref{EQ_varphi-bound-k}.

However, this does not constitute a new univalence criterion. Rewriting~\eqref{EQ_varphi-bound-1} in the form $|1-\psi'(z)|^2\le|1+z\psi'(z)|^2$, we see that it implies the inequality $2\,\Re\big(\psi'(z)/g'(z)\big)\ge(1-|z|^2)|\psi'(z)|^2$, where $g(z):=\log(1+z)$ is a convex function, from which it follows that $\psi$ is close-to-convex, see, e.g., \cite[\S2.6]{Duren:book}. Moreover, if $\psi'$ is continuous up to the boundary and $1+z\psi'(z)$ does not vanish in~$\UD$, then the condition ${\Re\big(\psi'(z)/g'(z)\big)>0}$ for all $z\in\UD$ implies~\eqref{EQ_varphi-bound-1}.
Therefore, one can regard Corollary~\ref{CR_QCvia-derivative} as a ``q.c.-version'' of the close-to-convexity with a particular choice of the function~$g$. In this connection, it would be interesting to see whether a similar relation can be found between Theorem~\ref{TH_QCvia-derivative} and a suitable analogue of close-to-convexity for the half-plane.
\end{remark}

One of the ``ingredients'' in the proof of Theorem~\ref{TH_QCvia-derivative} is the following modification of~Theorem~\ref{Th_suff-for-chordal-Loewner-chain}.

\begin{proposition}
\label{PR_LoewChain}
Let $(h_{t})_{t \ge 0}$ be a family of holomorphic functions in $\UH$ satisfying the following conditions:
\begin{enumerate}
\item[(i)] the map $\UH\times[0,+\infty)\ni (z,t)\mapsto h_t(z)$ is continuous and for each~$z\in\UH$ the map $[0,+\infty)\ni t\mapsto h_t(z)$ is locally absolutely continuous;
\item[(ii)] there exists a constant $C_1>0$ and a Herglotz function $p :\UH\times[0,+\infty) \to \{w\colon\Re w> C_1\}$ such that for each~$z\in\UH$,
\begin{equation}\label{EQ_LoewPDE_a.e.}
\frac{\partial h_t(z)}{\partial t}=-h_t'(z)p(z,t)\quad \text{for a.e. $t\ge0$};
\end{equation}
\item[(iii)] there exists $a>0$ such that $$\displaystyle \inf_{\Re z > a + C_{1}t} \mathsf{R}_u^{\mathsf{hyp}}(h_{t}, z) \to \infty\quad\text{ as~~~$t \to +\infty$,}$$ where $\mathsf{R}_u^{\mathsf{hyp}}(f,z)$ stands for the radius of the largest hyperbolic disk in $\UH$ centered at $z$ in which $f$ is univalent.
\end{enumerate}
Then, $(h_{t})_{t \ge 0}$ is a chordal Loewner chain in $\UH$ associated with the Herglotz function $p$.
\end{proposition}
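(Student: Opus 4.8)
The plan is to follow the proof of Theorem~\ref{Th_suff-for-chordal-Loewner-chain}, but to replace the Koebe-type distortion estimate used there --- which rested on the two-sided bound $C_1<\Re p<C_2$ and, via Proposition~\ref{PR_fill_allC_chordal}, on the Loewner range being all of~$\Complex$ --- by a purely hyperbolic-geometric argument suited to the form of hypothesis~(iii). The point is that here no upper bound on $\Re p$ is assumed, so Proposition~\ref{PR_fill_allC_chordal} is unavailable and the Loewner range of $(h_t)$ need not be~$\Complex$; this is precisely why hypothesis~(iii) is formulated via the \emph{hyperbolic} radius of univalence. First I would introduce the chordal evolution family $(\varphi_{s,t})$ associated with~$p$ and record two facts: integrating the Loewner ODE $\tfrac{d}{dt}\varphi_{s,t}(z)=p\big(\varphi_{s,t}(z),t\big)$ together with $\Re p>C_1$ gives the speed estimate $\Re\varphi_{s,t}(z)>\Re z+C_1(t-s)$ for all $z\in\UH$ and $t\ge s\ge0$ (hence $\varphi_{s,t}(\UH)\subset\UH$); and, since $(z,t)\mapsto h_t(z)$ solves the Loewner PDE~\eqref{EQ_LoewPDE_a.e.} in the sense of \cite[Definition~2.1]{contreraslocalduality} --- which is exactly what hypotheses~(i) and~(ii) provide --- \cite[Proposition~2.3]{contreraslocalduality} yields the identity $h_t\circ\varphi_{s,t}=h_s$ for all $t\ge s\ge0$, exactly as in the proof of Theorem~\ref{Th_suff-for-chordal-Loewner-chain}.

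The crucial step will be to show that every $h_s$ is univalent in~$\UH$; granted this, the range-nesting $h_s(\UH)=h_t\big(\varphi_{s,t}(\UH)\big)\subseteq h_t(\UH)$ is immediate, and \cite[Lemma~3.2]{MR2789373} identifies $(h_t)$ as a Loewner chain associated with~$(\varphi_{s,t})$ --- which, as $(\varphi_{s,t})$ is the chordal evolution family of~$p$, means $(h_t)$ is a chordal Loewner chain associated with~$p$. To prove univalence of $h_s$, I would fix $s\ge0$ and an arbitrary compact set $K\subset\UH$ and show that $h_s$ is injective on~$K$. Put $b:=a+C_1 s+1\in\UH$ and $z(t):=\varphi_{s,t}(b)$, so that the speed estimate gives $\Re z(t)>a+C_1 t$ for all $t\ge s$. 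Letting $d_{\UH}$ be the hyperbolic distance in~$\UH$ and $\Delta:=\sup_{z\in K}d_{\UH}(z,b)<\infty$, the Schwarz--Pick lemma (applied to $\varphi_{s,t}\in\Hol(\UH,\UH)$) gives $d_{\UH}\big(\varphi_{s,t}(z),z(t)\big)\le d_{\UH}(z,b)\le\Delta$ for all $z\in K$, so $\varphi_{s,t}(K)$ lies in the hyperbolic disk of radius~$\Delta$ about~$z(t)$.

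Now, since $\Re z(t)>a+C_1 t$, hypothesis~(iii) supplies some $T(K)\ge s$ with $\mathsf{R}_u^{\mathsf{hyp}}(h_t,z(t))>\Delta$ for all $t\ge T(K)$, so $h_t$ is univalent on the hyperbolic disk of radius~$\Delta$ about~$z(t)$, hence on $\varphi_{s,t}(K)$. As each $\varphi_{s,t}$ is itself univalent in~$\UH$ (being an element of an evolution family), the composition $h_t\circ\varphi_{s,t}=h_s$ is injective on~$K$ for every $t\ge T(K)$; and since $K\subset\UH$ was arbitrary, $h_s$ is univalent in~$\UH$.

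I expect the only point needing care to be the geometric matching in the last step: one must check that the forward flow $\varphi_{s,t}$ pushes the compact set~$K$ into the region $\{\Re z>a+C_1 t\}$, where~(iii) gives control, while keeping the hyperbolic diameter of $\varphi_{s,t}(K)$ bounded. Each of these is a one-line consequence of a single input --- the one-sided estimate $\Re p>C_1$ for the former, the Schwarz--Pick non-expansiveness of $\varphi_{s,t}$ for the latter --- so no genuine obstacle arises; the remaining ingredients, namely the identity $h_t\circ\varphi_{s,t}=h_s$ and the implication ``univalence plus range-nesting $\Rightarrow$ Loewner chain'', are standard and are taken from \cite{contreraslocalduality} and \cite{MR2789373}, respectively.
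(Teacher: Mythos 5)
Your proposal is correct and follows essentially the same route as the paper: establish $h_s=h_t\circ\varphi_{s,t}$ via the local-duality result, use the drift bound $\Re p>C_1$ together with the Schwarz--Pick lemma to push a hyperbolically bounded set under $\varphi_{s,t}$ into the region $\{\Re z>a+C_1t\}$ where hypothesis~(iii) gives univalence of $h_t$, deduce univalence of each $h_s$, and conclude with \cite[Lemma~3.2]{MR2789373}. The only (immaterial) difference is that you exhaust $\UH$ by compact sets with a fixed base point, whereas the paper uses hyperbolic disks of arbitrarily large radius centered at a point with $\Re z_0>a+C_1s$.
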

\begin{proof}
Denote by~$(\varphi_{s,t})$ the evolution family associated with the Herglotz function~$p$.
As in the proof of Theorem~\ref{Th_suff-for-chordal-Loewner-chain},  we see that
\begin{equation}\label{EQ_hsht}
h_s=h_t\circ\varphi_{s,t}\quad \text{ for any $s\ge0$ and any $t\ge s$.}
\end{equation}
Using this equality, we will show that condition~(iii) implies that $h_s$ is univalent in~$\UH$ for all~$s\ge0$, which in its turn implies the conclusion of Proposition~\ref{PR_LoewChain}. To this end, fix any~$s\ge0$ and choose a point~$z_0\in\UH$ with $\Re z_0>a+C_1 s$.

Let $D$ be any hyperbolic disk in~$\UH$ centered at~$z_0$. Denote $z(t):=\varphi_{s,t}(z_0)$.
Since ${\Re p\ge C_1}$, we have $\Re z(t)\ge a+C_1 t$ for all $t\ge s$. By the Schwarz\,--\,Pick Lemma, for every $t\ge s$, $\varphi_{s,t}(D)$ lies in the hyperbolic disk $D(t)$ centered at~$z(t)$ and having the same radius as the disk $D$.
By condition~(iii), $h_t$ is univalent in~$D(t)$ provided $t\ge s$ is large enough. From~\eqref{EQ_hsht} it then follows that $h_s$ is univalent in~$D$. Since the radius of $D$ can be chosen arbitrarily large, this shows that $h_s$ is univalent in~$\UH$ and hence the proof is complete.
\end{proof}

\subsection{Lemmas}
The following three lemmas will be used in the proof of Theorem~\ref{TH_QCvia-derivative}.
\begin{lemma}\label{LM_f}
Let $f\colon\UH\to\Complex$ be a holomorphic function satisfying condition~(A) in Theorem~\ref{TH_QCvia-derivative}. Then the following assertions hold:
\begin{enumerate}
\item[(a)] $\lambda f(\UH) \supset f(\UH)$ for any $\lambda \in (0,1]$;
\item[(b)] $\vphantom{\displaystyle{\int_1^1}}\bigcup_{\lambda \in (0,1]} \lambda f(\UH) = e^{i\theta} \UH$ for some $\theta \in \R$;
\item[(c)] the limit $f'(\infty):=\displaystyle\vphantom{\displaystyle\int\limits_1}\lim_{\UH \ni z \to \infty}{f(z)}/{z}$ exists finitely, with $e^{-i\theta}f'(\infty) > 0$;
\item[(d)] $f'(z) \to f'(\infty)$ as $\UH \ni z \to \infty$;
\item[(e)] $f$ and $1/f$ are bounded on every bounded subset of~$\UH$.
\end{enumerate}
\end{lemma}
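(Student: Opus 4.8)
The plan is to recycle the one-parameter semigroup picture already set up in the proof of Theorem~\ref{TH_sufficient-qc2}. Put $d(z):=f(z)/f'(z)-z$, so that (A) reads $d(z)\in D$ for all $z\in\UH$, where $D\subset\UH$ is a closed (hyperbolic, equivalently Euclidean) disk, hence a compact subset of $\UH$; set $m:=\min_{w\in D}\Re w>0$. Then $\Re\big(f(z)/f'(z)\big)=\Re z+\Re d(z)>m>0$, so $f'$ never vanishes, $f$ never vanishes (a zero of $f$ at $z_0\in\UH$ would force $f/f'-z$ to take the value $-z_0\notin\overline{\UH}\supset D$ there), and $G:=f/f'$ is a holomorphic infinitesimal generator in $\UH$ with Denjoy\,--\,Wolff point $\infty$ and $G'(\infty)=1$ (as $G(z)-z=d(z)$ is bounded). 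Exactly as in the proof of Theorem~\ref{TH_sufficient-qc2}, the evolution family associated with the autonomous Herglotz function $p:=G$ is the one-parameter semigroup $(\phi_t)$ generated by $G$, one has $f\circ\phi_t=e^{t}f$ for all $t\ge0$, and, since $\Re(f'/f)=\Re(1/G)>0$, the function $\sigma:=\log f$ (any branch) is a well-defined univalent solution of the Abel equation $\sigma\circ\phi_t=\sigma+t$. Invoking the analysis in that proof, there is $\theta\in\Real$ with $\sigma(\UH)\subset\mathcal D:=\{w\colon\theta-\pi/2<\Im w<\theta+\pi/2\}$ and $\bigcup_{t\ge0}\big(\sigma(\UH)-t\big)=\mathcal D$.

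Granting this, (a) is immediate: for $\lambda=e^{-t}\in(0,1]$ the identity $f\circ\phi_t=e^{t}f$ gives $e^{t}f(\UH)=f\big(\phi_t(\UH)\big)\subset f(\UH)$, i.e. $f(\UH)\subset\lambda f(\UH)$. For (b), since $\exp$ maps the horizontal strip $\mathcal D$ onto the rotated half-plane $e^{i\theta}\UH$, we get $\lambda f(\UH)=e^{-t}\exp\sigma(\UH)=\exp\big(\sigma(\UH)-t\big)$, so taking the union over $t\ge0$ yields $\bigcup_{\lambda\in(0,1]}\lambda f(\UH)=\exp\mathcal D=e^{i\theta}\UH$; in particular $f(\UH)\subset e^{i\theta}\UH$.

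For (c) I would integrate $\sigma'=f'/f=1/\big(\zeta+d(\zeta)\big)$ along paths in the convex domain $\UH$: $\log f(z)=\log f(1)+\log z+\int_1^{z}\big(\tfrac{1}{\zeta+d(\zeta)}-\tfrac1\zeta\big)\,d\zeta$, and the last integrand is holomorphic and $O(|\zeta|^{-2})$ uniformly as $\zeta\to\infty$ in $\UH$, so the integral has a finite limit as $z\to\infty$ in $\UH$; hence $f(z)/z$ tends to a finite nonzero limit $f'(\infty)$. Since $e^{-i\theta}f$ maps $\UH$ into $\UH$ by (b), its unrestricted (hence angular) limit $e^{-i\theta}f'(\infty)$ at $\infty$ is a nonnegative real by the half-plane Julia\,--\,Wolff\,--\,Carath\'eodory theorem (alternatively, let $z=Re^{i\psi}$ with $\psi\to\pm\pi/2$ and use $|\arg(e^{-i\theta}f)|<\pi/2$); being nonzero, $e^{-i\theta}f'(\infty)>0$. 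Assertion (d) then follows from $f'(z)=\tfrac{f(z)}{z}\cdot\tfrac{z}{z+d(z)}\to f'(\infty)\cdot 1$. For (e), $|\sigma'(z)|=|z+d(z)|^{-1}\le\big(\Re z+\Re d(z)\big)^{-1}<1/m$ throughout $\UH$, and $\UH$ is convex, so $\sigma$ is $(1/m)$-Lipschitz and extends continuously to $\overline{\UH}$; thus $\log|f|=\Re\sigma$ is bounded on every bounded subset of $\UH$, which gives the stated bounds on $f$ and $1/f$.

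The only genuinely delicate point is the last part of (c): existence of the finite limit $f'(\infty)$ is a routine estimate, but matching its argument with the rotation angle $\theta$ of part~(b) is what forces us to combine (b) with the Julia\,--\,Wolff\,--\,Carath\'eodory input; everything else is a direct consequence of the boundedness of $d=f/f'-\mathrm{id}_\UH$ together with the semigroup and Abel-equation picture imported from the proof of Theorem~\ref{TH_sufficient-qc2}.
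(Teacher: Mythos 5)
Your proof is correct and follows essentially the same route as the paper's: assertions (a) and (b) are imported from the semigroup/Abel-equation analysis in the proof of Theorem~\ref{TH_sufficient-qc2}, (c)--(d) come from the boundedness of $f/f'-\id_\UH$ (so that $\bigl(\log(f(z)/z)\bigr)'=O(|z|^{-2})$) together with the half-plane Julia\,--\,Wolff\,--\,Carath\'eodory theorem applied to $e^{-i\theta}f$, and (e) from the bound $|(\log f)'|<1/m$ on the convex domain $\UH$. The only differences are cosmetic: you unpack the citation to the earlier proof instead of quoting it, and you add an elementary ray-argument alternative for the positivity of $e^{-i\theta}f'(\infty)$.
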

\begin{proof}
According to~(A), $f$ satisfies condition~(ii) of Theorem~\ref{TH_sufficient-qc2} with~$a=0$. Assertions~(a) and~(b) follow directly from the proof of Theorem~\ref{TH_sufficient-qc2}.

Note also that by~(A), $f$ does not vanish and hence $g(z):=\log \big(f(z)/z\big)$ is well defined in~$\UH$ (The choice of the branch is not important). Moreover, condition (A) implies that $|g'(z)|<M/|z|^2$ for all $z\in\UH$ and some constant~$M>0$. Therefore, $g(z)$ has a finite limit as $\UH\ni z\to\infty$, which implies the existence of a non-vanishing limit in~(c). Furthermore, again by~(A), $e^{g(z)}/f'(z)=\big(f(z)/f'(z)\big)/z\to1$ as $\UH\ni z\to\infty$, This proves~(d).

To see that ${e^{-i\theta}f'(\infty)>0}$ it is sufficient now to apply the Julia\,--\,Wolff\,--\,Carath\'eodory Theorem for the half-plane, see, e.g.~\cite[Ch.\,IV~\S26]{Valiron}, to the function $e^{-i\theta}f$. This completes the proof of~(c).

Now consider the function~$q(z):=\log f(z)$, $z\in\UH$. From~(A) it follows that $$\Re \big(q\,'(z)\big)^{-1}>\min\{ \Re \zeta\colon\zeta\in D\}>0\quad \text{for all~$z\in\UD$.}$$ Hence $q\,'$ is bounded in~$\UH$. This easily implies assertion~(e).
\end{proof}

\begin{lemma}\label{LM_h_t-image-union}
Under the conditions of Theorem~\ref{TH_QCvia-derivative},
$$
\Omega:=\bigcup_{t \ge 0} h_{t}(\UH) = w_0-e^{-i\theta}\UH,
$$
where $w_0\in\Complex$, $h_t:=h-(e^t-1)/f$ for all~$t\ge0$, and $\theta\in\Real$ is the same as in Lemma~\ref{LM_f}.
\end{lemma}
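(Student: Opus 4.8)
The plan is to rewrite $\Omega$ as a union of half-lines and to combine this with an integral representation of $h$ obtained from (A) and (B) through the one-parameter semigroup generated by $f/f'$. Write $g:=1/f$. From (B), $h'=\big(f\cdot(\,\cdot\,+d_B)\big)^{-1}$ with $d_B(\cdot)$ valued in the bounded set $D$, so Lemma~\ref{LM_f}(c) gives $|h'(z)|=O(|z|^{-2})$ as $\UH\ni z\to\infty$; hence $h$ extends continuously to $\infty$, and I put $w_0:=\lim_{\UH\ni z\to\infty}h(z)$ (and $g(z)\to0$ there as well). Because $t\mapsto e^{t}-1$ is a bijection of $[0,\infty)$, the family $(h_t)$ is, up to reindexing, $(h-\tau g)_{\tau\ge0}$, so
\[
\Omega=\bigcup_{\tau\ge0}(h-\tau g)(\UH)=\bigcup_{z\in\UH}R_z,\qquad R_z:=\{h(z)-\tau g(z):\tau\ge0\}.
\]
By Lemma~\ref{LM_f}(b), $g(\UH)\subset e^{-i\theta}\UH$, so along each half-line $R_z$ the function $\Re(e^{i\theta}\cdot)$ strictly decreases, starting from its value at $h(z)$. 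Hence, with $U:=w_0-e^{-i\theta}\UH=\{\zeta:\Re(e^{i\theta}\zeta)<\Re(e^{i\theta}w_0)\}$, one has $R_z\subset U\iff h(z)\in U$, and the lemma splits into the two inclusions $h(\UH)\subset U$ (equivalent to $\Omega\subset U$) and $U\subset\Omega$.

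For $\Omega\subset U$ I would argue as follows. Since $f$ satisfies (A), i.e.\ hypothesis (ii) of Theorem~\ref{TH_sufficient-qc2} with $a=0$, there is a one-parameter semigroup $(\psi_\sigma)$ in $\UH$ with infinitesimal generator $f/f'$, Denjoy--Wolff point $\infty$, and K{\oe}nigs function $\varsigma:=\log f$; as in the proof of that theorem (cf.\ Lemma~\ref{LM_f}), $\varsigma$ is univalent, $\varsigma\circ\psi_\sigma=\varsigma+\sigma$ (hence $g\circ\psi_\sigma=e^{-\sigma}g$), and $\varsigma(\UH)$ lies in a strip of width $\pi$. Differentiating $h\circ\psi_\sigma$ and using (A), (B) gives $\frac{d}{d\sigma}h(\psi_\sigma(z))=h'(\psi_\sigma(z))\,f(\psi_\sigma(z))/f'(\psi_\sigma(z))=e^{-\sigma}g(z)\,\rho(\psi_\sigma(z))$, where $\rho:=f^{2}h'/f'$; by (A), (B) one has $\rho=(\,\cdot\,+d_A)/(\,\cdot\,+d_B)$ with $d_A,d_B$ valued in $D$, so $\rho$ is bounded (here $D$ is at positive distance from $i\R$). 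Since the DW-point is $\infty$, $h(\psi_\sigma(z))\to w_0$, and integrating,
\[
w_0-h(z)=g(z)\,I(z),\qquad I(z):=\int_0^{\infty}e^{-\sigma}\rho(\psi_\sigma(z))\,d\sigma ,
\]
with $I$ bounded. Using $g(z)=e^{\sigma}g(\psi_\sigma(z))$, the inequality $\Re\big(e^{i\theta}(w_0-h(z))\big)>0$ reduces to the pointwise estimate
\[
\Re\!\Big(e^{i\theta}\,\frac{f(\zeta)h'(\zeta)}{f'(\zeta)}\Big)>0\qquad(\zeta\in\UH),
\]
since that makes every integrand in $\Re\big(e^{i\theta}(w_0-h(z))\big)=\int_0^\infty e^{-\sigma}\Re\big(e^{i\theta}g(z)\rho(\psi_\sigma(z))\big)\,d\sigma$ positive. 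I expect this last pointwise estimate to be the main obstacle: it is where (A) and (B) must be used jointly, writing $e^{i\theta}fh'/f'=e^{i\theta}g\cdot(\zeta+d_A)/(\zeta+d_B)$, noting $e^{i\theta}g(\zeta)\in\UH$ by Lemma~\ref{LM_f}, and exploiting that translation by $\zeta$ ($\Re\zeta\ge0$) is a hyperbolic contraction of $\UH$ — so that $\zeta+d_A$ and $\zeta+d_B$ are hyperbolically close, $\diam_\UH D=\log\frac{1+k}{1-k}$ — together with the control of $\arg f(\zeta)=\Im\varsigma(\zeta)$ coming from the strip. I would isolate this as a separate estimate.

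Finally, for $U\subset\Omega$: from Lemma~\ref{LM_f}(b), taking reciprocals, $\bigcup_{\mu\ge1}\mu\,g(\UH)=e^{-i\theta}\UH$; and since $\varsigma(\UH)$ is invariant under $w\mapsto w+\sigma$, $\sigma\ge0$, while $\bigcup_{\sigma\ge0}[\varsigma(\UH)-\sigma]$ is the whole strip, for every $\beta\in(\theta-\frac\pi2,\theta+\frac\pi2)$ there are points $z\in\UH$ with $\arg f(z)$ near $\beta$ and $|f(z)|$ arbitrarily large. Given $q\in U$, we have $w_0-q\in e^{-i\theta}\UH$, so $-\arg(w_0-q)\in(\theta-\frac\pi2,\theta+\frac\pi2)$; letting $z$ move along a short arc on which $\arg f(z)$ passes through $-\arg(w_0-q)$ while $|f(z)|$ stays large, the quantity $(h(z)-q)/g(z)=I(z)+(w_0-q)f(z)$ has argument sweeping through $0$ (the second term has argument $\arg(w_0-q)+\arg f(z)$ and large modulus, and $I$ is bounded), so at some point on the arc it equals a positive real $\tau$; then $q=h(z)-\tau g(z)\in R_z\subset\Omega$. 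Combining the two inclusions yields $\Omega=w_0-e^{-i\theta}\UH$.
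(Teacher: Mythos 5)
Your reduction of the inclusion $\Omega\subset U$ to the pointwise inequality $\Re\bigl(e^{i\theta}f(z)h'(z)/f'(z)\bigr)>0$ on $\UH$ is exactly where the proof is incomplete, and you leave that step unproved. Moreover, it cannot be completed as stated: the inequality is strictly stronger than what you need (positivity of the integral $\Re\bigl(e^{i\theta}(w_0-h(z))\bigr)$), and it is false in general. Indeed, writing via (A) and (B) $f/f'=z+d_A(z)$ and $1/(h'f)=z+d_B(z)$ with holomorphic $d_A,d_B\colon\UH\to D$, one has $e^{i\theta}fh'/f'=e^{i\theta}\bigl(f'(z)\,(z+d_B(z))\bigr)^{-1}$, so your claim amounts to $\arg\bigl(e^{-i\theta}f(z)\bigr)-\arg\bigl(z+d_A(z)\bigr)+\arg\bigl(z+d_B(z)\bigr)\in(-\pi/2,\pi/2)$. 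Every holomorphic $d_A\colon\UH\to D$ is admissible (take $f:=\exp\int(\zeta+d_A(\zeta))^{-1}\,\mathrm{d}\zeta$), and so is every $d_B$. Take $D=U(k)$, $d_B\equiv\tfrac{1-k}{1+k}$, and $d_A=\tfrac{1+kw}{1-kw}$ with $w(z)=\tfrac{1-(z-iT)}{1+(z-iT)}$, so that $d_A\approx K:=\tfrac{1+k}{1-k}$ at $z_0:=\epsilon+iT$ while $d_A\approx 1/K$ on the ray above $z_0$. Since $\tfrac{d}{dt}\arg\bigl(e^{-i\theta}f(\epsilon+it)\bigr)=\Re\tfrac{1}{\epsilon+it+d_A(\epsilon+it)}$ and the limit as $t\to+\infty$ is $\pi/2$ (Lemma~\ref{LM_f}(c)), one gets $\arg\bigl(e^{-i\theta}f(z_0)\bigr)=\pi/2-\bigl(\epsilon+1/K+o(1)\bigr)/T$, whereas $-\arg(z_0+d_A(z_0))+\arg(z_0+d_B(z_0))=\bigl(K-1/K-O(\epsilon)\bigr)/T+O(T^{-2})$; for $K>\sqrt2$ (i.e.\ $k>3-2\sqrt2$), $\epsilon$ small and $T$ large the sum exceeds $\pi/2$, so the integrand in your representation is negative at $z_0$. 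Thus $\Re(e^{i\theta}h)$ need not increase along the trajectories of $(\psi_\sigma)$, and the inclusion $h(\UH)\subset U$ — the real content of the lemma — is not established by your argument.

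For comparison, the paper proves this inclusion globally rather than along trajectories: after rotating so that $\theta=\pi$, it shows that $h$ has finite limits on $i\Real$ and at $\infty$ (your bound $|h'(z)|=O(|z|^{-2})$ is the paper's \eqref{EQ_zh}), and then deduces $\Re h(z)>\Re h(\infty)$ from the fact that (B) combined with Lemma~\ref{LM_f}(b) forces $h'(\UH)\subset\Complex\setminus[0,+\infty)$, so a minimum of $\Re h$ on a closed half-plane $\{\Re z\ge x\}$ attained at a finite boundary point would give $h'>0$ there — no pointwise comparison of $h'$ with the generator $f/f'$ is needed. Your second half, $U\subset\Omega$, is essentially sound and is a reasonable variant of the paper's Rouch\'e argument: the representation $w_0-h=gI$ with $I$ bounded is correct (note the sign: $(h(z)-q)/g(z)=-I(z)+(w_0-q)f(z)$), and the sweeping/intermediate-value argument at large $|f|$ can be made rigorous using Lemma~\ref{LM_f}(b). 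But as it stands the lemma is only half proved, and the missing half would need the paper's (or some other global) argument rather than the pointwise estimate you propose.
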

\begin{proof}
First of all, as we mentioned in the proof of Lemma~\ref{LM_f}, $f$ does not vanish in~$\UH$. Hence $h_t$'s are holomorphic there.

Considering $-e^{-i\theta} f$ and $-e^{i\theta} h$ instead of $f$ and $h$, respectively, we may assume that~${\theta=\pi}$. Then by assertion~(b) of Lemma~\ref{LM_f}, $\Re f<0$. Therefore, $h_t(\UH)\subset u+\UH$ unless $u:=\inf_{z\in\UH}\Re h(z)=-\infty$.

To see that $u$ is finite, we will show that $h$ has finite limits everywhere on~$\partial\UH$. Consider first a \emph{finite} point~$z_0\in\partial\UH$. Combining condition~(B) with assertion~(e) of Lemma~\ref{LM_f} applied to the bounded set~$E:=\{z\in\UH\colon|z-z_0|<1\}$, we conclude that $h'$ is bounded in~$E$. Hence~$h$ has a finite limit at~$z_0$.

Similarly, from condition~(B) and assertion~(c) of Lemma~\ref{LM_f} it follows easily that
\begin{equation}\label{EQ_zh}
z^2h'(z)\to1/f'(\infty)\quad\text{as~~~~$\UH\ni z\to\infty$},
\end{equation}
which implies that also at~$z_0:=\infty$, $h$ has finite limit, which we will denote by~$h(\infty)$.

Observe now that $u =\Re h(\infty)$. Indeed, recall that we have assumed $\theta=\pi$. Assertion~(b) of Lemma~\ref{LM_f} combined with condition~(B) implies that $h'(\UH)\subset\Complex\setminus [0,+\infty)$. Fix any $x>0$. If  $\Re h|_{\{z\colon \Re z\ge x\}}$ attains minimum at some finite point $z_*$, then $\Re z_*=x$ and $h'(z_*)>0$, which contradicts the previous conclusion. Since $x>0$ can be chosen arbitrarily small, this means that $\Re h(z)>\Re h(\infty)$ for all~$z\in\UH$.

Thus we have proved that $\Omega\subset h(\infty)+\UH$.  It remains to show that $\Omega\supset h(\infty)+\UH$. To this end we fix an arbitrary $w$ with $\Re w>u$ and prove that the equation $h_t(z)=w$ has at least one solution in~$\UH$ provided $t$ is large enough. Assuming that $t>0$, rewrite the equation $h_t(z)=w$ in the following form
\begin{equation}\label{EQ_equation-for-Rouche}
\frac{f(z)}{e^t-1}-\frac{1}{h(\infty)-w}+Q(z)=0,~~~\text{where~~}Q(z):=\frac{h(z)-h(\infty)} {(h(\infty)-w)(h(z)-w)}.
\end{equation}
By assertion~(b) of Lemma~\ref{LM_f}, the equation $P(z):=f(z)/(e^t-1)-1/(h(\infty)-w)=0$
 has a solution $z(t)\in\UH$ for all $t>0$ large enough, say $t>t_0(w)$. By assertions (c) and~(e) of the same lemma,
\begin{equation}\label{EQ_z(t)}
e^{-t}z(t)\to\omega\quad\text{as~~~$~~~t\to+\infty$}\qquad\text{for some~~$\omega\in\UH$.}
\end{equation}
We will apply Rouch\'e's Theorem to show that~\eqref{EQ_equation-for-Rouche} has also a solution in~$\UH$.
Consider the disk $W(t):=\{z\colon |z-z(t)|<\Re z(t)/2\}\subset\subset\UH$. Since $\inf\{|z|\colon z\in W\}\to+\infty$ as $t\to+\infty$ and $\Re h(z)\to u>\Re w$ as $\UH\ni z\to\infty$, the function $Q$ is holomorphic in~$W(t)$ for all $t>0$ large enough, say $t>t_1(w)\ge t_0(w)$.
Recall that $f$ is univalent in~$\UH$ by Theorem~\ref{TH_sufficient-qc2}. Therefore, by the Koebe one-quarter theorem,
\begin{equation*}\label{EQ_est-for-P}
|P(z)|>\frac{|f'(z(t))|}{4e^t}\cdot\frac{\Re z(t)}2\quad\text{for all}~~~z\in\partial W(t)
\end{equation*}
and all~$t>t_0(w)$, while from~\eqref{EQ_zh} it follows that
$$
-zf'(\infty)\big(h(z)-h(\infty)\big)=\int\limits_1^{+\infty}z^2f'(\infty)h'(\tau z)d\tau\to\int\limits_1^{+\infty}\frac{d\tau}{\tau^2}=\pi/2\quad\text{ as }~~~\UH\ni z\to\infty.
$$
Therefore, with the help of~\eqref{EQ_z(t)} and assertion~(d) of Lemma~\ref{LM_f}, we conclude that $|P(z)|>|Q(z)|$ for all $z\in\partial W(t)$ provided $t>t_1(w)$ is large enough. Then by Rouch\'e's Theorem, for all such~$t$'s,~\eqref{EQ_equation-for-Rouche} has a solution in~$W(t)$. This completes the proof.
\end{proof}

\begin{lemma}\label{LM_h_t-univalence}
Under the conditions of Lemma~\ref{LM_h_t-image-union},
for any $R > 0$, there exists $\tau>0$ and $a>0$ such that for all $t>\tau$ and all $z\in\UH(a)$, $h_{t}$ is univalent in the hyperbolic disk of radius $R$ centered at $z$.
\end{lemma}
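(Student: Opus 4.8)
The plan is to prove a bit more than stated: there is $a=a(R)>0$ such that $h_t$ is injective on the hyperbolic disk $B=B(z_0,R)\subset\UH$ (of hyperbolic radius~$R$, center~$z_0$) for \emph{every} $t\ge0$ and every $z_0$ with $\Re z_0>a$, so that any $\tau>0$ works. I would \emph{not} try the naive route of controlling the variation of $\arg h_t'$ on~$B$: this works when $R$ is small, but for large~$R$ the quantity $\arg z$, and hence $\arg h_t'$, can oscillate by almost $\pi$, resp.\ $2\pi$, over~$B$. Instead I would note that on the far-out disks $h_t$ is a small perturbation of the univalent Möbius map $z\mapsto-(e^t/C)/z$, where $C:=f'(\infty)$, and prove injectivity directly by integrating $h_t'$.

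The first step is to rewrite the derivative. By conditions (A) and (B) of Theorem~\ref{TH_QCvia-derivative}, write $\bigl(h'(z)f(z)\bigr)^{-1}=z+d(z)$ and $f(z)/f'(z)=z+e(z)$ with $d(z),e(z)\in D\subset\UH$ and $|d(z)|,|e(z)|\le M:=\sup_{w\in D}|w|$; note $f$ does not vanish in~$\UH$ (by (A)), so $h_t=h-(e^t-1)/f$ is holomorphic there. Using $\tfrac{z}{z+d}=1-\tfrac{d}{z+d}$ one gets
\[
h_t'(z)=\frac{1}{f(z)}\left(\frac{1}{z+d(z)}+\frac{e^t-1}{z+e(z)}\right)=\frac{e^t}{z\,f(z)}\,A(z,t),\qquad A(z,t)=1-\frac{e^{-t}\,d(z)}{z+d(z)}-\frac{(1-e^{-t})\,e(z)}{z+e(z)},
\]
and since $\Re d(z),\Re e(z)>0$ (as $D\subset\UH$), $|A(z,t)-1|\le\frac{|d(z)|}{|z+d(z)|}+\frac{|e(z)|}{|z+e(z)|}\le\frac{2M}{\Re z}$ for all $t\ge0$ --- the point being that this bound is uniform in~$t$. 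Next, $f(z)/z\to C=f'(\infty)\ne0$ as $\UH\ni z\to\infty$ by Lemma~\ref{LM_f}(c),(d), so $z\,f(z)=Cz^2\bigl(1+\mu(z)\bigr)$ with $\mu(z)\to0$; combining these, $h_t'(z)=\dfrac{e^t}{Cz^2}\bigl(1+\nu(z,t)\bigr)$ with $|\nu(z,t)|\le\dfrac{2M/\Re z+|\mu(z)|}{1-|\mu(z)|}$.

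The second step uses that $B=B(z_0,R)$ equals $(\Re z_0)\,B(1,R)+i\,\Im z_0$, a Euclidean disk; hence $B$ is convex, $\Re z\ge c_1(R)\,\Re z_0$ on~$B$, and the ratio of the largest to the smallest modulus on~$B$ is bounded by a constant $c_2(R)$ depending only on~$R$. Consequently, if $\Re z_0>a$ then $|z|\ge\Re z>c_1(R)\,a$ for every $z\in B$, and hence $|\nu(z,t)|\le\nu_a$ on~$B$ for all $t\ge0$ and all $z_0\in\UH(a)$, where $\nu_a\to0$ as $a\to+\infty$. Now fix $z_1\ne z_2$ in~$B$ and integrate along $[z_2,z_1]\subset B\subset\UH$:
\[
h_t(z_1)-h_t(z_2)=\frac{e^t}{C}\left(\frac1{z_2}-\frac1{z_1}+\int_{z_2}^{z_1}\frac{\nu(z,t)}{z^2}\,dz\right),
\]
so $|h_t(z_1)-h_t(z_2)|\ge\frac{e^t}{|C|}|z_1-z_2|\bigl(\tfrac1{|z_1z_2|}-\tfrac{\nu_a}{(\min_B|z|)^2}\bigr)\ge\frac{e^t|z_1-z_2|}{|C|(\max_B|z|)^2}\bigl(1-c_2(R)^2\,\nu_a\bigr)$. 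Choosing $a=a(R)$ with $c_2(R)^2\nu_a<1$ makes the last expression strictly positive, proving that $h_t$ is injective on~$B$ for every $t\ge0$ and every $z_0\in\UH(a)$, which gives the lemma.

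The only delicate point is this uniformity: that $|A(z,t)-1|\le 2M/\Re z$ holds with no dependence on~$t$ (which is why the identity above, isolating the coefficients $e^{-t}$ and $1-e^{-t}$, is convenient), and that the error $\mu$ coming from $f(z)/z\to f'(\infty)$ is small \emph{simultaneously} on all disks $B(z_0,R)$ with $\Re z_0$ large --- which is the case because those disks recede to~$\infty$ while their eccentricity stays bounded in terms of~$R$ alone. The remaining estimates are routine.
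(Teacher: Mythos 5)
Your proof is correct, and it takes a genuinely different route from the paper. The paper argues by contradiction and compactness: it renormalizes, setting $F_{z,t}(\zeta):=\big(h_t(\zeta\Re z+i\Im z)-h_t(z)\big)/\big(h_t'(z)\Re z\big)$, assumes failure along sequences $\Re z_n\to\infty$, $t_n\to\infty$, and shows (using $g:=1/f$, the univalence of $f$ from Theorem~\ref{TH_sufficient-qc2}, the limit $z^2h'(z)\to1/f'(\infty)$ and Lemma~\ref{LM_f}(c),(d)) that $F_{z_n,t_n}$ converges locally uniformly to the univalent M\"obius map $(\zeta-1)(\xi+i\eta)/(\xi\zeta+i\eta)$, whence univalence on the fixed hyperbolic disk for large $n$, a contradiction. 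You instead make the same asymptotic picture quantitative: the algebraic identity $h_t'=\frac{1}{f}\big(\frac1{z+d}+\frac{e^t-1}{z+e}\big)$ coming directly from (A) and (B), together with the crucial observation that the error term in $h_t'(z)=\frac{e^t}{Cz^2}(1+\nu(z,t))$ is bounded by $2M/\Re z$ plus the $f(z)/z\to f'(\infty)$ error \emph{uniformly in $t$}, lets you prove injectivity by integrating $h_t'$ over the (convex, Euclidean) hyperbolic disk and comparing with the M\"obius map $z\mapsto -e^t/(Cz)$; the bounded eccentricity of the disks $B(z_0,R)$ (constants $c_1(R),c_2(R)$ depending only on $R$) makes the perturbation argument close. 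What your approach buys is explicit constants and a stronger conclusion --- univalence on all far-out disks for \emph{every} $t\ge0$, so $\tau$ plays no role --- and it avoids both the normal-families/contradiction machinery and the appeal to the univalence of $f$ and to stability of univalence under locally uniform convergence; the paper's softer argument, in turn, needs only along-sequence asymptotics rather than uniform quantitative bounds. All the individual estimates you use (non-vanishing of $f$, $|z+d|\ge\Re z$, the unrestricted limits in Lemma~\ref{LM_f}(c),(d), the ratio bound on moduli over a hyperbolic disk) are justified, so I see no gap.
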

\begin{proof}
First of all, note that from~\eqref{EQ_zh} and assertions~(c) and~(d) of Lemma~\ref{LM_f} it follows that there exists $a_0>0$ such that $z^2h'_t(z)\neq0$ for all $z\in\UH(a_0)$  and all~$t\ge0$.

Given $z\in\UH(a_0)$ and $t\ge0$, we define $$F_{z,t}(\zeta):=\frac{h_t\big(\zeta\,\Re z+i\,\Im z\big)-h_t(z)}{h_t'(z)\,\Re z},\quad\zeta\in\UH.$$

Since hyperbolic disks in~$\UH$ are invariant w.r.t. automorphisms of~$\UH$, it is sufficient to show that given a hyperbolic disk $D$ centered at~$1$, $F_{z,t}$ is univalent on~$D$ provided $\Re z$ and $t>0$ are large enough. Suppose that this statement does not hold. Then there exist sequences $(z_n)\subset\UH(a_0)$ and $(t_n)\subset[0,+\infty)$ with $\lim_{n\to+\infty}\Re z_n=+\infty$ and $\lim_{n\to+\infty}t_n=+\infty$ such that for any~$n\in\Natural$ the map $F_{z_n,t_n}$ is not injective on~$D$.

Fix any closed hyperbolic disk~$K$ containing $D$ in the interior. Denote $g:=1/f$. Recall that $f$, and hence $g$, are univalent in~$\UH$ by Theorem~\ref{TH_sufficient-qc2}. Therefore, the functions
\begin{equation*}
U_n(\zeta):=\frac{h\big(\zeta\,\Re z_n+i\,\Im z_n\big)-h(z_n)}{g\big(\zeta\,\Re z_n+i\,\Im z_n\big)-g(z_n)}
\end{equation*}
are all holomorphic in~$\UH$. By assertions (c) and~(d) of Lemma~\ref{LM_f}, $z^2g'(z)\to-1/f'(\infty)$ as $\UH\ni z\to\infty$. Together with~\eqref{EQ_zh}, this implies that
\begin{equation}\label{EQ_U_n-limit}
U_n(\zeta)\to -1
\qquad\text{uniformly on~$K$ as $n\to+\infty$.}
\end{equation}
Taking into account that $h'(z_n)/g'(z_n)=U_n(1)$, from~\eqref{EQ_U_n-limit} we get
\begin{multline}\label{EQ_remove-h}
F_{z_n,t_n}(\zeta)\cdot\left(\frac{f\big(\zeta\,\Re z_n+i\,\Im z_n\big)-f(z_n)}{f'(z_n)\,\Re z_n}\,\frac{f(z_n)}{f\big(\zeta\,\Re z_n+i\,\Im z_n\big)}\right)^{-1}=\\=\quad F_{z_n,t_n}(\zeta)\cdot\left(\frac{g\big(\zeta\,\Re z_n+i\,\Im z_n\big)-g(z_n)}{g'(z_n)\,\Re z_n}\right)^{-1}=\quad\frac{U_n(\zeta)}{U_n(1)}\quad\to\quad1\qquad\hbox{~}
\end{multline}
uniformly on~$K$ as $n\to+\infty$.

Passing if necessary to a subsequence, we may assume that $\Im z_n/|z_n|\to\eta\in[-1,1]$ as $n\to+\infty$. Then, using again assertions (c) and~(d) of Lemma~\ref{LM_f}, from~\eqref{EQ_remove-h} we deduce that
$$
F_{z_n,t_n}(\zeta)\to F_0(\zeta):=(\zeta-1)\frac{\xi+i\eta}{\xi\zeta+i\eta}, \quad \xi:=\sqrt{1-\eta^2},
$$
uniformly on~$K$ as $n\to+\infty$. Since $F_0$ is univalent in~$\UH$, $F_{z_n,\tau_n}$ is univalent in~$D$ for all~$n$ large enough. With this conclusion contradicting our original assumption, the lemma is now proved.
\end{proof}

\subsection{Proof of Theorem~\ref{TH_QCvia-derivative}}
Note again that by~(A), $f$ does not vanish in~$\UH$. Therefore, the functions $h_t:=h-(e^t-1)/f$, $t\ge0$, are holomorphic in~$\UH$.
It is easy to check that $h_t(z)$ satisfies the PDE $\partial h_t(z) /\partial t=-h_t'(z)/q(z,t)$, where $q(z,t):=e^{-t}A(z)+(1-e^{-t})B(z)$, $A(z):=h'(z)f(z)$, and $B(z):=f'(z)/f(z)$ for all $z\in\UH$ and all~$t\ge0$. By the hypothesis of the theorem, $A(z)$ and $B(z)$ belong to~$D_z:=\{w\colon\frac1w-z\in D\}\subset\UH$ for all~$z\in\UH$. Since for any~$z\in\UH$, $D_z$ is a convex set, it follows that $q(z,t)\in D_z$ for all~$z\in\UH$ and all~$t\ge0$. In particular, $p:=1/q$ is a Herglotz function in~$\UH$.

Therefore, applying Corollary~\ref{C_qc-ext} with suitable constant functions~$\alpha$ and~$\beta$, we see that the range-normalized Loewner chains associated with the Herglotz function~$p$ can be extended $k$-quasiconformally to~$\ComplexE$. Hence it remains to show that $(h_t)$ \emph{is} a range-normalized Loewner chain. The first part, i.e. the fact that $h_t$ is a Loewner chain, follows from Lemma~\ref{LM_h_t-univalence} and Proposition~\ref{PR_LoewChain}. The second part,  i.e. the fact that this Loewner chain is range-normalized, is the statement of Lemma~\ref{LM_h_t-image-union}. This completes the proof.\proofbox

    %
    %

\vskip2ex\noindent {\bf Acknowledgement.}
The authors are grateful to Professors Dmitri Prokhorov and Toshiyuki Sugawa for interesting discussions concerning Corollary~\ref{CR_QCvia-derivative}.

\bibliographystyle{amsalpha}

\end{document}